\colorlet{darkblue}{blue!50!black}
\colorlet{darkblue}{blue!50!black}
\newtheorem{theorem}{Theorem}[section]
\newtheorem{lemma}[theorem]{Lemma}
\newtheorem{proposition}[theorem]{Proposition}
\newtheorem{corollary}[theorem]{Corollary}
\newtheorem{definition}[theorem]{Definition}
\newtheorem{remark}[theorem]{Remark}
\newtheorem{hypothesis}[theorem]{Hypothesis}
\newtheorem{condition}[theorem]{Condition}
\let\emptyset\varnothing
\let\originalleft\left
\let\originalright\right
\renewcommand{\left}{\mathopen{}\mathclose\bgroup\originalleft}
\renewcommand{\right}{\aftergroup\egroup\originalright}
\theoremstyle{definition}
\def\wi{\widehat}
\def\vi{\widetilde}
\def\d{\mathrm{d}}
\def\I{\mathrm{I}}
\def\B{\mathrm{B}}
\def\A{\mathrm{A}}
\def\m{\mathrm{m}}
\def\W{\mathrm{W}}
\def\R{\mathbb{R}}
\def\E{\mathbb{E}}
\def\Q{\mathrm{Q}}
\def\H{\mathrm{H}}
\def\e{\epsilon}
\def\2{\mathcal{E}}
\def\L{\mathrm{L}}
\def\C{\mathrm{C}}
\def\F{\mathrm{F}}
\def\P{\mathbb{P}}
\def\N{\mathbb{N}}
\def\PP{\mathscr{P}}
\def\vp{\varpi}
\def\M{\mathrm{M}}
\def\UU{\mathfrak{U}}
\def\SS{\mathbb{S}}
\def\p{\mathfrak{p}}
\def\c{\mathfrak{c}}
\def\EE{\mathcal{E}}
\newcommand{\Addresses}{{
		\footnote{
			\noindent \textsuperscript{1,3}Department of Mathematics, Indian Institute of Technology Roorkee-IIT Roorkee,
			Haridwar Highway, Roorkee, Uttarakhand 247667, INDIA.
			\\
			\textsuperscript{2}Theoretical Statistics and Mathematics Unit, Indian Statistical Institute Banglore Centre-ISI Banglore, 8th Mile Mysore Road, Bangalore, Karnataka 560059, India. \par\nopagebreak
			\noindent  \textit{e-mail:} \texttt{Manil T. Mohan: maniltmohan@ma.iitr.ac.in, maniltmohan@gmail.com.}
			
			\textit{e-mail:} \texttt{Ankit Kumar: akumar14@mt.iitr.ac.in.}
			
			\textit{e-mail:} \texttt{Vivek Kumar: vivekkumar\_ra@isibang.ac.in, vivekmsc118@gmail.com.}
			
			\noindent \textsuperscript{*}Corresponding author.
			
			\textit{Keywords:} Stochastic Burgers-Huxley equations, mild solution, large deviations.
			
			Mathematics Subject Classification (2020): Primary 60H15, 60F10; Secondary 37L55, 60H20.

}}}
\begin{document}	
	
	\title[Well-posedness and ULDP for SGBH equations]{Well-posedness and uniform large deviation principle for stochastic Burgers-Huxley equation perturbed by a multiplicative  noise
		\Addresses}

	\author[A. Kumar, V. Kumar and M. T. Mohan]
	{Ankit Kumar\textsuperscript{1}, Vivek Kumar\textsuperscript{2} and Manil T. Mohan\textsuperscript{3*}}

	\maketitle

	\begin{abstract}
		In this work, we focus on the global solvability and  uniform large deviations for the solutions of stochastic generalized Burgers-Huxley (SGBH) equation perturbed by a small multiplicative white in time and colored in space noise. The SGBH equation has the nonlinearity of polynomial order and noise considered in this work  is infinite dimensional with a coefficient having linear growth.  First, we prove the existence of a \textsl{unique local mild solution}  in the sense of Walsh  to SGBH equation with the help of a truncation argument and contraction mapping principle. Then the global solvability results are established  by using uniform bounds of the local mild solution, stopping time arguments, tightness properties and Skorokhod's representation theorem.  
		By using the uniform Laplace principle, we obtain the   \textsl{large deviation principle} (LDP) for the law of solutions to SGBH equation by using  variational representation methods. Further, we derive the \textsl{uniform large deviation principle} (ULDP) for the law of solutions in two different topologies by using a weak convergence method.  First, in the $\mathrm{C}([0, T ];\mathrm{L}^p([0,1])) $ topology where the uniformity is over $\mathrm{L}^p([0,1])$-bounded sets of initial conditions, and secondly in the $\mathrm{C} ([0, T ] \times[0,1])$ topology with uniformity being over bounded subsets in the $\mathrm{C}([0,1])$-norm.
		Finally, we consider SGBH equation perturbed by a space-time white noise with bounded noise coefficient   and  establish the ULDP for the laws of  solutions. The results obtained in this work hold true for stochastic Burgers' as well as Burgers-Huxley equations. 
	\end{abstract}
	
	\section{Introduction}\label{Intro}
\textsl{Burgers' equation} (\cite{BJM}) is one of the celebrated fundamental partial differential equations and is   the simplest model for analyzing combined effect of nonlinear convection and diffusion. Burgers' equation appears   in various areas of applied mathematics  such as fluid mechanics, traffic flow, nonlinear acoustics, gas dynamics,   etc.  	The \textsl{Burgers-Huxley equation} is a mathematical model that describes the dynamics of a non-linear, one-dimensional wave system that is subject to both deterministic and stochastic influences. It is a combination of the Burgers equation \cite{BJM}, which describes the nonlinear behavior of a fluid flow, and the Huxley equation \cite{XW}, which describes the dynamics of an action potential in a nerve or muscle cell. The Burgers-Huxley equation is a good example of convection-diffusion-reaction equations.   In the stochastic version of the equation, a random noise is added to the system to account for the influence of external factors such as thermal fluctuations or other sources of randomness.

	\subsection{The model}
	In this work, we consider the following generalized version of Burgers-Huxley equation driven by a random forcing indexed by $\e\in(0,1]$:
	\begin{align}\label{1.1}
		\frac{\partial u_\e(t,x)}{\partial t}&=\nu\frac{\partial^2 u_\e(t,x)}{\partial x^2}- \alpha u_\e^\delta(t,x) \frac{\partial u_\e(t,x)}{\partial x}
		+\beta u_\e(t,x)(1-u_\e^\delta(t,x))(u_\e^\delta(t,x)-\gamma)\nonumber\\
		&\quad+ \sqrt{\epsilon}g(t, x, u_\e(t,x))\mathcal{W}(t,x),~~~~~t>0,~ x\in (0,1),
	\end{align}where $\nu>0$ and $\alpha>0$ are the viscosity and convection coefficients, respectively, $\beta>0,\ \delta \geq1$, $\gamma\geq 1$ are parameters and $\e\in(0,1]$ is the intensity of the noise, with the Dirichlet boundary condition and initial data 
	\begin{align}\label{1.2}
		u_\e(t,0)=u_\e(t,1)=0,\ \ t\geq0, \ \ \text{ and } \ \ u_\e(0,x)=u^0(x),
	\end{align}respectively. The noise coefficient $g(\cdot,\cdot,\cdot)$ has linear growth and it satisfies a Lipschitz condition  in the third variable (see Hypothesis \ref{hyp1} below).

In the deterministic case, for $\beta=0$ and $\delta=1$, the equation \eqref{1.1} represents the \textsl{classical viscous Burgers equation} and for $\delta=1$, it is called the \textsl{Burgers-Huxley equation}. The equation is commonly used in the study of a wide range of physical and biological systems, including fluid dynamics, neural networks, and pattern formation. For $\beta=0$ and $\alpha=1$, the equation gives the following generalized Burgers equations
\begin{align}\label{Burgers}
	\frac{\partial u_\e(t,x)}{\partial t}=\nu\frac{\partial^2 u_\e(t,x)}{\partial x^2}-\alpha u_\e^\delta(t,x)\frac{\partial u_\e(t,x)}{\partial x}+\sqrt{\e}g(t,x,u_\e(t,x))\mathcal{W}(t,x).
\end{align}
which is very famous as a toy  model to study the turbulence in fluids and the model has been explored a lot in the literature, for instance see \cite{DDR, GDDG, IG, IGDN, VMG, LZN}, etc. and references therein.  For $\alpha=0$, the equation \eqref{1.1} gives rise to the following equation:
\begin{align}\label{Huxley}\nonumber
	\frac{\partial u_\e(t,x)}{\partial t}&=\nu\frac{\partial^2 u_\e(t,x)}{\partial x^2}+\beta u_\e(t,x)+\beta u_\e(t,x)(t-u_\e^\delta(t,x))(u_\e^\delta(t,x)-\gamma)\\&\quad +\sqrt{\e}g(t,x,u_\e(t,x))\mathcal{W}(t,x), \ \ \text{ for } \ \ (t,x)\in(0,T)\times(0,1).
\end{align}
The equation \eqref{Huxley} is known as the \textsl{stochastic Huxley equation} and it models the propagation of nerve pulse in nerve fibers and wall motion in liquid crystals (see \cite{XW}).

\subsection{A short note on the noise $\mathcal{W}(\cdot,\cdot)$}\label{SN}
Let us now characterize the noise $\mathcal{W}(\cdot,\cdot)$ 
 appearing in  \eqref{1.1} on the probability space $(\Omega,\mathscr{F},\{\mathscr{F}_t\}_{t\geq0},\P)$ (see \cite{SVLBLR}). Consider a sequence  $\{\xi_j\}_{j\geq 1 }$ of independent standard Gaussian random variables on this basis (see  Lemma II.2.3, \cite{NVK}). If $\{\m_j(t)\}_{j\in\N}$ is an  orthonormal basis in $\mathrm{L}^2([0,T]), $ then
	
	\[\beta(t):=\sum_{j\in\N} \left(\int_{0}^{t}\m_j(s)ds\right) \xi_j\]
	is a standard Brownian motion with covariance $\E[\beta(t)\beta(s)]=(t\wedge s)$. 
	Consider a sequence $\{\beta_j(t)\}_{j\in\N}$ of independent standard Brownian motions and  an orthonormal basis $\{\varphi_j(x)\}_{j\in \N}$ in the space $\L^2([0,1])$. Let us define
	\[h_j(x):=\int_{0}^{x} \varphi_j(y)dy.\] 
	Then the process
	$$\W(t,x):=\sum_{j\in\N}h_j(x)\beta_j(t)$$
	is Gaussian  with the mean zero and covariance $\E[\W(t,x)\W(s,y)]=(t\wedge s)(x\wedge y).$ The process $\W$ is known as the \textsl{Brownian sheet.} The formal term-by-term differentiation of the series in $\beta(t)$ suggests a representation
	\begin{align*}
		\dot{\beta}(t)=\sum_{k\in\N}\m_k(t)\xi_k,
	\end{align*}
	so that 
	\begin{align}
		\frac{\partial^2\W}{\partial t\partial x}(t,x)=\dot{\W}(t,x)=\sum_{j\in\N}\varphi_j(x)\dot{\beta}_j(t), 
	\end{align}
	and we call the process $\dot{\W},$ the \textsl{(Gaussian) space-time white noise.} Even though the series  diverges, it defines a random generalized function on $\L^2([0,T]\times[0,1])$:
	\begin{align*}
		\dot{\W}(f)=\sum_{j\in\N}\int_0^T\left(\int_0^1f(t,x)\varphi_j(x)\d x\right)\d \beta_j(t)=\int_0^T\int_0^1f(t,x)\W(\d t,\d x). 
	\end{align*}
	
	To construct a noise, that is, white in time and colored in space, take a sequence of non-negative numbers $\{q_j\}_{j\in\N}$ and define 
	\begin{align}\label{15}
			\frac{\partial^2\W^{\Q}}{\partial t\partial x}(t,x)=\dot{\W}^{\Q}(t,x)=\mathcal{W}(t,x)=\sum_{j\in\N}q_j\varphi_j(x)\dot{\beta}_j(t),
	\end{align}	
and we call the process $\dot{\W}^\Q,$ the \textsl{(Gaussian) white in time and colored in space noise.} Whenever we consider $\dot{\W}^\Q$ in the work, we assume that \begin{align}\label{181}\sum\limits_{j\in\N}q_j^2<\infty.\end{align} Let  the family  $\{\varphi_j(x)\}_{j\in\N}$ be the orthonormal basis of $\L^2([0,1])$ consisting of the eigenfunctions of the operator $-\frac{\partial^2}{\partial x^2}$ corresponding to the eigenvalues $\{\lambda_j\}_{j\in\N}$. In fact, for $j=1,2,\ldots$
\begin{equation*}
\lambda_j=j^2 \pi^2, \ 	\varphi_j(x)=\sqrt{2}\sin(j\pi x).\
\end{equation*} Then an example of a family $\{q_j\}_{j\in\N}$ satisfying \eqref{181}  is $\{\lambda_j^{-\eta}\}_{j\in\N}$ for $\eta>\frac{1}{4}$, since $\sum\limits_{j\in\N}\frac{1}{j^{4\eta}}<\infty$ provided $\eta>\frac{1}{4}$.

	\subsection{Assumptions and solution concept} We first discuss the function spaces used in this work and then provide the assumptions satisfied by $g(\cdot,\cdot,\cdot)$.

	\subsubsection{Function spaces} Let us denote the space of all infinite times differentiable functions having compact support in $[0,1]$ by $\C_0^\infty([0,1])$. The Lebesgue spaces are denoted by $\L^p([0,1])$, for $p\geq1$, and the norm in the space $\L^p([0,1])$ is denoted by $\|\cdot\|_{\L^p}$. For $p=2$, the inner product in the space $\L^2([0,1])$ is represented by $(\cdot,\cdot)$. Let us denote the Hilbertian Sobolev spaces by $\H^k([0,1])$, for $k\in\N$ and $\H_0^1([0,1])$ represent the closure of $\C_0^\infty([0,1])$ in $\H^1$-norm.  Since we are working in a bounded domain, by the Poincar\'e inequality ($\sqrt{\lambda_1}\|u\|_{\L^2}^2\leq \|\partial_xu\|_{\L^2}^2$), we have the norm $(\|\cdot\|_{\L^2}^2+\|\partial_x\cdot\|_{\L^2}^2)^\frac{1}{2}$ is equivalent to the seminorm $\|\partial_x\cdot\|_{\L^2}$ and hence $\|\partial_x\cdot\|_{\L^2}$ defines a norm in $\H_0^1([0,1])$. Moreover, we obtain the Gelfand triplet $\H_0^1([0,1])\hookrightarrow \L^2([0,1])\hookrightarrow\H^{-1}([0,1])$, where $\H^{-1}([0,1])$ stands for the dual of  $\H_0^1([0,1])$. For the bounded domain $[0,1]$, one has the compact embedding $\H_0^1([0,1])\hookrightarrow\L^2([0,1])$. The duality paring between $\H_0^1([0,1])$ and its dual $\H^{-1}([0,1])$ as well as between $\L^p([0,1])$ and its dual $\L^{\frac{p}{p-1}}([0,1])$ is denoted by $\langle\cdot,\cdot\rangle$. In one dimension, the embedding $\H^s([0,1])\hookrightarrow\L^p([0,1])$ is compact for any $s>\frac{1}{2}-\frac{1}{p}$, for $p>2$. 
	\subsubsection{Hypotheses  and mild solution} Let us introduce the linear growth and Lipschitz conditions on the noise coefficient.
	\begin{hypothesis}\label{hyp1}
		The function $g:[0,T]\times[0,1]\times\R\to\R$ is a measurable function, satisfying the following conditions:
		\begin{align}\label{2.1}
			|g(t,x,r)|\leq K(1+|r|),\ \ \text{ and } \ \ |g(t,x,r)-g(t,x,s)|\leq L|r-s|,			
		\end{align}for all $t\in[0,T],\ x\in[0,1]$ and $r,s\in\mathbb{R}$, where $K,L$ are some positive constants.
	\end{hypothesis}

	The \textsl{fundamental solution} $G(t,x,y)$ of the heat equation in the interval $[0,1]$ with the Dirichlet boundary conditions is given by 
	\begin{align}\label{19}
		G(t,x,y)=\frac{1}{\sqrt{4\pi t}}\sum_{m=-\infty}^{\infty}\bigg[e^{-\frac{(y-x-2m)^2}{4t}}-e^{-\frac{(y+x-2m)^2}{4t}}\bigg],
	\end{align}
for all $t\in(0,T]$ and $x,y\in[0,1]$. 
	
	Next, we provide the definition of \textsl{mild solution} in the sense of Walsh (\cite{JBW}) to the problem \eqref{1.1}-\eqref{1.2}. 
	\begin{definition}[Mild solution]\label{def1}
		An $\L^p([0,1])$-valued and $\mathscr{F}_t$-adapted stochastic process $u_\e:[0,\infty)\times[0,1]\times\Omega\to\R$ with $\P$-a.s. continuous trajectories on $t\in[0,T]$, is called a \textsf{mild solution} to the system \eqref{1.1}-\eqref{1.2}, if for any $T>0$, $u_\e(t):=u_\e(t,\cdot,\cdot)$ satisfies the following integral equation:
		\begin{align}\label{2.2}\nonumber
			u_\e(t)&=\int_0^1G(t,x,y)u^0(y)\d y+\frac{\alpha}{\delta+1}\int_0^t\int_0^1\frac{\partial G}{\partial y}(t-s,x,y)\p(u_\e(s,y))\d y\d s\\&\nonumber\quad +\beta\int_0^t\int_0^1G(t-s,x,y)\c(u_\e(s,y))\d y\d s\\&\quad+ \sqrt{\e}
			\int_0^t\int_0^1G(t-s,x,y)g(s,y,u_\e(s,y))\W^\Q(\d s,\d y), \ \ \P\text{-a.s.},
		\end{align}for all $t\in[0,T]$, where $\p(u)=u^{\delta+1}, \ \c(u)=u(1-u^\delta)(u^\delta-\gamma)$ and $G(\cdot,\cdot,\cdot)$ is the fundamental solution of the heat equation in $[0,1]$ with the Dirichlet boundary conditions.
	\end{definition}
	If $u_{\e}(\cdot)$ is a mild solution  to the system \eqref{1.1}-\eqref{1.2} in the sense of Definition \ref{def1}, then by using Proposition 3.7,  \cite{IGCR}, one can show that $u_{\e}(\cdot)$ has a continuous modification:
	\begin{align}\label{WF}\nonumber
		\int_0^1u_\e(t,y)\phi(y)\d y& =\int_0^1u^0(y)\phi(y)\d y +\nu\int_0^t\int_0^1u_\e(s,y)\phi''(y)\d y\d s\\&\nonumber\quad+\frac{\alpha}{\delta+1}\int_0^t\int_0^1 \p(u_\e(s,y))\phi'(y)\d y\d s+\beta \int_0^t\int_0^1 \c(u_\e(s,y))\phi(y)\d y\d s\\&\quad +\sqrt{\e}\int_0^t\int_0^1 g(s,y,u_\e(s,y))\phi(y)\W^\Q(\d s,\d y), 
	\end{align}
	for every test function $\phi\in\C^2([0,1]), \ \phi(x)=0, \ x\in \{0,1\}$ and all $t\in[0,T]$. The existence of a unique mild solution to the system \eqref{1.1}-\eqref{1.2} will be established in Section \ref{Sec3}.

	\subsection{Literature Survey}
	The SGBH equation \eqref{1.1} covers both the models given by equations \eqref{Burgers} and \eqref{Huxley}. The global solvability results of deterministic generalized Burgers-Huxley (GBH) equation and numerical studies have been carried out in \cite{VJE, AKMT,MTMAK}, etc. and references therein.  For the global solvability results of the stochastic counterpart, one can see  \cite{AKMTM3,AKMTM2,MTMSBH,MTMSGBH}, etc. In the works \cite{AKMTM3,AKMTM2,MTMSBH}, the noise  coefficient $g(\cdot,\cdot,\cdot)$ is taken as a bounded function, while in \cite{MTMSGBH} Burgers-Huxley equation is considered with the $\mathrm{Q}$-Wiener process
	having noise coefficient as a Hilbert-Schmidt operator which satisfy a linear growth and Lipschitz  conditions.

	The \textsl{large deviation theory}  is one of the most popular research topics in probability theory and it explores the limiting behavior of the probabilities of rare events in the terms of a rate function. Though the topic of large deviations  is much older,  the modern theories  were mainly developed by  Donsker and Vardhan \cite{DV85, V84}, and Freidlin and Wentzell \cite{MIFADW}.  In this work, we are following the \textsl{uniform large deviation principle} (ULDP) introduced by Freidlin and Wentzell which is known as the \textsl{Freidlin-Wentzell uniform large deviation principle} (FWULDP) in the honor of their names. It is one of the most popular definitions of ULDP. Specifically, the work \cite{MIFADW} describes the asymptotic behavior of probability of large deviations of the law of the solution to a family of small noise finite-dimensional SDEs, away from its law of large number dynamics. In order to prove ULDP, one can use different types of \textsl{uniform Lapalce principles} (ULP)  with additional assumptions available in the literature. 
		In the work \cite{ADPDVM}, the authors proved FWULDP for diverse families of infinite-dimensional stochastic dynamical systems by using certain variational representations.  They developed the weak convergence method for proving the ULDP by using the ULP. In order to obtain the ULDP for the law of the solution to certain SPDEs, they have provided a  sufficient condition, and to verify the condition, one needs some basic qualitative properties of the solution like existence, uniqueness, and tightness of the controlled analogues of the original SPDEs. Adopting the approach established in \cite{ADPDVM},  the authors in \cite{ABPFD, QCHG, FCAM,MS,MSABPD19, MSLS, LS}, etc. obtained several results in this direction for different  types of SPDEs. The author in the work \cite{MS} established  the equivalences between four different kinds of definitions  of  ULDP and ULP.
	
	One of the main concerns in the method provided by \cite{ADPDVM} is that it requires the convergence of initial conditions and therefore one cannot use this method directly to prove ULDP  for SPDEs having non-compact sets of the initial conditions. For several applications, for example, in the case of the study of exit times of a stochastic process $``u"$ from its domain, the large deviations of $``u"$ must be uniform over the initial conditions lie in the bounded subsets of the space that are not necessarily compact \cite{MSABPD19}. Recently, in the work \cite{MS}, the  author  introduced a new definition called the \textsl{equicontinuous uniform Laplace principle} (EULP) of ULP and showed that the definition of EULP is equivalent to the definition of  FWULDP without any compactness assumptions. In order to prove  EULP, he came with  a sufficient condition under which the law of the solution follows the  ULDP with uniformity being over initial conditions that belong to bounded but not necessarily compact sets. In the work \cite{MSLS}, the authors  adopted the methodology from \cite{MS} and established the ULDP for the law of the solution of the stochastic Burgers type equations. Here, they  considered the nonlinearity of polynomial growth of any order and the driving noise as a finite-dimensional Wiener process. Later, the author in \cite{LS23} proved the ULDP for the law of the solutions to a class of  semilinear SPDEs driven by  Brownian sheet with bounded noise coefficient. In both works \cite{MSLS} and  \cite{LS23}, the uniformity is considered with respect to the initial conditions that are  bounded and do not necessarily belongs to a compact set.

	\subsection{Objectives and novelties}
	In this work, we are mainly considering the SGBH system \eqref{1.1}-\eqref{1.2} consisting of a multiplicative noise which is white in time and colored in space with the coefficient $g(\cdot,\cdot,\cdot)$ having a linear growth with respect to the unknown $u_{\epsilon}$ (Hypothesis \ref{hyp1}). To the best of our knowledge, this type of noise for this model and the following results on this model have not been explored yet.
	\begin{itemize}
		\item First we  prove the existence of  a unique local mild solution to the  system \eqref{1.1}-\eqref{1.2} in the sense of Walsh (Definition \ref{def1})  by using a truncated system and fixed point arguments. Then by using the uniform energy estimates of the local solution, stopping time arguments, tightness  properties and Skorokhod's representation theorem, we prove the global existence of the solution.
		\item Next, we move to the LDP part, where we discuss two different types of LDP  depending on the availability of the initial conditions.
		\begin{enumerate}	 
			\item  The first LDP result is based on a variational representation method developed  in \cite{ADPDVM} and to use it, we need the initial data over a compact subset.
			\item If we are given the non-compact sets of initial data, then it is not enough to establish  ULP as mentioned in \cite{ADPDVM}, and hence for this situation, we borrow  ideas from the works \cite{MS, MSLS}. We verify the sufficient condition for EULP and hence ULDP in two different topologies: the $\mathrm{C}([0, T ];\mathrm{L}^p([0,1])) $ topology where the uniformity is over $\mathrm{L}^p([0,1])$-bounded sets of initial conditions (for $p\geq \max\{6,2\delta+1\}$), and secondly in the $\mathrm{C} ([0, T ] \times[0,1])$ topology with uniformity being over bounded subsets in the $\mathrm{C}([0,1])$-norm.  To prove the ULDP in $ \C([0,T]\times [0,1]) $, we use the embedding  $ \C([0,T]\times [0,1])\hookrightarrow\C([0,T]; \L^p([0,1]))$ and Vardhan's contraction principle \cite{V66}.
		\end{enumerate}
		\item Finally, we consider SGBH equation perturbed by a space-time white noise with bounded noise coefficient   and  establish the ULDP for the laws of solutions in the $\mathrm{C}([0, T ];\mathrm{L}^p([0,1])), $  for $p\geq 2\delta+1$ and $\mathrm{C} ([0, T ] \times[0,1])$ topologies. 
	\end{itemize}
The well-posedess and LDP results obtained in this work hold true for stochastic Burgers as well as Burgers-Huxley equations. The main difference between the works \cite{MSLS,LS23,LS} is that none of them considers the infinite dimensional multiplicative time white and space colored noise with coefficient having linear growth.

	The main difficulties in proving our results are the presence of a polynomial type of nonlinearity and the infinite-dimensional of noise with unbounded noise coefficient. The polynomial nonlinearites are handled by using Green's function estimates  (see Appendix \ref{SUR}).  
	We overcame the difficulty of  a proper  Burkholder-Davis-Gundy (BDG) inequality in $\L^p$-spaces by using the results from \cite{MCV,IY}. Recently, the authors in \cite{MCV,IY} obtained BDG inequality in UMD Banach spaces, which helped us to handle several maximal inequalities. Precisely, the authors in \cite{MCV} showed that if $\mathrm{M}$ is an $\L^p$-bounded martingale, $1<p<\infty$, with $\mathrm{M}_0=0$, that takes values in a UMD Banach space $(\mathrm{X},\|\cdot\|_{\mathrm{X}})$ over a measure space $(S,\Sigma,\mu)$, then for all $t\geq 0$, 
	\begin{align}\label{18}
		\E\left[\sup_{0\leq s\leq t}\|\mathrm{M}_s(\sigma)\|_{\mathrm{X}}^p\right]\leq C_{p,\mathrm{X}}\mathbb{E}\left[\|[\mathrm{M}(\sigma)]_t^{\frac{1}{2}}\|_{\mathrm{X}}^p\right],
	\end{align}
	where the quadratic variation $[\mathrm{M}(\sigma)]_t$ is considered pointwise in $\sigma\in S$. As $\L^p([0,1]),$ $1<p<\infty$ is a UMD Banach space, we can use the above BDG inequality in the sequel. We also point out that in the case of space-time white noise with bounded noise coefficient, we are able to prove the ULDP in the  $\mathrm{C}([0, T ];\mathrm{L}^p([0,1]))$ topology  for $p\geq 2\delta+1$, as we are not using Lemma \ref{lemE2} for the tightness of stochastic convolution.

	\subsection{Organization of the paper}
	The existence and uniqueness of local as well as global mild solution in the sense of Walsh of the system \eqref{1.1}-\eqref{1.2} is discussed in  Section \ref{Sec3} (Theorem \ref{thrmE1}, Lemmas \ref{lemUE} and \ref{lemE2} and Theorem \ref{thrmE2}). In Section \ref{Sec4}, we start with the basics of LDP and state a sufficient Condition \ref{Cond2} and recall a general result for LDP (Theorem \ref{thrmLDP}). Then, we move to the proof of the existence and uniqueness of solutions to the  stochastic controlled integral equation \eqref{CE1} and skeleton integral equation \eqref{CE2} (Theorems \ref{thrmLDP1} and \ref{thrmLDP3}, respectively). Later, we prove  our main theorem (Theorem \ref{thrmLDP4}) of this section, that is, LDP for the laws of solutions to the system \eqref{1.1}-\eqref{1.2}. In Section \ref{SecULDP}, we  first  recall some useful definitions and results from \cite{MS} of ULDP and the sufficient Condition \ref{cond} which ensures EULP and hence ULDP by Theorem \ref{thrm3.3}. Then, we state and prove several intermediate results (Proposition \ref{PrioriEstimate}, Corollary \ref{UCPC}) which help us to obtain our main results (Theorems \ref{LDPinLp} and \ref{LDPinC}) of the section. We obtain the proof of Theorem \ref{LDPinLp} by using uniform convergence in probability (Theorem \ref{UCP}) and Theorem \ref{ExistenceofControled}. Similarly, we prove Theorem \ref{UCP} by combining convergence in probability in supremum norm (Theorem \ref{ContinuousCase}) along with Theorem \ref{ExistenceofControled}.	In Section  \ref{ULDPBDD}, we establish the ULDP for the law of the solution of the  system \eqref{1.1}-\eqref{1.2} with space-time white noise with a bounded noise coefficient by adopting an approach given in \cite{MSLS}. We wind up the article by recalling some useful results from \cite{IG,IGCR}  in Appendix \ref{SUR}.


	\section{ Solvability Results}\label{Sec3}\setcounter{equation}{0}
	In this section, we discuss the solvability of the system \eqref{1.1}-\eqref{1.2}. In order achieve it,  we first establish a local solvability result (Theorem \ref{thrmE1}) using the truncation defined in \eqref{E1}  and a fixed point argument. Then, we establish the uniform energy estimate (Lemma \ref{lemUE}) and tightness of stocahstic convolution term (Lemma \ref{lemE2}) followed by global solvability results (Theorem \ref{thrmE2}).

	\subsection{Local solution}
	In order to obtain a mild solution of the system \eqref{1.1}-\eqref{1.2}, we use a truncation technique. 
	Let $\Pi_R(\cdot)$ be  a $\C^1(\R)$ function   such that 
	\begin{equation}\label{E1}
		\Pi_R (r)=\left\{
		\begin{aligned}
			1, \ \ &\text{ if } \ \ |r|\leq R,\\
			0, \ \ &\text{ if } \ \ |r|\geq R+1,
		\end{aligned}
		\right.
	\end{equation}
	and $\big|\frac{\d\Pi_R}{\d r }(r)\big|\leq 2$, for all $r\in\R$. Note that by the mean value theorem, one can show that $|\Pi_R(r_1)-\Pi_R(r_2)|\leq 2|r_1-r_2|$, for all $r_1,r_2\in\R$.  Let us  now introduce the following truncated system:
	\begin{align}\label{E2}\nonumber
		&\frac{\partial u_{\e,R}(t,x)}{\partial t}\nonumber\\&=\nu\frac{\partial^2 u_{\e,R}(t,x)}{\partial x^2}+\frac{\alpha}{\delta+1}\frac{\partial }{\partial x}\Pi_R(\|u_{\e,R}(t)\|_{\L^p})\p(u_{\e,R}(t,x))+\beta\Pi_R(\|u_{\e,R}(t)\|_{\L^p}) \c(u_{\e,R}(t,x)) \nonumber\\&\quad+\sqrt{\e}\Pi_R(\|u_{\e,R}(t)\|_{\L^p})g(t,x,u_{\e,R}(t,x))\frac{\partial^2\W^\Q(t,x) }{\partial x\partial t},
	\end{align}
	for $(t,x)\in(0,T)\times(0,1)$, with the same boundary and initial conditions given in \eqref{1.2}. Let us write down  the mild form of the above truncated equation
	\begin{align}\label{E3}\nonumber
	&	u_{\e,R}(t,x)\nonumber\\&=\int_0^1G(t,x,y)u^0(y)\d y+\frac{\alpha}{\delta+1}\int_0^t\int_0^1\frac{\partial G}{\partial y}(t-s,x,y)\Pi_R(\|u_{\e,R}(s)\|_{\L^p})\p(u_{\e,R}(s,y))\d y\d s\nonumber\\&\nonumber\quad +\beta\int_0^t\int_0^1G(t-s,x,y)\Pi_R(\|u_{\e,R}(s)\|_{\L^p})\c(u_{\e,R}(s,y))\d y\d s\\&\quad+ \sqrt{\e}
		\int_0^t\int_0^1G(t-s,x,y)\Pi_R(\|u_{\e,R}(s)\|_{\L^p})g(s,y,u_{\e,R}(s,y))\W^\Q(\d s,\d y), \ \ \P\text{-a.s.}
	\end{align}Set 
	\begin{align}\nonumber\label{E4}
		\mathscr{A}_1u_{\e,R}(t,x)&:=\int_0^1G(t,x,y)u^0(y)\d y,\\ \nonumber
		\mathscr{A}_2u_{\e,R}(t,x)&:=\int_0^t\int_0^1\frac{\partial G}{\partial y}(t-s,x,y)\Pi_R(\|u_{\e,R}(s)\|_{\L^p})\p(u_{\e,R}(s,y))\d y\d s,\\\nonumber
		\mathscr{A}_3u_{\e,R}(t,x)&:=\int_0^t\int_0^1G(t-s,x,y)\Pi_R(\|u_{\e,R}(s)\|_{\L^p})\c(u_{\e,R}(s,y))\d y\d s,\\ \nonumber
		\mathscr{A}_4u_{\e,R}(t,x)&:=\sqrt{\e}
		\int_0^t\int_0^1G(t-s,x,y)\Pi_R(\|u_{\e,R}(s)\|_{\L^p})g(s,y,u_{\e,R}(s,y))\W^\Q(\d s,\d y),\\
		\mathscr{A}u_{\e,R}(t,x)&:= \mathscr{A}_1u_{\e,R}(t,x)+\frac{\alpha}{\delta+1}\mathscr{A}_2u_{\e,R}(t,x)+\beta\mathscr{A}_3u_{\e,R}(t,x)+\mathscr{A}_4u_{\e,R}(t,x).
	\end{align}
	Let us define a Banach space $\mathcal{H}$ consisting of the adapted processes $u:\Omega\times[0,T]\to \L^p([0,1])$ such that 
	\begin{align*}
		\|u\|_\mathcal{H}^p:=\sup_{t\in[0,T]}e^{-\lambda p t}\E\big[\|u(t)\|_{\L^p}^p\big],
	\end{align*} where $\lambda$ will be fixed later.
	
	\begin{theorem}\label{thrmE1}
		Let us assume that $u^0\in\L^p([0,1])$, for $p\geq2\delta+1$. Then, there exists a unique $\L^p([0,1])$-valued $\mathscr{F}_t$-adapted continuous process $u_{\e,R}(\cdot,\cdot)$ satisfying the truncated integral equation \eqref{E3} such that 
		\begin{align}\label{E5}
			\E\bigg[\sup_{t\in[0,T]}\|u_{\e,R}(t)\|_{\L^p}^p\bigg]\leq C(R,T).
		\end{align}
	\end{theorem}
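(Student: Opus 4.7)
The plan is to apply the Banach contraction mapping principle to the operator $\mathscr{A}$ defined in \eqref{E4}, on the weighted space $(\mathcal{H},\|\cdot\|_{\mathcal{H}})$, with the parameter $\lambda>0$ chosen sufficiently large at the end. The truncation $\Pi_R$ will render the nonlinear terms globally Lipschitz (in the relevant $\L^p$-topology), while the $e^{-\lambda p t}$-weight will absorb the time-integral constants that arise from Green's function estimates.

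First I would verify that $\mathscr{A}$ maps $\mathcal{H}$ into itself. The term $\mathscr{A}_1 u$ is handled by the $\L^p$-contractivity of the heat semigroup associated with $G$, giving $\|\mathscr{A}_1 u(t)\|_{\L^p}\leq\|u^0\|_{\L^p}$. For $\mathscr{A}_2$, I would use Young's convolution inequality together with the Green's function bound $\|\frac{\partial G}{\partial y}(t-s,x,\cdot)\|_{\L^{q}}\leq C(t-s)^{-\kappa}$ from Appendix \ref{SUR} (with a suitable $\kappa<1$ after choosing $q$ conjugate to $p/(\delta+1)$), combined with $\|\p(u(s))\|_{\L^{p/(\delta+1)}}\leq \|u(s)\|_{\L^p}^{\delta+1}$. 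The truncation factor $\Pi_R$ ensures this is at most $(R+1)^{\delta+1}$ whenever it is non-zero, so the time-singular convolution is integrable. For $\mathscr{A}_3$, the cubic expansion of $\c(u)$ has degree $2\delta+1$, and the assumption $p\geq 2\delta+1$ allows the bound $\|\c(u(s))\|_{\L^{p/(2\delta+1)}}\leq C(1+\|u(s)\|_{\L^p}^{2\delta+1})$ to be used, again capped by the truncation. For $\mathscr{A}_4$, I would invoke the UMD-valued BDG inequality \eqref{18} in $\L^p([0,1])$, together with the linear growth of $g$, the summability $\sum_j q_j^2<\infty$ from \eqref{181}, and the $\L^p$-bounds on $G(t-s,x,\cdot)$, to get
\begin{align*}
\E\bigl[\|\mathscr{A}_4 u(t)\|_{\L^p}^p\bigr]\leq C\,\e^{p/2}\,K^p\int_0^t (t-s)^{-\kappa'}\bigl(1+\E\|u(s)\|_{\L^p}^p\bigr)\,\d s,
\end{align*}
for some $\kappa'<1$. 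Multiplying by $e^{-\lambda p t}$ and taking supremum over $t\in[0,T]$ yields $\|\mathscr{A} u\|_{\mathcal{H}}<\infty$ for every $u\in\mathcal{H}$.

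Next, I would establish the contraction. For $u,v\in\mathcal{H}$, the difference $\mathscr{A} u-\mathscr{A} v$ is estimated term by term using the local Lipschitz bounds $|\p(r)-\p(s)|\leq C(|r|^{\delta}+|s|^{\delta})|r-s|$ and an analogous inequality for $\c$, the fact that $|\Pi_R(r)-\Pi_R(s)|\leq 2|r-s|$ together with $\|\Pi_R\|_\infty\leq 1$, and the Lipschitz property of $g$. Combining these with Hölder's inequality in $x$ (legitimate because $p\geq 2\delta+1$), the Green's function estimates used above, and the UMD-BDG inequality \eqref{18} for the stochastic term, I would obtain
\begin{align*}
e^{-\lambda p t}\E\bigl[\|\mathscr{A} u(t)-\mathscr{A} v(t)\|_{\L^p}^p\bigr]\leq C_{R,T,\e}\left(\int_0^t (t-s)^{-\kappa''}e^{-\lambda p(t-s)}\,\d s\right)\|u-v\|_{\mathcal{H}}^p.
\end{align*}
Since the integral tends to $0$ as $\lambda\to\infty$, choosing $\lambda$ large enough gives a strict contraction, and the Banach fixed point theorem produces a unique $u_{\e,R}\in\mathcal{H}$ solving \eqref{E3}.

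Once the fixed point is obtained, the estimate \eqref{E5} on the supremum is deduced by repeating the self-map estimates but now keeping the $\sup_{t\in[0,T]}\E[\cdot]$ inside (for the deterministic parts) and placing the supremum \emph{inside} the expectation (for the stochastic part), where the UMD-BDG inequality \eqref{18} directly controls the maximal function; the factor $\Pi_R\leq \mathbf{1}_{\{\|u_{\e,R}(s)\|_{\L^p}\leq R+1\}}$ bounds all the polynomial quantities deterministically by constants depending only on $R,\delta,\gamma$, and the stochastic part contributes at most $C(R,T,\e,K,p)$ via Grönwall applied to the weakly singular integral kernel. Path-continuity of $u_{\e,R}$ in $\L^p([0,1])$ then follows from Kolmogorov's criterion applied to each of the four terms in the mild formulation (the stochastic convolution is handled by standard factorization/UMD-BDG arguments, and the deterministic convolutions are straightforward by dominated convergence once the integrability is in hand). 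The principal obstacle throughout will be the simultaneous presence of the time-singular kernel $\partial G/\partial y$ and the polynomial nonlinearity $\p(u)=u^{\delta+1}$ in $\mathscr{A}_2$: the precise choice of Hölder exponents (coupling $p$, $\delta$ and the Green's function $\L^q$-exponent) must be balanced so that the time exponent $\kappa$ stays strictly below $1$, and it is for this step that the condition $p\geq 2\delta+1$ is essential.
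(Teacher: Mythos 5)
Your proposal follows essentially the same route as the paper: a contraction mapping argument for $\mathscr{A}$ on the exponentially weighted space $\mathcal{H}$, with the truncation $\Pi_R$ capping the polynomial nonlinearities, the Green's function estimates \eqref{A1}--\eqref{A7} handling the singular kernels, the UMD-valued BDG inequality \eqref{18} controlling the stochastic convolution, and $\lambda$ chosen large so that the Gamma-function factors make the Lipschitz constant strictly less than one. The derivation of \eqref{E5} from the self-map estimates and the use of $p\geq 2\delta+1$ to close the Hölder exponents also match the paper's Step 1 and Step 2.
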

	\begin{proof}
		The proof of this theorem is based on fixed point arguments.	We divide the proof into two steps. In the first step, we show that the operator $\mathscr{A}$ is well defined in the space $\mathcal{H}$. In the second step, we prove that $\mathscr{A}$ is a contraction map and then the existence and uniqueness of the truncated integral equation \eqref{E3} follows from the  contraction mapping principle.

		\vspace{2mm}
		\noindent
		\textbf{Step 1.} \textsl{For $p\geq 2\delta+1$, $\E\bigg[\sup\limits_{t\in[0,T]}\|\mathscr{A}u_{\e,R}(t)\|_{\L^p}^p\bigg]\leq C(R,T)$.}
		
		\noindent Using \eqref{A1} and Young's inequality, we find 
		\begin{align}\label{E6}
			\E\big[\|\mathscr{A}_1u_{\e,R}(t)\|_{\L^p}^p\big] =\bigg\|\int_0^1G(t,\cdot,y)u^0(y)\d y\bigg\|_{\L^p}^p \leq C\|u^0\|_{\L^p}^p.
		\end{align}
		Using similar arguments as in Proposition 3.3 \cite{AKMTM3}, we obtain the following estimates:
		\begin{align}\label{E7}
			\E\bigg[	\|\mathscr{A}_2u_{\e,R}(t)\|_{\L^p}^p\bigg] &\leq C(R+1)^{p(\delta+1)}\int_0^t(t-s)^{-1+\frac{p-\delta}{2p}}\d s\leq C(R+1)^{p(\delta+1)}t^{\frac{p-\delta}{2p}} ,\\ \nonumber\label{E8}
			\E\bigg[	\|\mathscr{A}_3u_{\e,R}(t)\|_{\L^p}^p\bigg]&\leq C\int_0^t\bigg\{(R+1)^{p(\delta+1)}(1+\gamma)(t-s)^{-\frac{1}{2}+\frac{p-\delta}{2p}}+(R+1)^p\gamma \\&\nonumber\qquad+(R+1)^{p(2\delta+1)}(t-s)^{-\frac{1}{2}+\frac{p-2\delta}{2p}}\bigg\}\d s\\ &\leq C \left((R+1)^{p(\delta+1)}(1+\gamma)t^{\frac{1}{2}+\frac{p-\delta}{2p}}+(R+1)^p\gamma t +(R+1)^{p(2\delta+1)}t^{\frac{1}{2}+\frac{p-2\delta}{2p}}  \right),
		\end{align}for $p\geq2\delta+1$. Similarly, we consider the term $\E\big[	\|\mathscr{A}_4u_{\e,R}(t)\|_{\L^p}^p\big]$, and estimate it using Hypothesis \ref{hyp1}, Fubini's theorem, BDG (cf. \eqref{18}), H\"older's,   Minkowski's inequalities, \eqref{A1}, \eqref{A7} and Young's   inequality for convolution as 
		\begin{align}\label{E9}\nonumber
			&\E\bigg[	\|\mathscr{A}_4u_{\e,R}(t)\|_{\L^p}^p\bigg] \\&\nonumber= \e^{\frac{p}{2}} \E\bigg[\bigg\|	\int_0^t\int_0^1G(t-s,\cdot,y)\Pi_R(\|u_{\e,R}(s)\|_{\L^p})g(s,y,u_{\e,R}(s,y))\W^\Q(\d s,\d y) \bigg\|_{\L^p}^p\bigg]\\& \nonumber= \e^{\frac{p}{2}} \E\bigg[\bigg\|\sum_{j\in\N}	\int_0^t\big(G(t-s,\cdot,y)\Pi_R(\|u_{\e,R}(s)\|_{\L^p})g(s,\cdot,u_{\e,R}(s,y)),q_j\varphi_j(y)\big)\d \beta_j(s) \bigg\|_{\L^p}^p\bigg]	\\&\nonumber\leq C  \e^{\frac{p}{2}} \E\bigg[\bigg\|\bigg(\sum_{j\in\N}	\int_0^t\big|\big(G(t-s,\cdot,y)\Pi_R(\|u_{\e,R}(s)\|_{\L^p})g(s,y,u_{\e,R}(s,y)),q_j\varphi_j(y)\big)\big|^2\d s\bigg)^{\frac{1}{2}}\bigg\|_{\L^p}^p\bigg]	\nonumber\\&\leq C  \e^{\frac{p}{2}} 
			\E\bigg[\bigg\|\bigg(\sum_{j\in\N}\int_0^1q_j^2\varphi_j^2(y)\d y\nonumber\\&\qquad\qquad\nonumber\times\bigg(\int_0^t\int_0^1G^2(t-s,\cdot,y)\Pi_R^2(\|u_{\e,R}(s)\|_{\L^p})g^2(s,y,u_{\e,R}(s,y))\d y\d s\bigg)\bigg)^{\frac{1}{2}}\bigg\|_{\L^p}^p\bigg]\nonumber\\&\nonumber \leq C \e^{\frac{p}{2}} \E\bigg[\bigg\|\int_0^t\int_0^1G^2(t-s,\cdot,y)\Pi_R^2(\|u_{\e,R}(s)\|_{\L^p})g^2(s,y,u_{\e,R}(s,y))\d y\d s\bigg\|_{\L^{\frac{p}{2}}}^{\frac{p}{2}}\bigg]
			\\&\nonumber \leq C\e^{\frac{p}{2}}\E\bigg[\bigg(\int_0^t(t-s)^{-1}\big\|e^{-\frac{|\cdot|^2}{a_1(t-s)}}*\big|\Pi_R^2(\|u_{\e,R}(s)\|_{\L^p})g^2(s,\cdot,u_{\e,R}(s,\cdot))\big|\big\|_{\L^{\frac{p}{2}}}\d s\bigg)^{\frac{p}{2}}\bigg] \nonumber\\&\nonumber\leq  CK \e^{\frac{p}{2}} t^{\frac{p-2}{4}}\E\bigg[ \int_0^t(t-s)^{-\frac{1}{2}}\Pi_R^p(\|u_{\e,R}(s)\|_{\L^p})\big(1+\|u_{\e,R}(s)\|_{\L^p}^p\big)\d s\bigg]
			\\&\leq CK \e^{\frac{p}{2}}\big(1+R^p\big) t^\frac{p}{4},
		\end{align}where we have used the fact that $\{\varphi_j(x)\}_{j\in\N}$ is an orthonormal family of $\L^2([0,1])$ and $\sum\limits_{j\in\N}q_j^2<\infty$. Combining \eqref{E6}-\eqref{E9}, the claim follows. 
		
		\vspace{2mm}
		\noindent
		\textbf{Step 2.} \textsl{$\mathscr{A}$ is a contraction.} Let $u_{\e,R},v_{\e,R}\in\mathcal{H}$.   With no loss of generality, we can assume that $\|u_{\e,R}\|_{\L^p}\leq \|v_{\e,R}\|_{\L^p}$. Using Taylor's formula, H\"older's inequality, the embedding $\L^q([0,1])\hookrightarrow\L^p([0,1])$, for $q\geq p$,  mean value theorem, and the definition of mapping $\Pi_R(\cdot)$ given in \eqref{E1}, we have 
		\begin{align}\label{E10}\nonumber
			&	\|\Pi_R(\|u_{\e,R}\|_{\L^p})\c(u_{\e,R})-\Pi_R(\|v_{\e,R}\|_{\L^p})\c(v_{\e,R})\|_{\L^1} \\&\nonumber\leq \|(\Pi_R((\|u_{\e,R}\|_{\L^p}))-\Pi_R(\|v_{\e,R}\|_{\L^p}))\c(u_{\e,R})\|_{\L^1} + \|\Pi_R(\|v_{\e,R}\|_{\L^p})(\c(u_{\e,R})-\c(v_{\e,R}))\|_{\L^1} \\&\nonumber \leq \|(\Pi_R((\|u_{\e,R}\|_{\L^p}))-\Pi_R(\|v_{\e,R}\|_{\L^p}))((1+\gamma)u_{\e,R}^{\delta+1}-\gamma u_{\e,R}-u_{\e,R}^{2\delta+1})\|_{\L^1}\\&\nonumber \quad + \|\Pi_R(\|v_{\e,R}\|_{\L^p}((1+\gamma)(u_{\e,R}^{\delta+1}-v_{\e,R}^{\delta+1})-\gamma(u_{\e,R}-v_{\e,R})-(u_{\e,R}^{2\delta+1}-v_{\e,R}^{2\delta+1}))\|_{\L^1}\\&\nonumber\leq \big|\Pi_R((\|u_{\e,R}\|_{\L^p}))-\Pi_R(\|v_{\e,R}\|_{\L^p})\big|\left\{(1+\gamma)\|u_{\e,R}\|_{\L^{\delta+1}}^{\delta+1}+\gamma\|u_{\e,R}\|_{\L^1}+\|u_{\e,R}\|_{\L^{2\delta+1}}^{2\delta+1}\right\}\\&\nonumber\quad +\Pi_R(\|v_{\e,R}\|_{\L^p})\big\{(1+\gamma)(1+\delta)\|(u_{\e,R}-v_{\e,R})(\theta u_{\e,R}+(1-\theta) v_{\e,R}))^\delta\|_{\L^1}\\&\nonumber\qquad+\gamma \|u_{\e,R}-v_{\e,R}\|_{\L^1}+(1+2\delta)\|(u_{\e,R}-v_{\e,R})(\theta u_{\e,R}+(1-\theta) v_{\e,R}))^{2\delta}\|_{\L^1}\big\}\\&\nonumber \leq \big|\Pi_R((\|u_{\e,R}\|_{\L^p}))-\Pi_R(\|v_{\e,R}\|_{\L^p})\big|\left\{(1+\gamma)\|u_{\e,R}\|_{\L^p}^{\delta+1}+\gamma\|u_{\e,R}\|_{\L^p}+\|u_{\e,R}\|_{\L^p}^{2\delta+1}\right\}\\&\nonumber\quad +
			\Pi_R(\|v_{\e,R}\|_{\L^p})\big\{(1+\gamma)(1+\delta)\|\theta u_{\e,R}+(1-\theta) v_{\e,R})\|_{\L^\frac{p\delta}{p-1}}^\delta\\&\nonumber\qquad+\gamma \|u_{\e,R}-v_{\e,R}\|_{\L^p}+(1+2\delta)\|\theta u_{\e,R}+(1-\theta) v_{\e,R})\|_{\L^\frac{2p\delta}{p-1}}^{2\delta}\big\}\|u_{\e,R}-v_{\e,R}\|_{\L^p}
			\\&\nonumber \leq \big|\Pi_R((\|u_{\e,R}\|_{\L^p}))-\Pi_R(\|v_{\e,R}\|_{\L^p})\big|\left\{(1+\gamma)\|u_{\e,R}\|_{\L^p}^{\delta+1}+\gamma\|u_{\e,R}\|_{\L^p}+\|u_{\e,R}\|_{\L^p}^{2\delta+1}\right\}\\&\nonumber\quad +
			\Pi_R(\|v_{\e,R}\|_{\L^p})\big\{(1+\gamma)(1+\delta)(\| u_{\e,R}\|_{\L^p}+\|v_{\e,R}\|_{\L^p})^\delta\nonumber\\&\quad\nonumber+\gamma+(1+2\delta)(\| u_{\e,R}\|_{\L^p}+\|v_{\e,R}\|_{\L^p})^{2\delta}\big\}\|u_{\e,R}-v_{\e,R}\|_{\L^p}
			\\&\nonumber\leq 2\|u_{\e,R}-v_{\e,R}\|_{\L^p}\left\{(1+\gamma)(R+1)^{\delta+1}+\gamma R+1+(R+1)^{2\delta+1}\right\}\\&\nonumber\quad +
			\big\{2^\delta(1+\gamma)(1+\delta)(R+1)^\delta+\gamma +2^{2\delta}(1+2\delta)(R+1)^{2\delta}\big\}\|u_{\e,R}-v_{\e,R}\|_{\L^p} \\&\leq C(\delta,\gamma,R)\|u_{\e,R}-v_{\e,R}\|_{\L^p},
		\end{align}
		for $p\geq 2\delta+1$.	Using similar calculations as  in the above inequality yields
		\begin{align}\label{E11}
			\|\Pi_R(\|u_{\e,R}\|_{\L^p})\p(u_{\e,R})-\Pi_R(\|v_{\e,R}\|_{\L^p})\p(v_{\e,R})\|_{\L^1} 
			\leq C(\delta,R)\|u_{\e,R}-v_{\e,R}\|_{\L^p},
		\end{align} for $p\geq \delta+1$. Using \eqref{E11}, \eqref{A2}, \eqref{A7}, Minkowski's, H\"older's and Young's inequalities, we find 
	\small{	\begin{align}\label{E12}\nonumber
			&\|\mathscr{A}_2(u_{\e,R}(t))-\mathscr{A}_2(v_{\e,R}(t))\|_{\L^p}\\&\nonumber\leq \bigg\| \int_0^t\int_0^1\frac{\partial G}{\partial y}(t-s,\cdot,y)\big(\Pi_R(\|u_{\e,R}(s)\|_{\L^p})\p(u_{\e,R}(s,y))-\Pi_R(\|v_{\e,R}(s)\|_{\L^p})\p(v_{\e,R}(s,y))\big)\d y\d s\bigg\|_{\L^p}	\\&\nonumber\leq \int_0^t(t-s)^{-1}\big\| e^{\frac{-|\cdot|^2}{a_2(t-s)}}*\big|\Pi_R(\|u_{\e,R}(s)\|_{\L^p})\p(u_{\e,R}(s,\cdot))-\Pi_R(\|v_{\e,R}(s)\|_{\L^p})\p(v_{\e,R}(s,\cdot))\big|\big\|_{\L^p}\d s\\&\nonumber\leq \int_0^t(t-s)^{-1} \|e^{\frac{-|\cdot|^2}{a_2(t-s)}}\|_{\L^p}\big\|\Pi_R(\|u_{\e,R}(s)\|_{\L^p})\p(u_{\e,R}(s,\cdot))-\Pi_R(\|v_{\e,R}(s)\|_{\L^p})\p(v_{\e,R}(s,\cdot))\big\|_{\L^1}\d s
			\\&\nonumber\leq \int_0^t(t-s)^{-1+\frac{1}{2p}} \big\|\Pi_R(\|u_{\e,R}(s)\|_{\L^p})\p(u_{\e,R}(s,\cdot))-\Pi_R(\|v_{\e,R}(s)\|_{\L^p})\p(v_{\e,R}(s,\cdot))\big\|_{\L^1}\d s\\&\leq 
			C(\delta,R)	\int_0^t(t-s)^{-1+\frac{1}{2p}} \|u_{\e,R}(s)-v_{\e,R}(s)\|_{\L^p}\d s.
		\end{align}}
	From above inequality, we deduce for all $t\in[0,T]$
	
		\begin{align}\label{E13}\nonumber
			&	\|\mathscr{A}_2(u_{\e,R})-\mathscr{A}_2(v_{\e,R})\|_{\mathcal{H}}^p \nonumber\\&\leq 	C(\delta,R)\sup_{t\in[0,T]}e^{-\lambda p t}\E\bigg[	\bigg(\int_0^t(t-s)^{-1+\frac{1}{2p}} \|u_{\e,R}(s)-v_{\e,R}(s)\|_{\L^p}\d s\bigg)^p\bigg]\nonumber\\&\leq  C(\delta,R,p)T^{\frac{p-1}{2p}}\sup_{t\in[0,T]}\bigg(	\int_0^t(t-s)^{-1+\frac{1}{2p}}e^{-\lambda(t-s)}e^{-\lambda s}\E\big[\|u_{\e,R}(s)-v_{\e,R}(s)\|_{\L^p}^p\big]\d s\bigg)\nonumber\\&\leq C(\delta,R,p,T) \left(\frac{\Gamma(\frac{1}{2p})}{\lambda^{\frac{1}{2p}}}\right)\|u_{\e,R}-v_{\e,R}\|_\mathcal{H}^p,
		\end{align}
		where we have used H\"older's inequality and $\Gamma(\cdot)$ denotes the gamma function.
		
		Using \eqref{E11}, \eqref{A2}, \eqref{A7}, Minkowski's, H\"older's and Young's inequalities, we find 
		\begin{align}\label{E14}\nonumber
			&\|\mathscr{A}_3(u_{\e,R}(t))-\mathscr{A}_3(v_{\e,R}(t))\|_{\L^p}\\&\nonumber \leq \bigg\| \int_0^t\int_0^1G(t-s,\cdot,y)\big(\Pi_R(\|u_{\e,R}(s)\|_{\L^p})\c(u_{\e,R}(s,y))-\Pi_R(\|v_{\e,R}(s)\|_{\L^p})\c(v_{\e,R}(s,y))\big)\d y\d s\bigg\|_{\L^p}\\&\nonumber\leq 
			\int_0^t(t-s)^{-\frac{1}{2}}\big\| e^{\frac{-|\cdot|^2}{a_1(t-s)}}*\big|\Pi_R(\|u_{\e,R}(s)\|_{\L^p})\c(u_{\e,R}(s,\cdot))-\Pi_R(\|v_{\e,R}(s)\|_{\L^p})\c(v_{\e,R}(s,\cdot))\big|\big\|_{\L^p}\d s\\&\nonumber\leq \int_0^t(t-s)^{-1} \|e^{\frac{-|\cdot|^2}{a_2(t-s)}}\|_{\L^p}\big\|\Pi_R(\|u_{\e,R}(s)\|_{\L^p})\c(u_{\e,R}(s,\cdot))-\Pi_R(\|v_{\e,R}(s)\|_{\L^p})\c(v_{\e,R}(s,\cdot))\big\|_{\L^1}\d s\\& \leq 
			C(\delta,\gamma,R)\int_0^t(t-s)^{-\frac{1}{2}+\frac{1}{2p}}\|u_{\e,R}(s)-v_{\e,R}(s)\|_{\L^p}\d s,
		\end{align} for all $t\in[0,T]$. From the above inequality, we find
		\begin{align}\label{E15}\nonumber
			&	\|\mathscr{A}_3(u_{\e,R})-\mathscr{A}_3(v_{\e,R})\|_{\mathcal{H}}^p \nonumber\\&\leq C(\delta,R,p)T^{\frac{p^2-1}{2p}}\sup_{t\in[0,T]}\bigg(	\int_0^t(t-s)^{-\frac{1}{2}+\frac{1}{2p}}e^{-\lambda(t-s)}e^{-\lambda s}\E\big[\|u_{\e,R}(s)-v_{\e,R}(s)\|_{\L^p}^p\big]\d s\bigg)\nonumber\\&\leq C(\delta,\gamma,R,p,T) \left(\frac{\Gamma(\frac{1}{2}+
				\frac{1}{2p})}{\lambda^{\frac{1}{2}+\frac{1}{2p}}}\right)\|u_{\e,R}-v_{\e,R}\|_\mathcal{H}^p.
		\end{align}Using similar arguments as in \eqref{E9}, Hypothesis \ref{hyp1} (Lipschitz continuity of $g$) and Minkowski's inequality, we obtain 
		\begin{align}\label{E16}\nonumber
			&\E\big[\|\mathscr{A}_4(u_{\e,R}(t))-\mathscr{A}_4(v_{\e,R}(t))\|_{\L^p}^p\big]\\&\nonumber  \leq C\e^p\E\bigg[\bigg(\bigg\| \int_0^t\int_0^1G^2(t-s,\cdot,y)\big(\Pi_R(\|u_{\e,R}(s)\|_{\L^p})g(s,y,u_{\e,R}(s,y))\\&\nonumber\qquad-\Pi_R(\|v_{\e,R}(s)\|_{\L^p})g(s,y,v_{\e,R}(s,y))\big)^2\d y\d s\bigg)^\frac{1}{2}\bigg\|_{\L^p}^p\bigg]	\\&\nonumber\leq
			C(\e,p)\E\bigg[\bigg(\int_0^t(t-s)^{-1}\big\|e^{-\frac{|\cdot|^2}{a_1(t-s)}}*\big|\Pi_R(\|u_{\e,R}(s)\|_{\L^p})g(s,\cdot,u_{\e,R}(s,\cdot))\nonumber\\&\qquad\nonumber-\Pi_R(\|v_{\e,R}(s)\|_{\L^p})g(s,\cdot,v_{\e,R}(s,\cdot))\big|^2\big\|_{\L^{\frac{p}{2}}}\d s\bigg)^{\frac{p}{2}}\bigg]
			\\&\nonumber\leq
		C(\e,L,R,p)\E\bigg[\bigg(\int_0^t(t-s)^{-1+\frac{1}{p}}\|u_{\e,R}(s)-v_{\e,R}(s)\|_{\L^2}^2\d s\bigg)^{\frac{p}{2}}\bigg]
			\\&\nonumber\leq
			C(\e,L,R,p)\left(\bigg\{\E\bigg[\bigg(\int_0^t(t-s)^{-1+\frac{1}{p}}\|u_{\e,R}(s)-v_{\e,R}(s)\|_{\L^p}^2\d s\bigg)^{\frac{p}{2}}\bigg]\bigg\}^{\frac{2}{p}}\right)^\frac{p}{2}
			\\&\nonumber\leq
			C(\e,L,R,p)\left(\int_0^t(t-s)^{-1+\frac{1}{p}}\bigg\{\E\big[\|u_{\e,R}(s)-v_{\e,R}(s)\|_{\L^p}^p\big]\bigg\}^{\frac{2}{p}}\d s\right)^\frac{p}{2}
			\\&\nonumber\leq C(\e,L,R,p)\bigg\{\int_0^t(t-s)^{-1+\frac{1}{p}}e^{2\lambda s}\d s\bigg\}^{\frac{p}{2}}\sup_{s\in[0,t]}\bigg\{e^{-\lambda ps}\E\big[\|u_{\e,R}(s)-v_{\e,R}(s)\|_{\L^p}^p\big]\bigg\}
			\\&\nonumber\leq C(\e,L,R,p)\bigg\{e^{2\lambda t }\int_0^t(t-s)^{-1+\frac{1}{p}}e^{-2\lambda (t-s)}\d s\bigg\}^{\frac{p}{2}}\sup_{s\in[0,t]}\bigg\{e^{-\lambda ps}\E\big[\|u_{\e,R}(s)-v_{\e,R}(s)\|_{\L^p}^p\big]\bigg\}
			\\&\leq C(\e,L,R,p)\left(\frac{e^{2\lambda t} \Gamma\big(\frac{1}{p}\big)}{(2\lambda)^{\frac{1}{p}}}\right)^{\frac{p}{2}}\sup_{s\in[0,t]}\bigg\{e^{-\lambda ps}\E\big[\|u_{\e,R}(s)-v_{\e,R}(s)\|_{\L^p}^p\big]\bigg\},
		\end{align}for $p\geq2$. Thus, from \eqref{E16}, one can conclude that
		\begin{align}\label{E17}
			\|\mathscr{A}_4(u_{\e,R})-\mathscr{A}_4(v_{\e,R})\|_{\mathcal{H}}^p\leq C(\e,L,R,p)\left(\frac{ \Gamma\big(\frac{1}{p}\big)}{\lambda^{\frac{1}{p}}}\right)^{\frac{p}{2}}\|u_{\e,R}-v_{\e,R}\|_{\mathcal{H}}^p. 
		\end{align}
		Combining \eqref{E12}-\eqref{E17}, and fixing $\lambda>0$ sufficiently large such that
		\begin{align}\label{E18}
			C(\alpha,\beta,\gamma,\delta,\e, L,R,p,T)\left[ \left(\frac{\Gamma(\frac{1}{2p})}{\lambda^{\frac{1}{2p}}}\right)+ \left(\frac{\Gamma(\frac{1}{2}+
				\frac{1}{2p})}{\lambda^{\frac{1}{2}+\frac{1}{2p}}}\right)+\left(\frac{ \Gamma\big(\frac{1}{p}\big)}{\lambda^{\frac{1}{p}}}\right)^{\frac{p}{2}}\right]<1, 
		\end{align}for $p\geq2\delta+1$. For the above value of $\lambda$, the map $\mathscr{A}$ is a contraction on the Banach space $\mathcal{H}$. Therefore  there exists a unique fixed point for the map $\mathscr{A}$ by the contraction mapping principle and this gives the existence of a unique solution of the truncated integral equation \eqref{E3} and the estimate \eqref{E5} holds from Step 1.
	\end{proof}

	By the previous result we obtain the existence of a unique local mild solution of the system \eqref{1.1}-\eqref{1.2} up to a stopping time $$	\tau_{R}:=\inf\{t\geq0: \|u_\e(t)\|_{\L^p}\geq R\}\wedge T.$$  
	\subsection{Uniform estimates of the solution} 
	We now move to the proof of the existence of global  solution of the integral equation \eqref{2.2}. We need uniform energy estimates for our solution. Consider the following system:
	\begin{equation}\label{E19}
		\left\{
		\begin{aligned}
			\frac{\partial u_{\e,m}}{\partial t}(t,x)&=\nu\frac{\partial^2 u_{\e,m}}{\partial x^2}(t,x)+\frac{\alpha}{\delta+1}\frac{\partial}{\partial x}\p_m(u_{\e,m}(t,x))+\beta\c_m(u_{\e,m}(t,x))\\&\quad+\sqrt{\e}g_m(t,x,u_{\e,m}(t,x))\frac{\partial^2\W^\Q}{\partial t\partial x }, \ \ (t,x)\in(0,T)\times(0,1), \\ u_{\e,m}(t,0)&=u_{\e,m}(t,1)=0,\\
			u_{\e,m}(0,x)&=u^0_m,
		\end{aligned}
		\right.
	\end{equation}where we take the sequence of bounded Borel measurable functions $\p_m(t,x,r)=\p(t,x,r)$, $\c_m(t,x,r)=\c(t,x,r)$ and $g_m(t,x,r)=g(t,x,r)$, for $|r|\leq m$ and  $\p_m(t,x,r)=\c_m(t,x,r)=g_m(t,x,r)=0$, for $|r|\geq m+1$ such that they are globally Lipschitz in $r\in\mathbb{R}$. The functions $\p_m(t,x,r), \ \c_m(t,x,r)$ and $g_m(t,x,r)$ inherits the properties of $\p(t,x,r),\ \c(t,x,r)$, and $g(t,x,r)$, respectively whenever $|r|\leq m$. Let us consider a bounded and smooth sequence $\{u^0_m\}_{m\in\N}$ converging to $u^0$ in the space $\L^p([0,1])$. We have the following mild formulation to the system \eqref{E19}:
	\begin{align}\label{E20}\nonumber
		u_{\e,m}(t,x)&=\int_0^1G(t,x,y)u^0_m(y)\d y+\frac{\alpha}{\delta+1}\int_0^t\int_0^1\frac{\partial G}{\partial y}(t-s,x,y)\p_m(u_{\e,m}(s,y))\d y\d s\\&\nonumber\quad +\beta\int_0^t\int_0^1G(t-s,x,y)\c_m(u_{\e,m}(s,y))\d y\d s\\&\nonumber\quad+ \sqrt{\e}
		\int_0^t\int_0^1G(t-s,x,y)g_m(s,y,u_{\e,m}(s,y))\W^\Q(\d s,\d y)\\&=: 
		I_{m}^1+\frac{\alpha}{\delta+1}I_{\e,m}^2+\beta I_{\e,m}^3+\sqrt{\e}I_{\e,m}^4.
	\end{align}

	Let us now prove a uniform energy estimate which plays a crucial role to obtain the global solution to the integral equation \eqref{2.2}.
	\begin{lemma}\label{lemUE}
		For each $m\in\N, \ p\geq2\delta+1$, there exists a constant $C>0$ such that
		\begin{align}\label{UE1}\nonumber
			\mathbb{E}\bigg[\sup_{t\in[0,T]}\|u_{\e,m}(t)\|^{p}_{\L^p}+&\nu p(p-1)\int_0^T\||u_{\e,m}(s)|^{\frac{p-2}{2}}\partial_x u_{\e,m}(s)\|_{\L^2}^2\d s+p\beta\int_0^T\|u_{\e,m}(s)\|_{\L^{p+2\delta}}^{p+2\delta}\d s\bigg]\\&\leq C\big(1+\|u^{0}\|_{\L^p}^{p}\big).
		\end{align}
	\end{lemma}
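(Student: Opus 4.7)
The plan is to apply the infinite-dimensional It\^o formula to $F(u)=\|u\|_{\L^p}^p$ for the process $u_{\e,m}$, process the drift via integration by parts, control the stochastic contributions via the UMD BDG inequality \eqref{18}, and close the estimate by Gronwall's lemma. Since $\p_m,\c_m,g_m$ are globally bounded and Lipschitz and $u^0_m$ is smooth, $u_{\e,m}$ can be realised as a variational/Galerkin limit regular enough to justify the expansion
\[
\|u_{\e,m}(t)\|_{\L^p}^p=\|u^0_m\|_{\L^p}^p+p\int_0^t\!\!\int_0^1|u_{\e,m}|^{p-2}u_{\e,m}\,\d u_{\e,m}\,\d x+\tfrac{p(p-1)\e}{2}\int_0^t\!\!\int_0^1|u_{\e,m}|^{p-2}\sum_{j\in\N}q_j^2\varphi_j^2(x)g_m^2\,\d x\,\d s,
\]
with the last term being the It\^o correction induced by the noise \eqref{15}.

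First I would process the deterministic drift. Integration by parts against $p|u_{\e,m}|^{p-2}u_{\e,m}$ converts the diffusion contribution into $-\nu p(p-1)\int_0^t\||u_{\e,m}|^{(p-2)/2}\partial_x u_{\e,m}\|_{\L^2}^2\,\d s$, giving the first dissipative quantity on the LHS of \eqref{UE1}. For the convection term, writing $\partial_x\p_m(u_{\e,m})=(\delta+1)u_{\e,m}^\delta\partial_x u_{\e,m}$ and multiplying by $u_{\e,m}|u_{\e,m}|^{p-2}$ shows that the integrand $(\delta+1)u_{\e,m}^{\delta+1}|u_{\e,m}|^{p-2}\partial_x u_{\e,m}=\partial_x H(u_{\e,m})$ with $H'(r)=(\delta+1)r^{\delta+1}|r|^{p-2}$, so its $x$-integral vanishes under the Dirichlet boundary conditions. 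For the reaction term, expanding $\c_m(u_{\e,m})=(1+\gamma)u_{\e,m}^{\delta+1}-u_{\e,m}^{2\delta+1}-\gamma u_{\e,m}$ and pairing with $p|u_{\e,m}|^{p-2}u_{\e,m}$ produces the favourable contribution $-p\beta\|u_{\e,m}\|_{\L^{p+2\delta}}^{p+2\delta}$ together with the crossover term $p\beta(1+\gamma)\int u_{\e,m}^{\delta+2}|u_{\e,m}|^{p-2}\,\d x$; Young's inequality $|r|^{p+\delta}\leq\tfrac12|r|^{p+2\delta}+C|r|^p$ absorbs the latter into the good term, leaving only a $C\|u_{\e,m}\|_{\L^p}^p$ remainder.

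Next I would handle the stochastic contributions. The uniform bound $\sum_{j\in\N}q_j^2\varphi_j^2(x)\leq 2\sum_{j\in\N}q_j^2<\infty$, the linear growth in Hypothesis \ref{hyp1}, and the elementary inequality $|r|^{p-2}(1+|r|)^2\leq C(1+|r|^p)$ together bound the It\^o correction by $C\int_0^t(1+\|u_{\e,m}(s)\|_{\L^p}^p)\,\d s$. Taking the supremum over $t\in[0,T]$, applying \eqref{18} to the martingale $M_t$, estimating its quadratic variation by the same ingredients and splitting $\|u_{\e,m}\|_{\L^p}^{2p}=\|u_{\e,m}\|_{\L^p}^p\cdot\|u_{\e,m}\|_{\L^p}^p$ before using Young's inequality yields
\[
\E\sup_{0\leq t\leq T}|M_t|\leq \tfrac12\E\sup_{0\leq t\leq T}\|u_{\e,m}(t)\|_{\L^p}^p+C\E\int_0^T\bigl(1+\|u_{\e,m}(s)\|_{\L^p}^p\bigr)\,\d s.
\]
Absorbing the half supremum on the left and invoking Gronwall's lemma delivers the asserted bound uniformly in $m$, with both dissipative integrals preserved on the LHS since the resulting constant depends on $\|u^0\|_{\L^p}$ only through the convergent approximation $\|u^0_m\|_{\L^p}$.

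The main obstacle is the rigorous application of It\^o's formula to the mild solution, since $F(u)=\|u\|_{\L^p}^p$ is smooth on $\L^p$ but not on $\L^2$, and the mild formulation \eqref{E20} does not directly supply the second-order term $\partial_x^2 u_{\e,m}$ appearing in the drift. I would bypass this via a Galerkin projection onto $\mathrm{span}\{\varphi_1,\dots,\varphi_N\}$, apply the finite-dimensional It\^o formula to the projected solutions, derive the identical estimate uniformly in $N$, and then pass to the limit in the spirit of \cite{IGCR,QCHG}. The convection-term computation is the second delicate step, but in every case reduces to integrating a perfect $x$-derivative of a power of $|u_{\e,m}|$ which vanishes at $x=0,1$.
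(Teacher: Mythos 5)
Your proposal is correct and follows essentially the same route as the paper: Itô's formula for $|\cdot|^p$ integrated over $[0,1]$, vanishing of the convection term as a perfect $x$-derivative (the paper phrases this as $(\p(u),|u|^{p-2}u)=0$), Young's inequality to absorb the $(1+\gamma)u^{\delta+2}$ crossover into the $\L^{p+2\delta}$ dissipation, linear growth for the Itô correction, BDG plus the splitting $\|u\|_{\L^p}^{2p}=\|u\|_{\L^p}^p\cdot\|u\|_{\L^p}^p$ to absorb half the supremum, and Gronwall. The only cosmetic differences are that the paper justifies the Itô expansion by citing the pointwise-in-$x$ formula of Lemma 4.2 in \cite{IG} rather than a Galerkin projection, and uses the scalar BDG inequality for the real-valued martingale rather than the UMD version \eqref{18}; neither affects the argument.
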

	\begin{proof}Applying It\^{o} formula to the function  $|\cdot|^p$ for the process $u_{\e,m}(\cdot,x)$ for $x\in(0,1)$ and then taking integration over the spatial domain $[0,1]$ (see Lemma 4.2, \cite{IG}), we deduce for all $t\in[0,T]$
		\begin{align}
			\|u_{\e,m}(t)\|_{\L^p}^{p}&\leq \|u^0_m\|_{\L^p}^{p}-\nu p(p-1)\int_{0}^{t}\int_{0}^{1}|u_{\e,m}(s,x)|^{p-2}\left|\frac{\partial}{\partial x}u_{\e,m}(s,x)\right|^2 \d x \d s\nonumber\\
			&\quad+\frac{p(p-1)\alpha}{\delta+1}\int_{0}^{t}\int_{0}^{1}|u_{\e,m}(s,x)|^{p-2}\frac{\partial}{\partial x}u_{\e,m}(s,x) \p_m(u_{\e,m}(s,x))\d x \d s\nonumber\\
			&\quad+p\beta\int_{0}^{t}\int_{0}^{1}|u_{\e,m}(s,x)|^{p-2}u_{\e,m}(s,x)\c_m(u_{\e,m}(s,x))\d x\d s\nonumber\\
			&\quad+p\sqrt{\e}\sum_{j\in\N}\int_{0}^{t}\int_{0}^{1}|u_{\e,m}(s,x)|^{p-2}u_{\e,m}(s,x)g_m(s,x,u_{\e,m}(s,x))q_j\varphi_j(x)\d x \d\beta_j(s)\nonumber\\
			&\quad+\frac{\e}{2}p(p-1)\sum_{j\in\N}\int_{0}^{t}\int_{0}^{1}|u_{\e,m}(s,x)|^{p-2}g_m^2(s,x,u_{\e,m}(s,x))q_j^2\varphi_j^2(x)\d x\d s\nonumber\\
			&\leq\|u^0_m\|_{\L^p}^{p}+I_1(t)+I_2(t)+I_3(t)+I_4(t)+I_5(t),\ \mathbb{P}\text{-a.s.}
		\end{align}
		For $I_1(t)$, we have		  
		\begin{align}
			I_1(t)=-\nu p(p-1)\int_0^t\||u_{\e,m}(s)|^{\frac{p-2}{2}}\partial_x u_{\e,m}(s)\|_{\L^2}^2\d s.
		\end{align}Using the fact that $\big(\p(u),|u|^{p-2}u\big)=0$ (\cite{MTMSGBH}), we obtain $I_2=0$.
		
		Next, we consider the term $I_3(\cdot)$ and estimate it using H\"older's and Young's inequalities as
		\begin{align}\label{UE34}\nonumber
			&I_3(t)\\&\nonumber:=p\beta\int_{0}^{t}\int_{0}^{1}|u_{\e,m}(s,x)|^{p-2}u_{\e,m}(s,x)\c_m(u_{\e,m}(s,x)\d x\d s\\
			&\nonumber=p\beta\int_{0}^{t}\int_{0}^{1} \big((1+\gamma)|u_{\e,m}(s,x)|^{p-2}(u_{\e,m}(s,x))^{\delta+2}-\gamma|u_{\e,m}(s,x)|^p
			\nonumber\\&\qquad-|u_{\e,m}(s,x)|^{p-2}(u_{\e,m}(t,x))^{\delta+2}\big)\d x \d s\nonumber\\
			&\nonumber\leq p\beta \int_{0}^{t}\big((1+\gamma)\|u_{\e,m}(s)\|^{p+\delta}_{\L^{p+\delta}}-\gamma\|u_{\e,m}(s)\|^{p}_{\L^p}-\|u_{\e,m}(s)\|^{p+2\delta}_{\L^{p+2\delta}}\big) \d s\\
			&\nonumber\leq p\beta \int_{0}^{t}\big((1+\gamma)\|u_{\e,m}(s)\|^{\frac{p+2\delta}{2}}_{\L^{p+2\delta}}\|u_{\e,m}(s)\|^{\frac{p}{2}}_{\L^p}	-\gamma\|u_{\e,m}(s)\|^{p}_{\L^p}-\|u_{\e,m}(s)\|^{p+2\delta}_{\L^{p+2\delta}}\big) \d s\\
			&\nonumber\leq p\beta \int_{0}^{t}\left(\frac12\|u_{\e,m}(s)\|^{{p+2\delta}}_{\L^{p+2\delta}}+\frac{(1+\gamma)^2}{2}\|u_{\e,m}(s)\|^{p}_{\L^p}	-\gamma\|u_{\e,m}(s)\|^{p}_{\L^p}-\|u_{\e,m}(s)\|^{p+2\delta}_{\L^{p+2\delta}}\right) \d s\\
			&\leq p\beta \int_{0}^{t}\left(-\frac12\|u_{\e,m}(s)\|^{{p+2\delta}}_{\L^{p+2\delta}}+\frac{(1+\gamma^2)}{2}\|u_{\e,m}(s)\|^{p}_{\L^p}\right) \d s.
		\end{align}
		Using Hypothesis \ref{hyp1}, we obtain
		\begin{align}
			|I_5(t)|&\leq C(\e,p)\int_{0}^{t}\int_{0}^{1}|u_{\e,m}(s,x)|^{p-2}g_m^2(s,x,u_{\e,m}(s,x))x\d x\d s\nonumber\\&\leq C(\e,p)\int_0^t\int_{0}^{1}|u_{\e,m}(s,x)|^{p-2}\big(1+|u_{\e,m}(s,x))|^2\big)\d x\d s\nonumber\\&\leq C(\e,p,T)\left(1+\int_{0}^{t}\|u_{\e,m}(s)\|^{p}_{\L^p}\d s\right).
		\end{align}
		In order to estimate $I_4(t)$ we  apply BDG (see Theorem 1.1.7, \cite{SVLBLR}),  H\"older's and Young's inequalities to get 
		\begin{align}
			\mathbb{E}&\bigg[\sup_{t\in[0,T]}\bigg|p\sqrt{\e}\sum_{j\in\N}\int_{0}^{t}\int_{0}^{1}|u_{\e,m}(s,x)|^{p-2}u_{\e,m}(s,x)g_m(s,y,u_{\e,m}(s,x))q_j\varphi_j(x)\d x \d\beta_j(s)\bigg|\bigg]\nonumber\\
			&\leq C(p,\e)	\mathbb{E}\left[\sum_{j\in\N}\int_{0}^{T}\left(\int_{0}^{1}g_m(s,x,u_{\e,m}(s,x))|u_{\e,m}(s,x)|^{p-1}q_j\varphi_j(x)\d x\right)^2\d s\right]^{1/2}\nonumber\\
			&\leq C(p,K,\e)	\mathbb{E}\left[\sum_{j\in\N}\int_{0}^{T}\left(\int_{0}^{1}(1+|u_{\e,m}(s,x)|^{p}) q_j\varphi_j(x)\d x\right)^2\d s\right]^{1/2}\nonumber\\
			&\leq C(p,K,\e,,T)\nonumber\\&\qquad+C(p,K,\e)\E\left[\int_{0}^{T}\bigg(\int_{0}^{1}|u_{\e,m}(s,x)|^{p}\d x\bigg)\bigg(\sum_{j\in\N}q_j^2\int_0^1|u_{\e,m}(s,x)|^{p}
			\varphi_j^{2}(x)\d x\bigg)\d s\right]^{1/2}\nonumber\\
			&\leq C(p,K,\e,T)+C(p,K,\e)\E\left[\int_{0}^{T}\|u_{\e,m}(s)\|_{\L^p}^{2p}\d s\right]^{1/2}\nonumber\\&\leq  C(p,K,\e,T)+C(p,K,\e)\E\left[\sup_{s\in[0,T]}\|u_{\e,m}(s)\|^{\frac{p}{2}}_{\L^p}\left(\int_{0}^{T}\|u_{\e,m}(s)\|_{\L^p}^p
		 \d s\right)^{1/2}\right] \nonumber\\
			&\leq C(p,K,\e,T)+\frac12\mathbb{E}\bigg[\sup_{s\in[0,T]}\|u_{\e,m}(s)\|^{p}_{\L^p}\bigg]
			+C(p,K,\e)\int_{0}^{T}\mathbb{E} \big[\|u_{\e,m}(s)\|^{p}_{\L^p}\big]\d s.
		\end{align}
		Combining  the estimates for $I_1(t)$-$I_5(t)$, we deduce 
		\begin{align*}
			\mathbb{E}\bigg[\sup_{ t\in[0,T]}\|u_{\e,m}(t)\|^{p}_{\L^p}&+2\nu p(p-1)\int_0^T\||u_{\e,m}(s)|^{\frac{p-2}{2}}\partial_x u_{\e,m}(s)\|_{\L^2}^2\d s+p\beta\int_0^T\|u_{\e,m}(s)\|_{\L^{p+2\delta}}^{p+2\delta}\d s\bigg]\\& \leq 2\|u^0_m\|_{\L^p}^p+ C(\e,\alpha,\beta,\gamma,\delta,K,T,p) \int_0^T \E\big[\|u_{\e,m}(s)\|^{p}_{\L^p}\big]\d s.
		\end{align*}Applying Gronwall's inequality in the above inequality, we conclude
		\begin{align*}
			\mathbb{E}\bigg[\sup_{ t\in[0,T]}\|u_{\e,m}(t)\|^{p}_{\L^p}+2\nu p(p-1)&\int_0^T\||u_{\e,m}(s)|^{\frac{p-2}{2}}\partial_x u_{\e,m}(s)\|_{\L^2}^2\d s+p\beta\int_0^T\|u_{\e,m}(s)\|_{\L^{p+2\delta}}^{p+2\delta}\d s\bigg]
			\\&\leq C\big(1+\|u^0_m\|_{\L^p}^{p}\big),
		\end{align*}which completes the proof. 
	\end{proof}

The next task is to prove the tightness of $ u_{\e,m} $. The tightness of the terms $I_{\e,m}^2$ and $I_{\e,m}^3$, of \eqref{E20} hold from Lemmas \ref{lemma3.1}, \ref{lemma3.2}  and \ref{lemma3.6} in the space $\C([0,T];\L^p([0,1]))$, for $p\geq 2\delta+1$. Let us establish the tightness of the final term $I_{\e,m}^4$ in the right hand side of \eqref{E20}. 
	\begin{lemma}\label{lemE2}
		Assume that the initial data $u^0\in\L^p([0,1])$, for $p>6$, then the stochastic convolution term $I_{\e,m}^4$ is uniformly tight in $\C([0,T]\times[0,1])$.
	\end{lemma}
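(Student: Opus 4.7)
The plan is to apply a two-parameter Kolmogorov--Chentsov criterion on the compact domain $[0,T]\times[0,1]$ and deduce tightness in $\C([0,T]\times[0,1])$ via the compact embedding of H\"older spaces. Concretely, I will derive uniform (in $m$) moment bounds of the form
\begin{align*}
\sup_{m\in\mathbb{N}}\mathbb{E}\bigl[|I_{\e,m}^4(t_1,x_1)-I_{\e,m}^4(t_2,x_2)|^{q}\bigr]\leq C\bigl(|t_1-t_2|^{\theta_1 q}+|x_1-x_2|^{\theta_2 q}\bigr)
\end{align*}
for some exponent $q\leq p$ and $\theta_1,\theta_2>0$ satisfying $\theta_i q>1$ for $i=1,2$. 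The hypothesis $p>6$ is used precisely to ensure that, after paying the toll of the heat kernel singularity, one can still choose $q$ large enough to make both $\theta_i q$ strictly greater than $1$ (equivalently, the joint two-parameter exponent exceeds $2=\dim([0,T]\times[0,1])$), so that Kolmogorov--Chentsov applies.

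First I would handle the time increment: for $0\leq t_1<t_2\leq T$ and a fixed $x\in[0,1]$, decompose
\begin{align*}
I_{\e,m}^4(t_2,x)-I_{\e,m}^4(t_1,x)=J_1+J_2,
\end{align*}
where $J_1=\int_{t_1}^{t_2}\int_0^1 G(t_2-s,x,y)g_m(s,y,u_{\e,m}(s,y))\W^\Q(\d s,\d y)$ captures the new stochastic mass on $[t_1,t_2]$, and $J_2=\int_0^{t_1}\int_0^1[G(t_2-s,x,y)-G(t_1-s,x,y)]g_m(s,y,u_{\e,m}(s,y))\W^\Q(\d s,\d y)$ carries the heat kernel correction. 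Applying BDG to each of these real-valued stochastic integrals (via the scalar BDG inequality, since $(t,x)$ is now fixed) produces, via Parseval and the assumption $\sum_j q_j^2<\infty$, an estimate in terms of $\int\int G^2g_m^2$ and $\int\int[\Delta_t G]^2 g_m^2$ respectively. Plugging in the linear-growth bound on $g_m$ from Hypothesis \ref{hyp1}, exploiting the Green's function bounds from Appendix \ref{SUR}, and using the uniform $\L^p$ estimate of Lemma \ref{lemUE} to tame the $u_{\e,m}$-dependence gives $\mathbb{E}[|J_i|^{q}]\leq C|t_2-t_1|^{\theta_1 q}$ for a suitable $\theta_1>0$.

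The spatial increment $I_{\e,m}^4(t,x_1)-I_{\e,m}^4(t,x_2)$ is handled analogously, using this time the spatial H\"older regularity of the Dirichlet heat kernel $|G(t-s,x_1,y)-G(t-s,x_2,y)|$, the same BDG step, and again the bounds from Hypothesis \ref{hyp1} together with Lemma \ref{lemUE}. This yields $\mathbb{E}[|I_{\e,m}^4(t,x_1)-I_{\e,m}^4(t,x_2)|^q]\leq C|x_1-x_2|^{\theta_2 q}$.

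With the two one-variable bounds in hand, a standard interpolation gives the joint two-parameter estimate displayed above. The two-parameter Kolmogorov--Chentsov theorem then produces a modification that is jointly H\"older continuous with exponents strictly less than $\theta_1-1/q$ and $\theta_2-1/q$, and the corresponding H\"older seminorm is bounded in $L^q(\Omega)$ uniformly in $m$. Since any such H\"older ball is relatively compact in $\C([0,T]\times[0,1])$ by Arzel\`a--Ascoli, an application of Markov's inequality delivers the desired tightness. The main technical obstacle is tracking the sharp kernel exponents: the kernel $G^2$ has an integrable but critical singularity in time, and because $g_m$ has only linear (not bounded) growth in $u$, one cannot pull out an $\L^\infty$ bound on $u_{\e,m}$, so one must balance the H\"older exponent of the kernel, the $\L^p$ norm of $u_{\e,m}$ allowed by Lemma \ref{lemUE}, and the moment $q$ in BDG; this is precisely where the hypothesis $p>6$ enters.
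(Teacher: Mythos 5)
Your plan follows essentially the same route as the paper: fix $(t,x)$, apply the scalar BDG inequality to the stochastic convolution, separate the Green's function from $g_m$ by H\"older, use the linear growth of $g_m$ together with the uniform bound of Lemma \ref{lemUE}, and derive moment estimates for the time and space increments of the kernel (the paper gets $h^{\frac12-\frac3p}$ and $k^{\frac14-\frac{3}{2p}}$, after first splitting $G=\vi p+H$ into the whole-line heat kernel plus a smooth remainder, a reduction you omit but which is harmless). The one substantive discrepancy is in the final step. You claim that $p>6$ lets you choose $q\le p$ with both $\theta_1 q>1$ and $\theta_2 q>1$ so that the two-parameter Kolmogorov--Chentsov theorem yields a uniform $L^q$ bound on a H\"older seminorm, and then conclude by Arzel\`a--Ascoli plus Markov. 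With the actual exponents this reads $\theta_2 q=\frac q2-3>1$ (needs $q>8$) and $\theta_1 q=\frac q4-\frac32>1$ (needs $q>10$), so your route, taken literally, requires $p>10$, not $p>6$; the hypothesis $p>6$ only guarantees that the exponents $\theta_i$ are positive. The paper acknowledges exactly this: it invokes Kolmogorov's continuity theorem for the H\"older modification only when $p>10$, and for the tightness claim at $p>6$ it instead verifies Billingsley's Theorem 7.3 (uniform boundedness at a point plus vanishing of the modulus of continuity in probability under the parabolic metric), which needs only positivity of the increment exponents rather than a quantitative H\"older seminorm bound. So either strengthen the hypothesis in your argument to $p>10$, or replace the H\"older-ball compactness step by the Billingsley/Aldous modulus-of-continuity criterion as the paper does.
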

\begin{proof}The proof of this lemma is based the Arzela-Ascoli theorem and Aldous' tightness criterion. First, we prove that the stochastic term  $I_{\e,m}^4$ is uniformly bounded. We need the following fact about  Green's function $G$ given in \eqref{19} (see \cite{JBW})
	\[G(t,x,y)=\vi{p}(t,x,y)+H(t,x,y),\]
	where $H(t,x,y)$ is a smooth function in $(t,x,y)$ and $\vi{p}$ is the heat kernel defined as
	\[\vi{p}(t,x,y)=\frac{1}{\sqrt{4\pi t}}\exp\left(\frac{-(x-y)^2}{4t}\right).\]
	Using BDG inequality (Theorem 1.1.7, \cite{SVLBLR}), H\"older's inequality (with conjugate exponents $1\leq p_1< \frac{3}{2}$ and $1\leq q_1\leq \frac{p}{2}$), Lemma \ref{lemUE} and Equation (3.13) in  \cite{JBW}, we find for any $ t\in[0,T]$
	\begin{align}\label{T1}\nonumber
	&	\E\bigg[\bigg|\int_0^t\int_0^1 G(t-s,x,y)g(s,y,u_{\e,m}(s,y))\W^\Q(\d s,\d y)\bigg|^p\bigg] \\&\nonumber\leq C\E\bigg[\bigg(\int_0^t\int_0^1G^2(t-s,x,y)g^2(s,y,u_{\e,m}(s,y))\d y\d s\bigg)^{\frac{p}{2}}\bigg]\\&\nonumber\leq C\bigg(\int_0^t\int_0^1 G^{2p_1}(t-s,x,y)\d y\d s\bigg)^{\frac{p}{2p_1}}\E\bigg[\bigg(\int_0^t\int_0^1 g^{2q_1}(s,y,u_{\e,m}(s,y))\d y \d s\bigg)^\frac{p}{2q_1}\bigg]\\
	&\nonumber\leq 
	C(K)t^{\frac{(3-2q_1)p}{4q_1}+\frac{p}{2q_1}-1}\E\bigg[\int_0^t\int_0^1 g^{p}(s,y,u_{\e,m}(s,y))\d y \d s\bigg]\\&\nonumber\leq 
	C(p,K,T)\bigg(1+\E\bigg[\sup_{t\in[0,T]}\|u_{\e,m}(s)\|_{\L^p}^p\bigg]\bigg)
	\\&\leq C(p,K,T,u^0),
	\end{align}
for all $m\in\N$. Note that $I_{\e,m}^4(x,t)$ can be written as 
\begin{align}\label{228}\nonumber
	I_{\e,m}^4(x,t)&=	\int_0^t\int_0^1\tilde{p}(t-s,x,y)g_m(s,y,u_{\e,m}(s,y))\W^\Q(\d s,\d y)\nonumber\\&\nonumber\quad+\int_0^t\int_0^1H(t-s,x,y)g_m(s,y,u_{\e,m}(s,y))\W^\Q(\d s,\d y)\\&=:I_{\e,m}^{4,1}(x,t)+I_{\e,m}^{4,2}(x,t),
\end{align}
for $(x,t)\in[0,1]\times[0,T]$. The second term on the right hand side of \eqref{228}  is a convolution of $\W^{\Q}$ with a smooth function $H$. It can also shown to be smooth and hence we consider the first term only. 
 It can be easily seen that 
\begin{align}\label{TE1}\nonumber
	&\big\{\E\big[\big|I_{\e,m}^{4,1}(x+h,t+k)-I_{\e,m}^{4,1}(x,t)\big|^p\big]\big\}^{\frac{1}{p}}\\&\nonumber\leq C(p)\big\{\E\big[\big|I_{\e,m}^{4,1}(x+h,t+k)-I_{\e,m}^{4,1}(x,t+k)\big|^p\big]\big\}^{\frac{1}{p}} \\&\quad +C(p)\big\{\E\big[\big|I_{\e,m}^{4,1}(x+h,t+k)-I_{\e,m}^{4,1}(x+h,t)\big|^p\big]\big\}^{\frac{1}{p}} .
\end{align} 
 Let us  consider the term $\E\big[\big|I_{\e,m}^{4,1}(x+h,t)-I_{\e,m}^{4,1}(x,t)\big|^p\big]$, and  estimate it using BDG inequality (Theorem 1.1.7, \cite{SVLBLR}), H\"older's inequality (with conjugate  exponents $\frac{p}{2}$ and $q:=\frac{p}{p-2}$), \eqref{E9} and Lemma \ref{lemUE}, as
\begin{align}\label{TE2}\nonumber
	&\E\big[\big|I_{\e,m}^{4,1}(x+h,t)-I_{\e,m}^{4,1}(x,t)\big|^p\big] \\&\nonumber\leq C(p) \E\bigg[\bigg|\int_0^t\int_0^1\big(\vi{p}(t-s,x+h,y)-\vi{p}(t-s,x,y)\big)^2g^2(s,y,u_{\e,m}(s,y))\d y\d s\bigg|^\frac{p}{2}\bigg]\\&\nonumber\leq 
	C(p)\E\bigg[\int_0^t\int_0^1g^p(s,y,u_{\e,m}(s,y))\d y\d s\bigg]\\&\qquad\times\nonumber\bigg[\int_0^t\int_0^1\big|\vi{p}(t-s,x+h,y)-\vi{p}(t-s,x,y)\big|^{2q}\d y\d s\bigg]^{\frac{p}{2q}}\\&\leq C(p,K,T,u^0)\bigg[\int_0^t\int_{0}^{1}\frac{1}{(4\pi (t-s))^{q}}\big|e^{\frac{-(y-x-h)^2}{4(t-s)}}-e^{-\frac{(y-x)^2}{4(t-s)}}\big|^{2q}\d y\d s\bigg]^\frac{p}{2q}.
\end{align}Fix $y-x=hz$ and $t-s=h^2 \ell$, in the above inequality, to find 
\begin{align}\label{TE3}\nonumber
	&\E\big[\big|I_{\e,m}^{4,1}(x+h,t)-I_{\e,m}^{4,1}(x,t)\big|^p\big] \\&\nonumber\leq C(p,K,T,u^0) \bigg[h^{3-2q}\int_0^{t}\int_0^\infty \ell^{-q}\big|e^{-\frac{(z-1)^2}{4\ell}}-e^{-\frac{z^2}{4\ell}}\big|^{2q}\d z \d \ell\bigg]^{\frac{p}{2q}} \\&\leq C(p,K,T,u^0)h^{\frac{p}{2}-3},
	\end{align}provided $q< \frac{3}{2}$, which implies $p>6$. Thus, we conclude that 
\begin{align}\label{TE03}
	\big\{\E\big[\big|I_{\e,m}^{4,1}(x+h,t+k)-I_{\e,m}^{4,1}(x,t+k)\big|^p\big]\big\}^{\frac{1}{p}}\leq C(p,K,T,u^0)h^{\frac{1}{2}-\frac{3}{p}}, \ \ \text{ for } \ \ p>6.
\end{align}

Let us now consider the second term of the inequality \eqref{TE1}. Applying BDG inequality (Theorem 1.1.7, \cite{SVLBLR}), we find
\begin{align}\label{TE4}\nonumber
	&\big\{\E\big[\big|I_{\e,m}^{4,1}(x,t+k)-I_{\e,m}^{4,1}(x,t)\big|^p\big]\big\}^{\frac{1}{p}} \\&\nonumber\leq C(p)\bigg\{\E\bigg[\bigg|\int_0^t\int_0^1\big(\vi{p}(t+k-s,x,y)-\vi{p}(t-s,x,y)\big)^2g^2(s,y,u_{\e,m}(s,y))\d y\d s\bigg|^{\frac{p}{2}}\bigg]\bigg\}^{\frac{1}{p}} \\&\nonumber\quad+ C(p)\bigg\{\E\bigg[\bigg|\int_t^{t+k}\int_0^1\vi{p}^2(t+k-s,x,y)g^2(s,y,u_{\e,m}(s,y))\d y\d s\bigg|^{\frac{p}{2}}\bigg]\bigg\}^{\frac{1}{p}} \\
	&=:J_1+J_2.
\end{align}We consider  the term $J_1$ of above inequality, and estimate it  using H\"older's inequality and similar arguments as in \eqref{TE2} by putting $t-s=k \ell$, $y-x=\sqrt{k}z$,  as
\begin{align}\label{TE5}\nonumber
	J_1&\leq C(p)\bigg\{ \E\bigg[\int_0^t\int_0^1 g^p(s,y,u_{\e,m}(s,y))\d y\d s\bigg]\bigg\}^{\frac{1}{p}}\\&\nonumber\quad\times\bigg[\int_0^t\int_{0}^{\infty} \big|
	(k\ell+k)^{-\frac{1}{2}}e^{-\frac{z^2}{4(\ell+1)}}-(k\ell)^{-\frac{1}{2}}e^{-\frac{z^2}{4\ell}}\big|^{2q}\d z\d \ell\bigg]^{\frac{1}{2q}}\\&\nonumber\leq C(p,K,T,u^0)\bigg[k^{\frac{3}{2}-q}\int_0^t\int_{0}^\infty\big|(\ell+1)^{-\frac{1}{2}}e^{-\frac{z^2}{4(\ell+1)}}-\ell^{-\frac{1}{2}}e^{-\frac{z^2}{4\ell}}\big|^{2q}\d z\d \ell\bigg]^\frac{1}{2q} \\&\leq  C(p,K,T,u^0) k^{\frac{1}{4}-\frac{3}{2p}}, 
\end{align}provided $q<\frac{3}{2}$ which implies $p>6$. Finally, we consider the term $J_2$ of \eqref{TE4} and estimate it using H\"older's inequality and Lemma \ref{lemUE} as
\begin{align}\label{TE6}\nonumber
	J_2&\leq C(p)\E\bigg[\int_t^{t+k}g^p(s,y,u_{\e,m}(s,y))\d y\d s\bigg]^{\frac{1}{p}}\bigg[\int_0^k\int_0^1\vi{p}^{2q}(t-s,x,y)\d y\d s\bigg]^{\frac{1}{2q}}\\&\nonumber\leq C(p,K,T,u^0)k^{\frac{1}{p}}\bigg[\int_0^k\int_{0}^{\infty}(t-s)^{-q}e^{-\frac{q(y-x)^2}{t-s}}\d y\d s\bigg]^{\frac{1}{2q}} \\&\leq C(p,K,T,u^0) k^{\frac{1}{4}-\frac{1}{2p}}, \ \ \text{ for } \ \ p>6.
\end{align}Combining \eqref{TE1}-\eqref{TE6}, we obtain for $p>6$
\begin{align}\label{236}
	\left\{\E\big[\big|I_{\e,m}^{4,1}(x+h,t+k)-I_{\e,m}^{4,1}(x,t)\big|^p\big]\right\}^\frac{1}{p} &\leq C\left[h^{\frac{1}{2}-\frac{3}{p}}+k^{\frac{1}{4}-\frac{3}{2p}}\right], 
\end{align}
For  $p>10$, using Kolmogorov's continuity Theorem  (see Corollary 1.2, \cite{JBW}) we obtain a modification  of $I_{\e,m}^{4,1}(\cdot,\cdot)$ with $\mathbb{P}$-almost all trajectories being H\"older continuous.

By an application of Markov's inequality and using uniform bound of the stochastic term (cf. \eqref{T1}), we have
\begin{align*}
	\lim_{M\to\infty} \limsup_{m\in\N}\P\bigg(|I_{\e,m}^4(t,x)|\geq M\bigg)\leq\lim_{M\to\infty}\frac{C(p,K,T,u^0)}{M^p}=0.
\end{align*}
for all $(t,x)\in[0,T]\times[0,1]$. Furthermore, under the parabolic metric $|(t,x)|=|t|^\frac{1}{2}+|x|$, the condition \eqref{236} implies for any $\e_1>0$, there exists an $\eta(\e_1)=\e_1^{\frac{3+\tilde{\delta}}{p}-\frac{1}{2}},$ for some $\tilde{\delta}>\frac{p}{2}-3>0$  such that 
\begin{align*}
\lim_{\eta\to 0}	\limsup_{m\in\N}\P\bigg(\sup_{ |(t-s,x-y)|\leq\eta}|I_{\e,m}^4(t,x)-I_{\e,m}^4(s,y)|\geq \e_1 \bigg)\leq \lim_{\eta\to 0}\frac{C^p\eta^{\frac{p}{2}-3}}{\e_1^p} =C^p\lim_{\eta\to 0}\eta^{\tilde{\delta}}=0,
\end{align*}
for all $(t,x)\in[0,T]\times[0,1]$.  Hence, by an application of Theorem 7.3, \cite{PB} yields that  $I_{\e,m}^4$ is tight in the space $\C([0,T]\times[0,1])$.
\end{proof}

Combining the tightness obtained above, we conclude that the sequence of the processes \begin{align*}u_{\e,m}=	I_{m}^1+\frac{\alpha}{\delta+1}I_{\e,m}^2+\beta I_{\e,m}^3+\sqrt{\e}I_{\e,m}^4
	\end{align*}
	is uniformly tight in $\C([0,T];\L^p([0,1]))$ for all $m$ and $p>\max\{6,2\delta+1\}$. Let us now move to the  global well-posedness of the integral equation \eqref{2.2}. A similar result can be found in \cite{IG} (cf. \cite{IGCR,AKMTM3} also).
	\begin{theorem}\label{thrmE2}
		Assume that Hypothesis \ref{hyp1} holds and the initial data $u^0\in\L^p([0,1])$, for $p>\max\{6,2\delta+1\}$. Then, there exists a unique $\L^p([0,1])$-valued $\mathscr{F}_t$-adapted continuous process $u_\e(\cdot)$    satisfying the integral equation \eqref{2.2} such that 
		\begin{align}\label{E31}
			\E\bigg[\sup_{t\in[0,T]}\|u_\e(t)\|_{\L^p}^p\bigg]\leq C(T).
		\end{align}
	\end{theorem}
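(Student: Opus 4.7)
The strategy I would follow is to build the global mild solution as a limit of the approximating family $\{u_{\e,m}\}_{m\in\N}$ introduced in \eqref{E19}--\eqref{E20}. Because $\p_m,\c_m,g_m$ are bounded and globally Lipschitz in $r$, the existence of $u_{\e,m}$ itself follows by a direct Banach fixed point argument in $\C([0,T];\L^p([0,1]))$ (the same scheme used in Theorem \ref{thrmE1}, but now with no truncation needed since the nonlinearities are already bounded and the contraction estimates \eqref{E12}--\eqref{E17} remain valid with constants independent of $R$). The key gain is the energy bound of Lemma \ref{lemUE}, which is \emph{uniform in $m$} and depends only on $\|u^0\|_{\L^p}$ and $T$.

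The next step is tightness. For $I_{m}^1$, continuity in $t$ with values in $\L^p$ is straightforward from the smoothing properties of the heat semigroup. The tightness of the deterministic convolution terms $I_{\e,m}^2$ and $I_{\e,m}^3$ in $\C([0,T];\L^p([0,1]))$ follows, as indicated in the excerpt, from the Green's function estimates in Lemmas \ref{lemma3.1}, \ref{lemma3.2}, \ref{lemma3.6} combined with the uniform $\L^p$--bound of Lemma \ref{lemUE}, while Lemma \ref{lemE2} furnishes the tightness of the stochastic convolution $I_{\e,m}^4$ in $\C([0,T]\times[0,1])$, hence in $\C([0,T];\L^p([0,1]))$ via the embedding $\C([0,T]\times[0,1])\hookrightarrow \C([0,T];\L^p([0,1]))$. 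Adding the four contributions and invoking Prokhorov's theorem, the joint laws of $(u_{\e,m},\W^\Q)$ are tight on $\C([0,T];\L^p([0,1]))\times\C([0,T];\U)$, where $\U$ is an appropriate Hilbert space in which $\W^\Q$ lives. Skorokhod's representation theorem then yields a new probability space $(\widetilde\Omega,\widetilde{\mathscr F},\widetilde\P)$, a subsequence (not relabelled), and processes $(\widetilde u_{\e,m},\widetilde \W^\Q_m)$ and $(\widetilde u_\e,\widetilde \W^\Q)$ such that $(\widetilde u_{\e,m},\widetilde \W^\Q_m)$ has the same law as $(u_{\e,m},\W^\Q)$ and converges $\widetilde\P$--almost surely in the product topology to $(\widetilde u_\e,\widetilde \W^\Q)$.

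The main obstacle, and the step I would carry out most carefully, is the passage to the limit in the mild equation on the new probability space. The deterministic convolutions against $\p_m(\widetilde u_{\e,m})$ and $\c_m(\widetilde u_{\e,m})$ are handled by combining $\widetilde\P$--a.s.\ convergence of $\widetilde u_{\e,m}$ in $\C([0,T];\L^p)$ with the uniform bound from Lemma \ref{lemUE} and the polynomial-in-$u$ growth of $\p,\c$; since $p>2\delta+1$, Vitali's convergence theorem applies once the $\L^q$--equi-integrability is secured (using Lemma \ref{lemUE} at the slightly higher exponent $p+2\delta$). The identification of the stochastic convolution is the delicate part: I would verify that the quadratic-variation-type functional
\[
M^m(t)=\int_0^1\widetilde u_{\e,m}(t,y)\phi(y)\d y - I_{\phi,m}^{\text{det}}(t)
\]
is a continuous square-integrable martingale under the filtration generated by $(\widetilde u_{\e,m},\widetilde\W^\Q_m)$, compute its cross-variation with the components of $\widetilde\W^\Q_m$, pass to the $m\to\infty$ limit using uniform integrability, and conclude that the limit coincides with the stochastic convolution in \eqref{2.2} by the martingale representation approach. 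This shows $\widetilde u_\e$ is a mild solution on $(\widetilde\Omega,\widetilde{\mathscr F},\widetilde\P)$, and the bound \eqref{E31} follows from Lemma \ref{lemUE} and Fatou's lemma.

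Finally, I would establish pathwise uniqueness, which then upgrades existence from the enlarged probability space back to $(\Omega,\mathscr F,\P)$ by the Yamada--Watanabe principle. Given two mild solutions $u_\e^{(1)},u_\e^{(2)}$ on the same basis, I would set $\tau_R^i=\inf\{t:\|u_\e^{(i)}(t)\|_{\L^p}\geq R\}\wedge T$ and work on $[0,\tau_R^{(1)}\wedge\tau_R^{(2)}]$. On this interval the equation coincides with a truncated equation of the form used in Theorem \ref{thrmE1}, and the contraction-type estimates \eqref{E10}--\eqref{E17} yield $\E[\sup_{t\leq \tau_R^{(1)}\wedge\tau_R^{(2)}}\|u_\e^{(1)}(t)-u_\e^{(2)}(t)\|_{\L^p}^p]=0$. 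The a priori bound \eqref{E31} guarantees $\P(\tau_R^{(i)}<T)\to 0$ as $R\to\infty$, so $u_\e^{(1)}=u_\e^{(2)}$ $\P$--a.s.\ on $[0,T]$, completing the proof.
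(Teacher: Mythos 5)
Your proposal is essentially sound and follows the same skeleton as the paper: the bounded-coefficient approximations $u_{\e,m}$ of \eqref{E19}--\eqref{E20}, the $m$-uniform energy bound of Lemma \ref{lemUE}, tightness of $I_m^1,I_{\e,m}^2,I_{\e,m}^3$ via Lemmas \ref{lemma3.1}, \ref{lemma3.2}, \ref{lemma3.6} and of $I_{\e,m}^4$ via Lemma \ref{lemE2}, Prokhorov plus Skorokhod, and the identical uniqueness argument (local uniqueness from Theorem \ref{thrmE1} on $[0,\tau_{1,R}\wedge\tau_{2,R}]$, then Markov's inequality and \eqref{E31} to send $R\to\infty$). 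The one place where you genuinely diverge is the final transfer of existence back to the original probability space: you invoke pathwise uniqueness together with the Yamada--Watanabe principle, and you identify the stochastic convolution in the limit by a martingale-representation argument (exhibiting $M^m$ as a square-integrable martingale and computing cross-variations with $\widetilde\W^\Q_m$). The paper instead applies the Gy\"ongy--Krylov criterion (Lemma 1.1 of \cite{IGNK}): it takes an arbitrary pair of subsequences $\{u_{\e,l}\},\{u_{\e,k}\}$, uses Skorokhod to realize an a.s.\ convergent copy of the pair, passes to the limit directly in the weak formulation \eqref{WF}, concludes from uniqueness that the two limits coincide, and deduces convergence in probability of the whole sequence $u_{\e,m}$ on the original space. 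The two routes are close cousins --- Gy\"ongy--Krylov is in effect a packaged ``pathwise uniqueness plus tightness implies convergence in probability'' statement --- but the paper's version avoids invoking an infinite-dimensional Yamada--Watanabe theorem, while your martingale-problem identification of the noise term is more explicit than the paper's rather terse passage to the limit in \eqref{WF} and would make that step more robust. Either way the bound \eqref{E31} follows from Lemma \ref{lemUE} and lower semicontinuity of the norm under the convergence, as you say.
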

	\begin{proof}
		The proof of uniqueness is quite easy. Let us assume $u_\e(\cdot)$ and $v_\e(\cdot)$ be any two solutions of the integral equation \eqref{2.2}.
		Let us define the following stopping times 
		\begin{align*}
			\tau_{1,R}:=\inf\{t\geq0: \|u_\e(t)\|_{\L^p}\geq R\}\wedge T, \ \text{ and } \ 
			\tau_{2,R}:=\inf\{t\geq0: \|v_\e(t)\|_{\L^p}\geq R\}\wedge T.
		\end{align*}By Theorem \ref{thrmE1}, we obtain $u_\e(t)=v_\e(t),\ t\in[0,\tau_{1,R}\wedge \tau_{2,R}]$.  By the uniform energy estimate \eqref{E31} and an application of Markov's inequality yields \begin{align*}
			\P\big(\tau_{1,R}\wedge \tau_{2,R}<T\big)\to0 \ \ \text{ as } \ \ R\to\infty.
		\end{align*}Hence, we deduce the uniqueness of the solution on the whole interval $[0,T]$.
		
		Let us now establish the existence of the solution $u_\e(\cdot)$ in the interval $[0,T]$. The main ingredient of this proof is the Skorokhod representation theorem (see Theorem 3, \cite{RMD}) and Lemma 1.1, \cite{IGNK}. With the help Lemmas \ref{lemma3.6} and \ref{lemE2}, we obtain the tightness of $u_{\e,m}(\cdot)$, for each $m$ and $p>\max\{6,2\delta+1\}$. For a given pair of subsequences $\{u_{\e,l}\}$ and $\{u_{\e,k}\}$, by Prokhorov's theorem and Skorokhod's representation theorem, there exist subsequences $l_j$ and $k_j$ (denoted by $u_{\e,l_j},\ u_{\e,k_j}$) and a subsequence of random variables $z_{\e,j}:=\big(\vi{u}_{\e,l_j},\bar{u}_{\e,k_j},\bar{\W}_j^\Q\big)_{j\in\N}\in \mathcal{I}$, where $\mathcal{I}:=\C([0,T];\L^p([0,1]))\times \C([0,T];\L^p([0,1]))\times  \C([0,T]\times[0,1])$, for $p>\max\{6, 2\delta+1\}$ in some probability space $(\bar{\Omega},\bar{\mathscr{F}},\bar{\P})$ such that the sequence $z_j$ converges $\bar{\P}$-a.s. in $\mathcal{I}$ to a random variable $z_\e:=\big(\vi{u}_\e,\bar{u}_\e,\bar{\W}^\Q\big)$ as $j\to\infty$, with the same distributions of $z_{\e,j}$ and $\big(u_{\e,l_j},u_{\e,k_j}, \W^\Q\big)$. Here, the two random fields $ \bar{\W}^\Q$ and $ \bar{\W}_j^\Q$ are Brownian fields defined on  different stochastic bases $\Xi= (\bar{\Omega},\bar{\mathscr{F}},\{\bar{\mathscr{F}}_t\}_{t\geq 0},\bar{\P})$ and $\bar{\Xi}_j= (\bar{\Omega},\bar{\mathscr{F}},\{\bar{\mathscr{F}}_t^j\}_{t\geq 0},\bar{\P})$, respectively, where $\bar{\mathscr{F}}_t$ and $\bar{\mathscr{F}}_t^j$ are the completion of the $\sigma$-fields generated by $z_\e(t,x)$ and $z_{\e,j}(t,x)$ for all $s\in[0,t]$, $x\in[0,1]$, respectively. By Lemma 1.1, \cite{IGNK}, we only need to verify that $z_{\e,j}$ converges to $z_\e$ weakly. We know that weak solution and mild solution are equivalent, interested readers can find a proof in \cite{IGCR} (see Proposition 3.7). 	Passing the limit $j\to\infty$ in the weak formulation \eqref{WF} with $\vi{u}_{\e,j}(\cdot)$ replaced by $u_\e(\cdot)$, for every smooth function $\phi\in \C^2([0,1])$, with $\phi(0)=\phi(1)=0$, we obtain
		\begin{align}\label{E32}\nonumber
			\int_0^1 \vi{u}_{\e}(t,y)& \phi(y)\d y=\int_0^tu^0(y)\phi(y)\d y+\nu\int_0^t\int_0^1\vi{u}_{\e}(s,y)\phi''(y)\d y\d s\\&\nonumber\quad +\beta \int_0^t\int_0^1\c(\vi{u}_{\e}(t,y))\phi(y)\d y\d s+\frac{\alpha}{\delta+1}\int_0^t\int_0^1\p(\vi{u}_\e(s,y))\phi'(y)\d y\d s\\&\quad +\sqrt{\e}\int_0^t\int_0^1g(s,y,\vi{u}_\e(s,y))\phi(y)\bar{\W}^\Q(\d s,\d y), \ \ \bar{\P}\text{-a.s.},
		\end{align}for all $t\in[0,T]$ on the probability space $\bar{\Xi}$.  Furthermore, \eqref{E32} remains valid for $\bar{u}_\e$, then by the uniqueness of the solution we conclude that $\bar{u}_\e=\vi{u}_\e$. Now, applying Lemma 1.1 \cite{IGNK}, we infer that $u_{\e,m}$ converges in $\C([0,T];\L^p([0,1]))$, for $p>\max\{6, 2\delta+1\}$ in probability to some element $u_\e\in\C([0,T];\L^p([0,1]))$, for $p>\max\{6, 2\delta+1\}$. Hence, the proof is over. 
	\end{proof}
	\begin{remark}\label{rem3.6}
		Let us assume that the initial data $u^0\in\C([0,1])$. Using the above existence and uniqueness result, Lemmas \ref{lemma3.1}, \ref{lemma3.2} and \ref{lemma3.6}, we deduce that $u_\e$ (solution of the integral equation \eqref{2.2}) has a modification with continuous paths on $[0,T]$ with the values in $\C([0,1])$.
	\end{remark}

	\section{Large deviation principle} \label{Sec4}\setcounter{equation}{0}
	In this section, first we recall some basic definitions related to the LDP  and then we state a sufficient condition of the LDP, followed by our main Theorem \ref{thrmLDP4} of this section.
	\begin{definition}[LDP]\label{LDP}
		Let $\I:\EE\to[0,\infty]$ be a rate function on a Polish space $\EE$. The sequence $\{X_\e\}_{\e\in(0,1]}$ satisfies the LDP on the space $\EE$ with the rate function $\I(\cdot)$, if the following hold:
		\begin{enumerate}
			\item For any closed subset $\F\subset\EE$, 
			\begin{align*}
				\limsup_{\e\to0}\e\log \P\big(X_\e\in\F\big) \leq -\inf_{x\in\F}\I(x).
			\end{align*}
			\item For any closed subset $\mathrm{O}\subset\EE$, 
			\begin{align*}
				\liminf_{\e\to0}\e\log \P\big(X_\e\in\F\big) \geq -\inf_{x\in\mathrm{O}}\I(x).
			\end{align*}
		\end{enumerate}
	\end{definition}
	Our proof of LDP for the solution of the system \eqref{1.1}-\eqref{1.2} is based on a weak convergence approach. From the work \cite{MS}, one can obtain the equivalency between the Laplace principle and LDP.
	
	\begin{definition}[Laplace principle]\label{LP}
		The sequence of the random variable $\{X_\e\}$ on the Polish space $\EE$ is said to satisfy the Laplace principle with the rate function $\I(\cdot)$ if for each $f\in \C_b(\EE)$ (space of bounded continuous functions from $\EE$ to $\R$), 
		\begin{align*}
			\lim_{\e\to0}\e\log \E\bigg\{\exp\bigg[-\frac{f(X_\e)}{\e}\bigg]\bigg\}=-\inf_{x\in\EE}\big\{f(x)+\I(x)\big\}.
		\end{align*}
	\end{definition}
On the probability space  $(\Omega,\mathscr{F},\{\mathscr{F}_t\}_{t\geq 0},\P)$, let us define a predictable process $\phi:\Omega\times[0,T]\to \L^2([0,T]\times[0,1])$ and the following sets:
	\begin{align*}
		\UU^M &:=\bigg\{\phi\in\L^2([0,T]\times[0,1]): \int_0^T\int_0^1|\phi(s,y)|^2\d y\d s\leq M \bigg\}, \ M\in\N,\\
		\PP_2&:=\bigg\{\phi:\int_0^T\int_0^1|\phi(s,y)|^2\d y\d s<\infty, \ \ \P\text{-a.s.}\bigg\},\\
		\PP_2^M&:=\bigg\{\phi\in\PP_2: \ \phi(\omega)\in\UU^M, \ \ \P\text{-a.s.}\bigg\}, 
	\end{align*}where $\UU^M$ is a compact metric space equipped with the weak topology of $\L^2([0,T]\times[0,1])$, $\PP_2^M$ is an admissible control set. Let $\EE$ and $\EE_0$ be any two Polish spaces. We  assume that the initial data takes values in a compact subspace of $\EE_0$ and the solution in $\EE$. For every $\e>0$, define $\mathscr{G}_\e:\EE_0\times\C([0,T]\times[0,1])\to\EE$ as a family of measurable maps. Set $\mathrm{Y}_{\e,u^0}=\mathscr{G}_\e\big(u^0,\sqrt{\e}\W^\Q\big)$. Let us provide the sufficient condition for the uniform Laplace principle (ULP) for the family $\mathrm{Y}_{\e,u^0}$.
	\begin{condition}\label{Cond2}
		There exists a measurable map $\mathscr{G}_0:\EE_0\times \C([0,T]\times[0,1])\to\EE$ such that the following hold:
		\begin{enumerate}
			\item For $M\in\N$, let $\phi_m,\phi\in\UU^M$ be such that $\phi_m\to\phi$ and $u^0_m\to u^0$ as $m\to\infty$. Then 
			\begin{align*}
				\mathscr{G}_0\bigg(u^0_m,\int_0^{\cdot}\int_0^{\cdot}\phi_m(s,y)\d y\d s\bigg)\to 	\mathscr{G}_0\bigg(u^0,\int_0^{\cdot}\int_0^{\cdot}\phi(s,y)\d y\d s\bigg), \ \ \text{ as } \ \ m\to\infty.
			\end{align*}
			\item For $\M\in\N$, let $\phi_\e,\phi\in\PP_2^M$  be such that $\phi_\e$ converges in distribution (as $\UU^M$  valued random elements) to $\phi$, and the initial data $u^0_\e\to u^0$ as $\e\to0$ in $\EE_0$. Then 
			\begin{align*}
				\mathscr{G}_\e\bigg(u^0_\e,\sqrt{\e}\W^\Q+\int_0^{\cdot}\int_0^{\cdot}\phi_\e(s,y)\d y\d s\bigg)\Rightarrow	\mathscr{G}_0\bigg(u^0,\int_0^{\cdot}\int_0^{\cdot}\phi(s,y)\d y\d s\bigg),  \text{ as }  \e\to 0,
			\end{align*}where $``\Rightarrow"$ represents the convergence in distribution.
		\end{enumerate}
	\end{condition}
	For $\psi\in\EE$ and $u^0\in\EE_0$, we define 
	\begin{align*}
		\SS_\psi:=\bigg\{\phi\in\L^2([0,T]\times[0,1]): \ \psi=\mathscr{G}_0\bigg(u^0,\int_0^{\cdot}\int_0^{\cdot}\phi(s,y)\d y\d s\bigg)\bigg\}, 
	\end{align*}and the rate function as 
	\begin{equation}\label{RF}
		\I_{u^0}(\psi)=
		\left\{
		\begin{aligned}
			&	\inf_{\psi\in\SS_{\psi}}\bigg\{\frac{1}{2}\int_0^T\int_0^1|\phi(s,y)|^2\d y\d s\bigg\}, \ \ &&\text{ if } \ \ \psi\in\EE,\\
			&+\infty, \ \ &&\text{ if } \ \ \SS_\psi=\emptyset,
		\end{aligned}
		\right.
	\end{equation}where \begin{align*}&\inf_{\psi\in\SS_{\psi}}\bigg\{\frac{1}{2}\int_0^T\int_0^1|\phi(s,y)|^2\d y\d s\bigg\}\\&\hspace{1cm}=\inf_{\left\{\phi\in \L^2([0,T]\times[0,1]): \psi=\mathscr{G}_0(u^0,\int_0^{\cdot}\int_0^{\cdot}\phi(s,y)\d y\d s)\right\}}\bigg\{\frac{1}{2}\int_0^T\int_0^1|\phi(s,y)|^2\d y\d s\bigg\}
	\end{align*}

	Let us recall a general criteria for the LDP form \cite{ADPDVM}.
	
	\begin{theorem}[Theorem 7, \cite{ADPDVM}]\label{thrmLDP}
		Let $\mathscr{G}_0:\EE_0\times \C([0,T]\times[0,1])\to\EE$ be a measurable map, and the Condition \ref{Cond2} holds. Then, the family $\{\mathrm{Y}_{\e,u^0}\}$ satisfies the Laplace principle on $\EE$ with the rate function $\I_{u^0}(\cdot)$ given in \eqref{RF}, uniformly over the initial data $u^0$ on the compact subsets of $\EE_0$.
	\end{theorem}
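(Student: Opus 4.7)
The plan is to prove the uniform Laplace principle via the Boué--Dupuis variational representation for functionals of the $\Q$-Brownian sheet, and then invoke the (standard) equivalence between uniform Laplace principle and uniform LDP for Polish-valued random variables. For each $f \in \C_b(\EE)$ and each compact $\mathrm{K} \subset \EE_0$, the target is to show
\begin{align*}
\lim_{\e \to 0}\, \sup_{u^0 \in \mathrm{K}} \left| -\e \log \E\exp\!\left[-\tfrac{f(\mathrm{Y}_{\e,u^0})}{\e}\right] - \inf_{\psi \in \EE}\{f(\psi) + \I_{u^0}(\psi)\} \right| = 0,
\end{align*}
by matching upper and lower bounds.

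The first step is to apply the Budhiraja--Dupuis--Maroulas variational formula to obtain
\begin{align*}
-\e \log \E\exp\!\left[-\tfrac{f(\mathrm{Y}_{\e,u^0})}{\e}\right] = \inf_{\phi \in \PP_2} \E\!\left[\tfrac{1}{2}\!\int_0^T\!\!\int_0^1 |\phi(s,y)|^2 \d y \d s + f\!\left(\mathscr{G}_\e\!\left(u^0, \sqrt{\e}\W^\Q + \!\int_0^{\cdot}\!\!\int_0^{\cdot}\phi(s,y) \d y \d s\right)\!\right)\right].
\end{align*}
Since $f$ is bounded, standard arguments allow restricting the infimum on the right to $\phi \in \PP_2^M$ with $M = M(\|f\|_\infty)$ independent of $\e$ and of $u^0 \in \mathrm{K}$.

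The Laplace upper bound is obtained by picking, for $\eta>0$, a deterministic $\phi^\star \in \L^2([0,T]\times[0,1])$ achieving $\frac12\|\phi^\star\|_{\L^2}^2 + f(\mathscr{G}_0(u^0,\int_0^\cdot\!\int_0^\cdot \phi^\star)) \leq \inf_\psi\{f(\psi)+\I_{u^0}(\psi)\} + \eta$, plugging $\phi_\e \equiv \phi^\star$ into the variational formula, and using part (2) of Condition \ref{Cond2} together with bounded convergence. For the Laplace lower bound, for each $\e$ one selects an $\e$-near minimizer $\phi_\e \in \PP_2^M$; since $\UU^M$ is compact in the weak $\L^2$-topology, $\{\phi_\e\}$ is tight and, via Skorokhod/Jakubowski representation, a subsequence converges in distribution to some $\phi^\dagger \in \PP_2^M$. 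Part (2) of Condition \ref{Cond2} then forces $\mathscr{G}_\e(u^0,\sqrt{\e}\W^\Q + \int_0^\cdot\!\int_0^\cdot \phi_\e) \Rightarrow \mathscr{G}_0(u^0, \int_0^\cdot\!\int_0^\cdot \phi^\dagger)$, and combining Fatou's lemma with the weak lower-semicontinuity of the $\L^2$-norm yields the matching $\liminf$ bound.

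The main obstacle is the uniformity over the compact set $\mathrm{K}\subset\EE_0$: the weak convergence arguments must be performed along worst-case initial data $u^0_\e \in \mathrm{K}$, which requires a double extraction, first selecting a subsequence $u^0_\e \to u^0 \in \mathrm{K}$ by compactness of $\mathrm{K}$, and then extracting a convergent subsequence of controls. Part (1) of Condition \ref{Cond2} is what pins down the limiting rate functional $\I_{u^0}$ under the varying initial data and also supplies the compactness of the level sets $\{\I_{u^0}(\cdot)\leq N\}$, so that $\I_{u^0}(\cdot)$ is genuinely a rate function. A secondary technical point is justifying the variational representation in the present infinite-dimensional noise setting, which follows the general Banach-space version (Budhiraja--Dupuis) applied to $\sqrt{\e}\W^\Q$ viewed as a cylindrical Wiener process with covariance operator induced by $\{q_j\varphi_j\}$ under $\sum_j q_j^2 < \infty$. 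Finally, the passage from the uniform Laplace principle to the uniform LDP with the same rate function is classical and follows from Varadhan's inverse lemma applied uniformly, noting that $\EE$ is Polish.
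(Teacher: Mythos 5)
The paper does not actually prove this statement --- it is quoted as Theorem 7 of \cite{ADPDVM} and used as a black box, so the only meaningful comparison is with the proof in that reference. Your outline correctly reconstructs that argument: the Bou\'e--Dupuis/Budhiraja--Dupuis variational representation, the reduction to controls in $\PP_2^M$ with $M$ depending only on $\|f\|_\infty$, a deterministic near-minimizer for the Laplace upper bound, tightness of controls in the weak topology of the compact set $\UU^M$ combined with Fatou's lemma and weak lower semicontinuity of the $\L^2$-norm for the lower bound, and a double subsequence extraction (first in $u^0$ over the compact set, then in the controls) for the uniformity, with Condition \ref{Cond2}(1) supplying compactness of the level sets; this is essentially the same route as the source and I see no gap.
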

	If we verify Condition \ref{Cond2}, then by Theorem \ref{thrmLDP}, the family $\{\mathrm{Y}_{\e,u^0}\}$ satisfies the Laplace principle and by equivalency we obtain the family $\{\mathrm{Y}_{\e,u^0}\}$ satisfies LDP. Now, our focus is to verify the Condition \ref{Cond2}. 
	
	\subsection{Controlled and skeleton equations}
	In this subsection, we discuss controlled and skeleton equations. Let us fix $\EE=\C([0,T];\L^p([0,1]))$,  $\EE_0=\L^p([0,1])$  and the solution map of the integral equation \eqref{2.2} represented by $u_\e=\mathscr{G}_\e\big(u^0,\sqrt{\e}\W^\Q\big)$. Let us denote the solution $u_{\e,\phi}(t,x):=\mathscr{G}_\e\big(u^0,\sqrt{\e} \W^\Q+\int_0^{\cdot}\int_0^{\cdot}\phi(s,y)\d y\d s\big)$ of the following stochastic controlled integral (SCI) equation 
	\begin{align}\label{CE1}
		\nonumber
		u_{\e,\phi}(t,x)&=\int_0^1G(t,x,y)u^0(y)\d y+\frac{\alpha}{\delta+1}\int_0^t\int_0^1\frac{\partial G}{\partial y}(t-s,x,y)\p(u_{\e,\phi}(s,y))\d y\d s\\&\nonumber\quad +\beta\int_0^t\int_0^1G(t-s,x,y)\c(u_{\e,\phi}(s,y))\d y\d s\\&\nonumber\quad+\sqrt{\e} 
		\int_0^t\int_0^1G(t-s,x,y)g(s,y,u_{\e,\phi}(s,y))\W^\Q(\d s,\d y)\\&\quad+ 
		\int_0^t\int_0^1G(t-s,x,y)g(s,y,u_{\e,\phi}(s,y))\phi(s,y)\d y\d s.
	\end{align}If we choose $\e=0$, then the above SCI equation reduced to skeleton equation or limiting case. Let us denote the solution  $u_{0,\phi}(t,x):=\mathscr{G}_0\big(u^0,\int_0^{\cdot}\int_0^{\cdot}\phi(s,y)\d y\d s\big)$ of the following skeleton equation 
	\begin{align}\label{CE2}
		\nonumber
		u_{0,\phi}(t,x)&=\int_0^1G(t,x,y)u^0(y)\d y+\frac{\alpha}{\delta+1}\int_0^t\int_0^1\frac{\partial G}{\partial y}(t-s,x,y)\p(u_{0,\phi}(s,y))\d y\d s\\&\nonumber\quad +\beta\int_0^t\int_0^1G(t-s,x,y)\c(u_{0,\phi}(s,y))\d y\d s\\&\quad+ 
		\int_0^t\int_0^1G(t-s,x,y)g(s,y,u_{0,\phi}(s,y))\phi(s,y)\d y\d s.
	\end{align}For $\psi\in \C([0,T];\L^p([0,1]))$, we define the rate function
	\begin{align}\label{CE3}
		\I_{u^0}(\psi ):=\frac{1}{2}\inf_{\left\{\phi\in \L^2([0,T]\times[0,1]): \psi=\mathscr{G}_0(u^0,\int_0^{\cdot}\int_0^{\cdot}\phi(s,y)\d y\d s)\right\}} \int_0^T\int_0^1|\phi(s,y)|^2\d y\d s,
	\end{align}where $\psi$ satisfies the skeleton equation
	\begin{align}\label{CE4}
		\nonumber
		\psi(t,x)&=\int_0^1G(t,x,y)u^0(y)\d y+\frac{\alpha}{\delta+1}\int_0^t\int_0^1\frac{\partial G}{\partial y}(t-s,x,y)\p(\psi(s,y))\d y\d s\\&\nonumber\quad +\beta\int_0^t\int_0^1G(t-s,x,y)\c(\psi(s,y))\d y\d s\\&\quad+ 
		\int_0^t\int_0^1G(t-s,x,y)g(s,y,\psi(s,y))\phi(s,y)\d y\d s.
	\end{align}The following results discusses the existence and uniqueness of solution to the above stochastic controlled and skeleton integral equations.
	\begin{theorem}\label{thrmLDP1}
		Assume that Hypothesis \ref{hyp1} holds and $\phi\in\PP_2^M$. Then, for any initial data $u^0\in\L^p([0,1])$,  for $p>\max\{6,2\delta+1\}$, there exists a unique $\L^p([0,1])$-valued $\mathscr{F}_t$-adapted continuous process $u_{\e,\phi}(\cdot,\cdot)$ satisfying SCI equation \eqref{CE1}.
	\end{theorem}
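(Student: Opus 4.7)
The plan is to follow the two-stage strategy of Theorems \ref{thrmE1} and \ref{thrmE2}, adapted to handle the additional deterministic drift
\[
\mathscr{A}_5^{\phi}u(t,x) := \int_0^t\int_0^1 G(t-s,x,y)g(s,y,u(s,y))\phi(s,y)\d y\d s
\]
appearing in \eqref{CE1}. First I would truncate every nonlinearity by $\Pi_R(\|u_{\e,\phi}\|_{\L^p})$ as in \eqref{E2}, replace the map $\mathscr{A}$ of \eqref{E4} by $\mathscr{A}^{\phi} := \mathscr{A} + \mathscr{A}_5^{\phi,R}$ on the Banach space $\mathcal{H}$, and invoke the contraction mapping principle.

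The crucial observation for $\mathscr{A}_5^{\phi,R}$ is that because $\phi \in \PP_2^M$ satisfies $\int_0^T\int_0^1|\phi(s,y)|^2\d y\d s \leq M$ $\P$-a.s., a pathwise Cauchy-Schwarz in $(s,y)$ gives
\[
\bigl|\mathscr{A}_5^{\phi,R}u(t,x)\bigr|^p \leq M^{p/2}\bigg(\int_0^t\int_0^1 G^2(t-s,x,y)\Pi_R^2(\|u(s)\|_{\L^p})g^2(s,y,u(s,y))\d y\d s\bigg)^{p/2},
\]
which is exactly the integrand inside the expectation in the stochastic-convolution estimate \eqref{E9}. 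Hence the $\mathcal{H}$-boundedness and Lipschitz bound for $\mathscr{A}_5^{\phi,R}$ follow by a verbatim repetition of the computations producing \eqref{E9} and \eqref{E17}, with $\e^{p/2}$ replaced by $M^{p/2}$ and BDG replaced by pathwise Cauchy-Schwarz. Augmenting the smallness condition \eqref{E18} by the corresponding new contribution and taking $\lambda$ sufficiently large, Banach's fixed point theorem produces a unique local solution $u_{\e,\phi,R}$ satisfying an analogue of \eqref{E5}.

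To globalise I would prove an analogue of Lemma \ref{lemUE} by applying It\^o's formula to $\|u_{\e,\phi,R}\|_{\L^p}^p$. The five terms appearing there are handled exactly as in Lemma \ref{lemUE}; the only new contribution is
\[
p\int_0^t\int_0^1 |u_{\e,\phi,R}|^{p-2}u_{\e,\phi,R}\,g(s,y,u_{\e,\phi,R})\phi(s,y)\d y\d s,
\]
which, by Cauchy-Schwarz in $y$, Young's inequality and the linear growth of $g$, is bounded above by $\tfrac12\int_0^t\|\phi(s)\|_{\L^2}^2\d s$ plus polynomial expressions in $u_{\e,\phi,R}$ that can be absorbed by the dissipation $p\beta\int_0^T\|u_{\e,\phi,R}\|_{\L^{p+2\delta}}^{p+2\delta}\d s$ and the viscous contribution $\nu p(p-1)\int_0^T\||u|^{(p-2)/2}\partial_x u\|_{\L^2}^2\d s$. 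Since $\int_0^T\|\phi(s)\|_{\L^2}^2\d s\leq M$, Gronwall's inequality yields a bound on $\E[\sup_{t\in[0,T]}\|u_{\e,\phi,R}\|_{\L^p}^p]$ independent of $R$. Defining $\tau_R := \inf\{t: \|u_{\e,\phi,R}(t)\|_{\L^p}\geq R\}\wedge T$ and applying Markov's inequality gives $\P(\tau_R < T)\to 0$ as $R\to\infty$, and uniqueness on $[0,T]$ then follows from the stopping-time comparison opening the proof of Theorem \ref{thrmE2}.

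The main obstacle is the energy estimate rather than the fixed point: because $g$ is only of linear growth and $\phi$ is merely $\L^2$ in space-time, a crude Cauchy-Schwarz produces terms involving $\|u\|_{\L^{2p}}^{2p}$ whose power exceeds $p+2\delta$ when $p>2\delta$. These are tamed by combining the one-dimensional Sobolev embedding $\H_0^1\hookrightarrow\L^\infty$ applied to $|u|^{p/2}$, which yields $\|u\|_{\L^\infty}^p \leq C\||u|^{(p-2)/2}\partial_x u\|_{\L^2}^2$, with the interpolation $\|u\|_{\L^q}^q \leq \|u\|_{\L^\infty}^{q-p}\|u\|_{\L^p}^p$ and a Young-type absorption into the viscous dissipation on the left-hand side. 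The restriction $p > \max\{6,2\delta+1\}$ is exactly what makes this balancing, together with the tightness/Skorokhod argument inherited from Theorem \ref{thrmE2}, work simultaneously.
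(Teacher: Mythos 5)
Your proposal is essentially correct, but it takes a genuinely different route from the paper. The paper disposes of Theorem \ref{thrmLDP1} in a few lines via Girsanov's theorem: since $\phi\in\PP_2^M$, the density $\exp\{-\frac{1}{\sqrt{\e}}\int\phi\,\W^\Q(\d s,\d y)-\frac{1}{2\e}\int|\phi|^2\}$ defines an equivalent measure $\vi{\P}$ under which $\vi{\W}^\Q=\W^\Q+\frac{1}{\sqrt{\e}}\int\int\phi$ is again a white-in-time, colored-in-space noise, so \eqref{CE1} becomes the original equation \eqref{2.2} driven by $\vi{\W}^\Q$; Theorem \ref{thrmE2} then gives existence and uniqueness $\vi{\P}$-a.s., hence $\P$-a.s.\ by equivalence. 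Your direct rerun of the truncation/fixed-point/energy-estimate/tightness machinery with the extra drift $\mathscr{A}_5^{\phi}$ is more work but buys more: it yields moment bounds under the original measure $\P$ (which Girsanov does not transfer), and in effect it already proves Proposition \ref{PrioriEstimate}, which the paper needs later and only asserts "can be obtained similarly." Your pathwise Cauchy--Schwarz reduction of $\mathscr{A}_5^{\phi}$ to the integrand of \eqref{E9} is exactly the device the paper itself uses in \eqref{CE9}, so the contraction step is sound. One step in your energy estimate needs reordering: if you first apply Young to get $\frac12\|\phi(s)\|_{\L^2}^2+\frac12\|u\|_{\L^{2p}}^{2p}$ and only then interpolate, you land on $\||u|^{(p-2)/2}\partial_xu\|_{\L^2}^2\|u\|_{\L^p}^p$, which cannot be absorbed by the viscous term because of the extra $\|u\|_{\L^p}^p$ factor. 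Instead interpolate first, $\|u\|_{\L^{2p}}^{p}\leq\|u\|_{\L^\infty}^{p/2}\|u\|_{\L^p}^{p/2}$, and then apply Young so that the small piece is $\eta\|u\|_{\L^\infty}^{p}\leq \eta C\||u|^{(p-2)/2}\partial_xu\|_{\L^2}^2$ (absorbable) and the large piece is $C_\eta\|\phi(s)\|_{\L^2}^2\|u\|_{\L^p}^{p}$, a Gronwall term with integrable coefficient since $\int_0^T\|\phi(s)\|_{\L^2}^2\d s\leq M$. Finally, a small imprecision: the threshold $p>6$ is forced by the tightness of the stochastic convolution (Lemma \ref{lemE2}), not by the balancing in the energy estimate, which only needs $p\geq 2\delta+1$.
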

	\begin{proof}
		The proof of this theorem is based on the Girsanov theorem  (Theorem 10.14, \cite{DaZ}). 
		
		For any fixed $\phi \in\PP_2^M$, we define 
		\begin{align*}
			\frac{\d \vi{\P}}{\d \P}:=\exp\bigg\{-\frac{1}{\sqrt{\e}}\int_0^T\int_0^1\phi(s,y) \W^\Q(\d s,\d y)-\frac{1}{2\e}\int_0^T\int_0^1|\phi(s,y)|^2\d y\d s\bigg\},
		\end{align*}
		where $\W^\Q(\cdot,\cdot)$ is a white in time and colored in space noise  and the stochastic integral $\displaystyle\int_0^t\int_0^1\phi(s,y)\W^\Q(\d s,\d y)$ is considered in the sense of Walsh (see \cite{JBW}). Then the stochastic process 
		\begin{align*}\vi{\W}^\Q(t,x):= \W^\Q(t,x)+\frac{1}{\sqrt{\e}}\int_0^t\int_0^x\phi(s,y)\d y\d x, \ \ (t,x)\in[0,T]\times[0,1], 
		\end{align*}
		is a white in time and colored in space noise  under the probability measure $\vi{\P}$. Also, 
		\begin{align*}
			\exp\bigg\{-\frac{1}{\sqrt{\e}}\int_0^T\int_0^1\phi(s,y)\W^\Q(\d s,\d y)-\frac{1}{2\e}\int_0^T\int_0^1|\phi(s,y)|^2\d y\d s\bigg\},
		\end{align*}is an exponential martingale and $\vi{\P}$ is an another probability measure on the probability space $(\Omega,\mathscr{F},\{\mathscr{F}_t\}_{t\geq 0},\P)$ and $\vi{\P}$ is equivalent to the probability measure $\P$. By Girsanov's theorem, we conclude that $\vi{\W}^\Q(\cdot,\cdot)$ is a real valued Wiener process with respect to the new probability measure $\vi{\P}$, and using the equivalency of the measure, we find that $\vi{\W}^\Q(t,x)$ is a  white in time and colored in space noise. On the probability space $(\Omega,\mathscr{F},\{\mathscr{F}_t\}_{t\geq 0},\vi{\P})$, using Theorem \ref{thrmE2} we ensure the existence and uniqueness of the solution of the SCI equation \eqref{CE1} for the newly constructed probability measure $\vi{\P}$. Hence, we obtain the well-posedness for the  SCI equation \eqref{CE1} with respect to the probability measure $\P$  also.
	\end{proof}
	
	\begin{theorem}\label{thrmLDP3}
		Assume that Hypothesis \ref{hyp1}, and $\phi\in\L^2([0,T]\times[0,1])$. Then, for any initial data $u^0\in\L^p([0,1])$, for $p\geq 2\delta+1$, there exists a unique solution $u_{0,\phi}\in \C([0,T];\L^p([0,1]))$ to the skeleton integral equation \eqref{CE2}.
	\end{theorem}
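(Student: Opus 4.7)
\textbf{Plan of proof for Theorem \ref{thrmLDP3}.}
The plan is to adapt the strategy used for the stochastic integral equation (Theorems \ref{thrmE1} and \ref{thrmE2}) to the deterministic skeleton equation \eqref{CE2}, where the stochastic integral is now replaced by the deterministic ``control'' integral $\int_0^t\!\int_0^1 G(t-s,x,y)g(s,y,u_{0,\phi}(s,y))\phi(s,y)\,\d y\,\d s$. The overall roadmap is: (i) local existence for a truncated skeleton equation via a contraction mapping argument; (ii) uniform-in-$R$ energy estimates exploiting the dissipative reaction $-\|u\|_{\L^{p+2\delta}}^{p+2\delta}$ to absorb the control forcing; (iii) removal of the truncation to obtain a global mild solution; (iv) uniqueness via a Gronwall argument on the difference of two solutions.

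First I would introduce the truncation $\Pi_R$ as in \eqref{E1} and define an operator $\mathscr{B}$ on the Banach space $\mathcal{H}$ (from Theorem \ref{thrmE1}, without the expectation since everything is deterministic) whose first three components $\mathscr{B}_1,\mathscr{B}_2,\mathscr{B}_3$ coincide with those in \eqref{E4}, and whose new fourth component is
\begin{align*}
\mathscr{B}_4 u_{0,\phi,R}(t,x):=\int_0^t\!\int_0^1 G(t-s,x,y)\Pi_R(\|u_{0,\phi,R}(s)\|_{\L^p})g(s,y,u_{0,\phi,R}(s,y))\phi(s,y)\,\d y\,\d s.
\end{align*}
The self-map and contraction estimates for $\mathscr{B}_1,\mathscr{B}_2,\mathscr{B}_3$ are identical to those already established in Theorem \ref{thrmE1}. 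For $\mathscr{B}_4$, using Hypothesis \ref{hyp1}, Minkowski, Young's convolution inequality together with the Gaussian bound \eqref{A7} on $G$, and Cauchy-Schwarz in time together with $\phi\in\L^2([0,T]\times[0,1])$, I would obtain
\begin{align*}
\|\mathscr{B}_4 u(t)-\mathscr{B}_4 v(t)\|_{\L^p}\leq C(L,R)\int_0^t (t-s)^{-\frac{1}{2}+\frac{1}{2p}}\|\phi(s)\|_{\L^2}\|u(s)-v(s)\|_{\L^p}\,\d s,
\end{align*}
after which the same weighted-sup trick (multiplying by $e^{-\lambda p t}$) used in \eqref{E13}-\eqref{E17} and choosing $\lambda$ sufficiently large yields that $\mathscr{B}$ is a contraction on $\mathcal{H}$. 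The Banach fixed point theorem then produces a unique local mild solution $u_{0,\phi,R}$.

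Next, for global existence I would mimic Lemma \ref{lemUE} using the deterministic chain rule (Itô's formula with zero quadratic variation term) for $|\cdot|^p$; the convection contribution vanishes by the same argument, and the reaction contribution yields the favorable $-\tfrac{p\beta}{2}\|u\|_{\L^{p+2\delta}}^{p+2\delta}$ term. The control contribution is
\begin{align*}
p\int_0^1 g(s,x,u)|u|^{p-2}u\,\phi(s,x)\,\d x\leq pK\int_0^1 (|u|^{p-1}+|u|^{p})|\phi|\,\d x,
\end{align*}
which I would split via Young's inequality into a part absorbed by the $\L^{p+2\delta}$-dissipation, a multiple of $\|u\|_{\L^p}^p(1+\|\phi(s)\|_{\L^2}^2)$, and a purely $\|\phi(s)\|_{\L^2}^2$ piece. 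Since $\phi\in\L^2([0,T]\times[0,1])$, Gronwall's inequality gives a bound on $\sup_{t\in[0,T]}\|u_{0,\phi,R}(t)\|_{\L^p}^p$ that is independent of $R$, so for $R$ larger than this bound the truncation becomes inactive and $u_{0,\phi,R}$ is a genuine mild solution of \eqref{CE2}; its continuity into $\L^p$ follows from the regularity of each of the four components of the mild formula exactly as in Theorem \ref{thrmE2}. Uniqueness is obtained by writing the difference of two solutions, using the local Lipschitz bounds \eqref{E10}-\eqref{E11} together with the analog for $\mathscr{B}_4$, stopping times $\tau_{i,R}$ as in the proof of Theorem \ref{thrmE2}, and letting $R\to\infty$.

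\textbf{Main obstacle.} The most delicate step is calibrating the Young-type splitting of the control term $g(s,\cdot,u)\phi$ in the energy estimate: one needs simultaneously (a) the dissipation from $\|u\|_{\L^{p+2\delta}}^{p+2\delta}$ to kill the top-order piece involving $|u|^p|\phi|$, (b) an integrable-in-$s$ coefficient $(1+\|\phi(s)\|_{\L^2}^2)$ in front of $\|u\|_{\L^p}^p$ so that Gronwall closes, and (c) a finite ``source'' $\int_0^T\|\phi(s)\|_{\L^2}^2\,\d s$ from the remainder---this three-way balance is what restricts the range to $p\geq 2\delta+1$ and is essentially the only technical work new to this theorem; the remaining fixed-point architecture is inherited from Theorem \ref{thrmE1}.
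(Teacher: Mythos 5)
Your overall architecture — truncate with $\Pi_R$, run the contraction on the weighted space $\mathcal{H}$, prove an $R$-independent energy bound, remove the truncation, and get uniqueness by Gronwall — is exactly the route the paper takes: its proof of this theorem consists of the single new estimate
\begin{align*}
\Big\|\int_0^t\!\!\int_0^1G(t-s,\cdot,y)g(s,y,u_{0,\phi}(s,y))\phi(s,y)\,\d y\,\d s\Big\|_{\L^p}\leq C\Big(\int_0^t(t-s)^{\frac{1}{p}-1}\|g(s,u_{0,\phi}(s))\|_{\L^2}^2\,\d s\Big)^{1/2}\|\phi\|_{\L^2([0,T]\times[0,1])},
\end{align*}
followed by a deferral to the fixed-point/energy machinery of Section \ref{Sec3} (and Proposition 3.4 of \cite{AKMTM3}). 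Your $\mathscr{B}_4$ contraction bound is the Lipschitz version of the same Cauchy--Schwarz-plus-Green's-function computation and is fine.

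There is, however, a concrete problem with the mechanism you propose for what you correctly identify as the main obstacle. You claim the top-order control contribution $\int_0^1|u|^p|\phi|\,\d x$ can be split by Young's inequality so that the $|u|$-part is absorbed by the reaction dissipation $-\tfrac{p\beta}{2}\|u\|_{\L^{p+2\delta}}^{p+2\delta}$. Since $\phi(s,\cdot)$ is only in $\L^2([0,1])$, Cauchy--Schwarz gives $\int|u|^p|\phi|\,\d x\leq\|u\|_{\L^{2p}}^p\|\phi\|_{\L^2}$, and interpolating $\L^{2p}$ between $\L^p$ and $\L^{p+2\delta}$ requires $2p\leq p+2\delta$, i.e.\ $p\leq 2\delta$ — which is excluded by $p\geq 2\delta+1$. (A pointwise Young splitting $|u|^p|\phi|\leq\e|u|^{p+2\delta}+C|\phi|^{(p+2\delta)/2\delta}$ fails for the same reason, since $(p+2\delta)/2\delta>2$.) So the reaction term cannot kill this piece. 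The correct absorber is the viscous dissipation: write $\int|u|^p|\phi|\,\d x\leq\|u\|_{\L^\infty}^{p/2}\|u\|_{\L^p}^{p/2}\|\phi\|_{\L^2}\leq\e\|u\|_{\L^\infty}^{p}+C_\e\|u\|_{\L^p}^{p}\|\phi\|_{\L^2}^{2}$ and use the one-dimensional embedding $\int_0^t\|u(s)\|_{\L^\infty}^{p}\,\d s\leq C\int_0^t\||u(s)|^{\frac{p-2}{2}}\partial_xu(s)\|_{\L^2}^2\,\d s$ (this is exactly \eqref{528}); alternatively, run the bootstrap in the exponent $q$ that the paper carries out in Section \ref{ULDPBDD} around \eqref{530}--\eqref{533}, where the direct interpolation is only valid for $q\leq 2\delta+2$. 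With that correction your Gronwall argument closes and the rest of the proposal goes through as written.
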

	\begin{proof}
		Since $\phi\in\L^2([0,T]\times[0,1])$, 	and 
		\begin{align*}
			&\left\|	\int_0^t\int_0^1G(t-s,x,y)g(s,y,u_{0,\phi}(s,y))\phi(s,y)\d y\d s\right\|_{\L^p}\\&\leq C\left(\int_0^t(t-s)^{\frac{1}{p}-1}\|g(s,u_{0,\phi}(s))\|_{\L^2}^2\d s\right)^{1/2}\left(\int_0^t\|\phi(s)\|_{\L^2}^2\d s\right)^{1/2}\nonumber\\&\leq CK\|\phi\|_{\L^2([0,T]\times[0,1])}\left(\int_0^t(t-s)^{\frac{1}{p}-1}\big(1+\|u_{0,\phi}(s)\|_{\L^2}\big)^2\d s\right)^{1/2}\\&\leq CKT^{\frac{1}{2p}}\|\phi\|_{\L^2([0,T]\times[0,1])}\sup_{t\in[0,T]}\big\{1+\|u_{0,\phi}(t)\|_{\L^2}\big\}<\infty,
		\end{align*}
		the well-posedness of the skeleton integral equation \eqref{CE2}  can be obtained  in a similar way as in the proof of Proposition 3.4, \cite{AKMTM3} (see Section \ref{Sec3} above also) with some minor modifications in the proof. 
	\end{proof}
	\subsection{Proof of LDP for the solution of the system \eqref{1.1}-\eqref{1.2}} In this subsection, we state and prove the main result of this section. Let us first  state the result. 
	\begin{theorem}\label{thrmLDP4}
		Assume that Hypothesis \ref{hyp1} holds and the initial data $u^0\in\L^p([0,1])$, for $p>\max\{6,2\delta+1\}$. Then, there exists a stochastic process $\{u_\e(t,x):\ t\in[0,T], \ x\in[0,1]\}$ satisfying the Laplace principle on $\C([0,T];\L^p([0,1]))$ with the rate function $\I_{u^0}(\cdot)$ defined in \eqref{RF}, uniformly over the initial data $u^0$ on the compact subsets of $\EE_0$. 
	\end{theorem}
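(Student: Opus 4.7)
The strategy is to apply Theorem \ref{thrmLDP} by verifying the two parts of Condition \ref{Cond2}; well-posedness of both the stochastic controlled equation \eqref{CE1} and the skeleton equation \eqref{CE2} is already secured by Theorems \ref{thrmLDP1} and \ref{thrmLDP3}, so only the qualitative convergence statements remain.

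\textbf{Step 1: Uniform energy estimates for the controlled and skeleton solutions.} First I would derive a deterministic analogue of Lemma \ref{lemUE} for $u_{0,\phi}$ that is uniform over $\phi \in \UU^M$. Testing \eqref{CE2} (or rather its pointwise differential form) with $|u_{0,\phi}|^{p-2}u_{0,\phi}$, the convective nonlinearity drops out because $(\p(u), |u|^{p-2}u) = 0$, the top-order reaction term $-u^{p+2\delta}$ provides a favorable sign, and the control-driven term is bounded via Cauchy--Schwarz by
\[
\int_0^T \|g(s,\cdot,u_{0,\phi}(s))\|_{\L^2} \|\phi(s,\cdot)\|_{\L^2}\,\d s \leq K\sqrt{M}\,\Big(\int_0^T(1+\|u_{0,\phi}(s)\|_{\L^2}^2)\,\d s\Big)^{1/2}.
\]
A Gronwall step then yields $\sup_{\phi \in \UU^M}\sup_{t \in [0,T]} \|u_{0,\phi}(t)\|_{\L^p}^p \leq C(u^0, M, T)$. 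The analogous bound for the controlled SPDE $u_{\e,\phi_\e}$ with $\phi_\e \in \PP_2^M$ and $\e \in (0,1]$ follows by combining this deterministic argument with the stochastic energy estimate of Lemma \ref{lemUE}, using the BDG inequality \eqref{18} to control the martingale part.

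\textbf{Step 2: Verification of Part (1) of Condition \ref{Cond2}.} Let $\phi_m \to \phi$ in $\UU^M$ (weakly in $\L^2([0,T]\times[0,1])$) and $u^0_m \to u^0$ in $\L^p([0,1])$. The Green's function smoothing estimates from Lemmas \ref{lemma3.1}, \ref{lemma3.2} and \ref{lemma3.6} together with the Step 1 bound provide compactness of the heat, convective, and reaction terms of \eqref{CE2} in $\C([0,T];\L^p([0,1]))$, while the control-driven convolution is compact via the same lemmas applied with $g(s,y,u_{0,\phi_m})\phi_m \in \L^2$. Extract a subsequence $u_{0,\phi_{m_k}} \to v$ strongly in $\C([0,T];\L^p)$. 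The Lipschitz hypothesis gives $g(\cdot,\cdot,u_{0,\phi_{m_k}}) \to g(\cdot,\cdot,v)$ strongly in $\L^2([0,T]\times[0,1])$, so the weak-strong pairing lets me pass to the limit in the control term; passage to the limit in the polynomial terms $\p$ and $\c$ uses the strong $\L^p$ convergence together with the Step 1 bound. The limit $v$ solves \eqref{CE2} with control $\phi$, so uniqueness in Theorem \ref{thrmLDP3} forces $v = u_{0,\phi}$, and uniqueness of the limit upgrades subsequential convergence to convergence of the whole sequence.

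\textbf{Step 3: Verification of Part (2) of Condition \ref{Cond2}.} Given $\phi_\e \in \PP_2^M$ with $\phi_\e \Rightarrow \phi$ as $\UU^M$-valued random elements and $u^0_\e \to u^0$ in $\EE_0$, I would follow the tightness plus Skorokhod representation scheme from the proof of Theorem \ref{thrmE2}. The uniform bound from Step 1 and Lemmas \ref{lemma3.6}, \ref{lemE2} give tightness of $\{u_{\e,\phi_\e}\}$ in $\C([0,T];\L^p([0,1]))$ for $p > \max\{6, 2\delta+1\}$, and $\UU^M$ is already compact. On a new probability space I then have a.s.\ convergence $(\tilde u_{\e,\phi_\e}, \tilde \phi_\e, \tilde \W^\Q) \to (\tilde u, \tilde \phi, \tilde \W^\Q)$. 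The small-noise stochastic integral
\[
\sqrt{\e}\int_0^t\!\!\int_0^1 G(t-s,x,y)\,g(s,y,\tilde u_{\e,\phi_\e}(s,y))\,\tilde\W^\Q(\d s,\d y)
\]
vanishes in $\L^p$ thanks to its prefactor together with \eqref{18} and the Step 1 bound, while the control-integral limit is identified via the weak-strong pairing as in Step 2. The limit thus solves the skeleton equation \eqref{CE2} with control $\tilde\phi$, and uniqueness (Theorem \ref{thrmLDP3}) yields $\tilde u = \mathscr{G}_0(u^0, \int_0^\cdot\int_0^\cdot \tilde\phi(s,y)\,\d y\d s)$, which is the required convergence in distribution.

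\textbf{Main obstacle.} The principal difficulty is the interplay between the polynomial nonlinearity of order $2\delta+1$ in $\c(u)$ and the infinite-dimensional noise with linear-growth coefficient: closing the Step 1 estimates uniformly in $\phi$ relies critically on the sign of the $-u^{p+2\delta}$ term to absorb the control-driven growth, and passing to the limit in $\c(u_{0,\phi_m})$ requires strong $\L^p$-convergence of the extracted subsequence, which is obtained only through the full Green's function machinery of Appendix \ref{SUR}. A secondary but delicate point is the identification of the stochastic-convolution limit in Step 3 on the new probability space delivered by the Jakubowski--Skorokhod theorem; the UMD-space BDG inequality \eqref{18} is exactly what makes this identification quantitative in $\L^p$.
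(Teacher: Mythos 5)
Your proposal is correct and follows essentially the same route as the paper: both reduce the theorem to verifying Condition \ref{Cond2} and then invoke Theorem \ref{thrmLDP}, with part (2) handled by uniform a priori bounds, tightness of each term of the mild formulation (via Lemmas \ref{lemma3.6} and \ref{lemE2}), Prokhorov plus Skorokhod representation, vanishing of the $\sqrt{\e}$-stochastic convolution, and identification of the limit through uniqueness of the skeleton equation. The only notable difference is cosmetic: you treat the control-integral limit by a weak--strong pairing (strong convergence of $g(\cdot,\cdot,u_{\e,\phi_\e})$ against weak convergence of $\phi_\e$ in $\UU^M$), which is if anything a cleaner justification than the paper's estimate \eqref{CE18}, and you spell out part (1) of the condition in more detail than the paper, which dismisses it as the easier case.
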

	In view of Theorem \ref{thrmLDP}, our focus will be on the verification of Condition \ref{Cond2}. Validation of  Condition \ref{Cond2} (2) is easy compared to (1). Therefore, we are providing a proof of verification of Condition \ref{Cond2} (2) only.
	\begin{proof}
		First, we  establish the convergence of the controlled process. Let $M>0$, and  $\phi_\e,\phi\in\PP_2^M$  be such that $\phi_\e\Rightarrow\phi$, and the initial data $u^0_\e\to u^0$ as $\e\to0$. Our aim is to show that  $$	u_{\e,\phi_\e}=\mathscr{G}_\e\bigg(u^0_\e,\sqrt{\e}\W^\Q+\int_0^{\cdot}\int_0^{\cdot}\phi_\e(s,y)\d y\d s\bigg)\Rightarrow u_{0,\phi}=	\mathscr{G}_0\bigg(u^0,\int_0^{\cdot}\int_0^{\cdot}\phi(s,y)\d y\d s\bigg),$$ as $\e\to 0$,  where  $u_{0,\phi}$ is the solution of the integral equation \eqref{CE2}. In order to establish this,  we use  compactness arguments.
		
		Consider the following integral equation for $(t,x)\in[0,T]\times[0,1]$:
		\begin{align}\label{CE5}
			\nonumber
			u_{\e,\phi_{\e}}(t,x)&=\int_0^1G(t,x,y)u^0_\e(y)\d y+\frac{\alpha}{\delta+1}\int_0^t\int_0^1\frac{\partial G}{\partial y}(t-s,x,y)\p(u_{\e,\phi_{\e}}(s,y))\d y\d s\\&\nonumber\quad +\beta\int_0^t\int_0^1G(t-s,x,y)\c(u_{\e,\phi_{\e}}(s,y))\d y\d s\\&\nonumber\quad+\sqrt{\e} 	\int_0^t\int_0^1G(t-s,x,y)g(s,y,u_{\e,\phi_{\e}}(s,y))\W^\Q(\d s,\d y)\\&\nonumber\quad+ 	\int_0^t\int_0^1G(t-s,x,y)g(s,y,u_{\e,\phi_{\e}}(s,y))\phi_{\e}(s,y)\d y\d s\\&=: I_{1,\e}+\frac{\alpha}{\delta+1}I_{2,\e}+\beta I_{3,\e}+\sqrt{\e}I_{4,\e}+I_{5,\e}.
		\end{align}
		For the term $I_{1,\e}$, we have $\|I_{1,\e}\|_{\L^p}\leq C\|u^0_\e\|_{\L^p}$. Then, applying Markov's inequality, we deduce 
		\begin{align*}
			\P\big(\|I_{1,\e}\|_{\L^p}>R\big)\leq R^{-1}\E\big[\|I_{1,\e}\|_{\L^p}\big] \leq C R^{-1}(1+\|u^0\|_{\L^p}),
		\end{align*}which implies $I_{1,\e}$ is tight in the space $\C([0,T];\L^p([0,1]))$, for any $p\geq1$.
		
		Let us consider the term $I_{3,\e}$ and estimate it using \eqref{A1}, \eqref{A7}, Minkowski's and Young's inequalities as 
		\begin{align}\label{CE6}	\nonumber
			&\|I_{3,\e}\|_{\L^p}=	\bigg\|\int_0^t\int_0^1G(t-s,\cdot,y)\c(u_{\e,\phi_{\e}}(s,y))\d y\d s\bigg\|_{\L^p} \\&	\nonumber\leq C\int_0^t\left\{(t-s)^{-\frac{\delta}{2p}}\|u_{\e,\phi_\e}(s)\|_{\L^p}^{\delta+1}+\|u_{\e,\phi_\e}(s)\|_{\L^p}+(t-s)^{-\frac{\delta}{p}}\|u_{\e,\phi_\e}(s)\|_{\L^p}^{2\delta+1}\right\}\d s\\& \leq 
			C\left\{T^{1-\frac{\delta}{2p}}\sup_{s\in[0,T]}\|u_{\e,\phi_\e}(s)\|_{\L^p}^{\delta+1}+T\sup_{s\in[0,T]}\|u_{\e,\phi_\e}(s)\|_{\L^p}+T^{1-\frac{\delta}{p}}\sup_{s\in[0,T]}\|u_{\e,\phi_\e}(s)\|_{\L^p}^{2\delta+1}\right\},
		\end{align}for $p\geq2\delta+1$. In view of Lemma \ref{lemma3.6}, we only need to show that the term $\eta_\e =\sup\limits_{s\in[0,T]}\|u_{\e,\phi_{\e}}(s)\|_{\L^p}^{2\delta+1}$ (considering the highest order non-linearity and other terms can be handled in a same manner) is bounded in probability. Using Markov's inequality, similar arguments as in the proofs of Lemma \ref{lemUE} (cf. \eqref{UE1})  and Theorem \ref{thrmE2}, we obtain 
		\begin{align}\label{CE7}
			\lim_{R\to\infty}\sup_{\e\in(0,1]}\P\big(\eta_\e\geq R\big)= \lim_{R\to\infty}\sup_{\e\in(0,1]}\P\bigg(\sup_{t\in[0,T]}\|u_{\e,\phi_\e}(s)\|_{\L^p}^{2\delta+1}\geq R\bigg) 	=0.
		\end{align}Therefore, the term $I_{3,\e}$ is tight in the space  $\C([0,T];\L^p([0,1]))$, for $p\geq2\delta+1$. Similarly, using \eqref{A1}, \eqref{A7}, Minkowski's and Young's inequalities, we estimate the term $I_{2,\e}$ as 
		\begin{align}\label{CE8}
			\|I_{2,\e}\|_{\L^p}\leq C\int_0^t(t-s)^{-\frac{\delta}{2p}-\frac{1}{2}}\|u_{\e,\phi_\e}(s)\|_{\L^p}^{\delta+1}\d s\leq CT^{\frac{1}{2}-\frac{\delta}{2p}}\sup_{s\in[0,T]}\|u_{\e,\phi_\e}(s)\|_{\L^p}^{\delta+1},
		\end{align}for $p\geq \delta+1$. Again, using  similar arguments as in \eqref{CE7}, we conclude that $I_{2,\e}$ is tight in the space  $\C([0,T];\L^p([0,1]))$, for $p\geq\delta+1$. Let us move to the final term $I_{5,\e}$ of the right hand side of \eqref{CE5}, and estimate it using \eqref{A1}, \eqref{A7}, Hypothesis \ref{hyp1} (linear growth of $g$), H\"older's and Minkowski's inequalities as 
		\begin{align}\label{CE9}\nonumber
			\|I_{5,\e}\|_{\L^p} &\leq \bigg(\int_0^t\int_0^1|\phi_\e(s,y)|^2\d y\d s\bigg)^{\frac{1}{2}}\bigg\|\bigg(\int_0^t\int_0^1G^2(t-s,\cdot,y)g^2(s,y,u_{\e,\phi_\e}(s,y))\d y\d s\bigg)^{\frac{1}{2}} \bigg\|_{\L^p} 
			\\&\nonumber\leq C \bigg(\int_0^t\int_0^1|\phi_\e(s,y)|^2\d y\d s\bigg)^{\frac{1}{2}} \bigg\|\int_0^t\int_0^1G^2(t-s,\cdot,y)g^2(s,y,u_{\e,\phi_\e}(s,y))\d y\d s\bigg\|_{\L^\frac{p}{2}}^{\frac{1}{2}}  \\&\leq CT^{\frac{1}{2p}}M^{\frac{1}{2}}K\bigg(1+\sup_{s\in[0,T]}\|u_{\e,\phi_\e}(s)\|_{\L^p}\bigg).
		\end{align}Thus, the tightness of the term $\ I_{5,\e}$ can be obtained in a similar way as we have done for $I_{3,\e}$. Also, the tightness of the penultimate term $I_{4,\e}$ is already established in Lemma \ref{lemE2}. Combining the tightness of all terms, we obtain the tightness of $\{u_{\e,\phi_\e}\}$ in the space $\C([0,T];\L^p([0,1]))$, for $p>\max\{6,2\delta+1\}$.  By Prokhorov's theorem, we are able to find a subsequence of controlled process $\{u_{\e,\phi_\e}\}$ (still denoted by same index)  converges in distribution to some element say $u_{0,\phi}$ in the space $\C([0,T];\L^p([0,1]))$, for $p>\max\{6,2\delta+1\}$. Now, we only need to verify that $u_{0,\phi}$ satisfies \eqref{CE4}, that is, we need to show that as $\e\to 0$
		\begin{align*}
			I_{1,\e} &\Rightarrow\int_0^1G(t,x,y)u^0(y)\d y,\\
			I_{2,\e} &\Rightarrow\int_0^t\int_0^1\frac{\partial G}{\partial y}(t-s,x,y)\p(u_{0,\phi}(s,y))\d y\d s,\\
			I_{3,\e} &\Rightarrow\int_0^t\int_0^1G(t-s,x,y)\c(u_{0,\phi}(s,y))\d y\d s,\\
			I_{5,\e} &\Rightarrow\int_0^t\int_0^1G(t-s,x,y)g(s,y,u_{0,\phi}(s,y))\phi(s,y)\d y\d s,\\
			\sqrt{\e}I_{4,\e} &\Rightarrow0, 
		\end{align*}in  the space $\C([0,T];\L^p([0,1]))$, for $p>\max\{6,2\delta+1\}$. The convergence of the first term $I_{1,\e}$ is trivial, since $u^0_\e\to u^0$ as $\e\to0$. As  $\{u_{\e,\phi_\e}\}$ is tight,  the family of random elements $\left\{\left( u_{\e,\phi_\e},\W^\Q,\phi_\e\right)\right\}$ is also tight. For $\W_\e^\Q:=\W^\Q$, $\e\in(0,1]$,
		using Skorokhod's representation theorem (see Theorem 3, \cite{RMD}), we can construct a new probability space $(\wi{\Omega},\wi{\mathscr{F}},\{\wi{\mathscr{F}}_t\}_{t\geq 0},\wi{\P})$ and a sequence of random variables $\left\{\left(
		\wi{u}_{\e,\wi{\phi}_\e},\wi{\W}^\Q_\e,\wi{\phi}_\e\right)\right\}$ such that 
		\begin{align*}
			\mathscr{L}\left\{	\left(
			\wi{u}_{\e,\wi{\phi}_\e},\wi{\W}_\e^\Q,\wi{\phi}_\e\right)\right\}=\mathscr{L}\left\{\left( u_{\e,\phi_\e},\W^\Q,\phi_\e\right)\right\}, 
		\end{align*}where $\mathscr{L}$ denotes the law, and the sequence of random variables $\left\{\left(	\wi{u}_{\e,\wi{\phi}_\e},\wi{\W}_\e^\Q,\wi{\phi}_\e\right)\right\}$ converges to a random variable $\left( \wi{u}_{0,\wi{\phi}},\wi{\W}^\Q,\wi{\phi}\right)$, $\wi{\P}$-a.s., in the space 
		\begin{align*}
			\C([0,T];\L^p([0,1]))\times \C([0,T]\times[0,1])\times \UU^M, \ \ \text{ for } \ \ p >\max\{6,2\delta+1\}.
		\end{align*}Moreover, $\wi{u}_{\e,\wi{\phi}_\e}$  satisfies the following integral equation:
		\begin{align}\label{CE10}
			\nonumber
			\wi{u}_{\e,\wi{\phi}_\e}(t,x)&=\int_0^1G(t,x,y)u^0_\e(y)\d y+\frac{\alpha}{\delta+1}\int_0^t\int_0^1\frac{\partial G}{\partial y}(t-s,x,y)\p(\wi{u}_{\e,\wi{\phi}_\e}(s,y))\d y\d s\\&\nonumber\quad +\beta\int_0^t\int_0^1G(t-s,x,y)\c(\wi{u}_{\e,\wi{\phi}_\e}(s,y))\d y\d s\\&\nonumber\quad+\sqrt{\e} 
			\int_0^t\int_0^1G(t-s,x,y)g(s,y,\wi{u}_{\e,\wi{\phi}_\e}(s,y))\wi{\W}_\e^\Q(\d s,\d y)\\&\quad+ 
			\int_0^t\int_0^1G(t-s,x,y)g(s,y,\wi{u}_{\e,\wi{\phi}_\e}(s,y))\wi{\phi}_{\e}(s,y)\d y\d s,\ \wi{\P}\text{-a.s.}, 
		\end{align}and we can use the convergences $\wi{u}_{\e,\wi{\phi}_\e}\to 	\wi{u}_{0,\wi{\phi}}$ and $\wi{\phi}_\e\to\wi{\phi}$, $\wi{\P}$-a.s., to prove that 	\begin{align}\label{CE11}
			\nonumber
			\wi{u}_{0,\wi{\phi}}(t,x)&=\int_0^1G(t,x,y)u^0(y)\d y+\frac{\alpha}{\delta+1}\int_0^t\int_0^1\frac{\partial G}{\partial y}(t-s,x,y)\p(\wi{u}_{0,\wi{\phi}}(s,y))\d y\d s\\&\nonumber\quad +\beta\int_0^t\int_0^1G(t-s,x,y)\c(\wi{u}_{0,\wi{\phi}}(s,y))\d y\d s\\&\quad+
			\int_0^t\int_0^1G(t-s,x,y)g(s,y,\wi{u}_{0,\wi{\phi}}(s,y))\wi{\phi}(s,y)\d y\d s.
		\end{align}
		Note that  $\mathscr{L}\{\wi{\phi}\}$ and $\mathscr{L}\{\phi\}$ are the same, since $\mathscr{L}\{\wi{\phi}_\e\}$ and $\mathscr{L}\{\phi_\e\}$ are the same and $\wi{\phi}_\e\to\wi{\phi}$, $\wi{\P}$-a.s. Therefore, by the uniqueness of the solution of the integral equation \eqref{CE11}, we conclude that  $\mathscr{L}\{\wi{u}_{0,\wi{\phi}}\}$ and $\mathscr{L}\{u_{0,\phi}\}$ are the same, which  implies $u_{\e,\phi_\e}\Rightarrow u_{0,\phi}=\mathscr{G}_0\bigg(u^0,\int_0^{\cdot}\int_0^{\cdot}\phi(s,y)\d y\d s\bigg)$. 
		For simplicity, we assume that $u_{\e,\phi_\e} \to u_{0,\phi}$ and $\phi_\e\to\phi,\ \wi{\P}$-a.s., and we need to verify that $u_{0,\phi}=\mathscr{G}_0\big(	u^0,\int_0^{\cdot}\int_0^{\cdot}{\phi}(s,y)\d y \d s\big)$ as $\wi{u}_{0,\wi{\phi}}=\mathscr{G}_0\big(	u^0,\int_0^{\cdot}\int_0^{\cdot}\wi{\phi}(s,y)\d y \d s\big)$.

		We consider the term $I_{2,\e}$, 
		and estimate it using  similar calculations as in the proof of Proposition 3.3, \cite{AKMTM3} (see Step 2), as
		\begin{align}\nonumber
			&\bigg\|I_{2,\e}- \int_0^t\int_0^1\frac{\partial G}{\partial y}(t-s,\cdot,y)\p(u_{0,\phi}(s,y))\d y\d s\bigg\|_{\L^p}\\&\label{313}\leq C\int_0^t(t-s)^{-\frac{1}{2}-\frac{\delta}{2p}}\big(\|u_{\e,\phi}(s)\|_{\L^p}+\|u_{0,\phi}(s)\|_{\L^p}\big)^\delta\|u_{\e,\phi}(s)-u_{0,\phi}(s)\|_{\L^p}\d s\\&\label{CE12}\leq 
			CT^{\frac{1}{2}-\frac{\delta}{2p}} \bigg(\sup_{s\in[0,T]}\big\{\|u_{\e,\phi_\e}(s)\|_{\L^p}+\|u_{0,\phi}(s)\|_{\L^p}\big\}^\delta \bigg)\sup_{s\in[0,T]}\|u_{\e,\phi_\e}(s)-u_{0,\phi}(s)\|_{\L^p},
		\end{align}
		for $p\geq \delta+1$. Since $u_{\e,\phi_\e} \to u_{0,\phi},\ \wi{\P}$-a.s., therefore passing $\e\to0$ in the above inequality, we obtain 
		\begin{align}\label{CE13}
			I_{2,\e} \to \int_0^t\int_0^1\frac{\partial G}{\partial y}(t-s,x,y)\p(u_{0,\phi}(s,y))\d y\d s. 
		\end{align}Now, we consider the term $I_{3,\e}$, and estimate it  using  similar arguments as in the proof of Proposition 3.3, \cite{AKMTM3}, to find 
		\begin{align}\nonumber
			&\bigg\|I_{3,\e}-\int_0^t\int_0^1G(t-s,\cdot,y)\c(u_{0,\phi}(s,y))\d y\d s\bigg\|_{\L^p} \\&\nonumber\leq C\int_0^t\bigg\{(t-s)^{-\frac{\delta}{2p}}(1+\delta)(1+\gamma)\big(\|u_{\e,\phi_\e}(s)\|_{\L^p}+\|u_{0,\phi}(s)\|_{\L^p}\big)^\delta\\&\label{316}\qquad+\gamma+(t-s)^{-\frac{\delta}{p}}(1+2\delta)\big(\|u_{\e,\phi_\e}(s)\|_{\L^p}+\|u_{0,\phi}(s)\|_{\L^p}\big)^{2\delta}\bigg\}\|u_{\e,\phi_\e}(s)-u_{0,\phi}(s)\|_{\L^p}\d s\\&\nonumber \leq C\bigg(T^{1-\frac{\delta}{2p}}(1+\delta)(1+\gamma)\sup_{s\in[0,T]}\big\{\|u_{\e,\phi_\e}(s)\|_{\L^p}+\|u_{0,\phi}(s)\|_{\L^p} \big\}^\delta+ T\gamma\\&\label{CE14}\qquad+T^{1-\frac{\delta}{p}}(1+2\delta)\sup_{s\in[0,T]}\big\{\|u_{\e,\phi_\e}(s)\|_{\L^p}+\|u_{0,\phi}(s)\|_{\L^p} \big\}^{2\delta}\bigg)\sup_{s\in[0,T]}\|u_{\e,\phi_\e}(s)-u_{0,\phi}(s)\|_{\L^p},
		\end{align} for $p\geq 2\delta+1$. Using the convergence $u_{\e,\phi_\e} \to u_{0,\phi},\ \wi{\P}$-a.s., as $\e\to0$ in the above inequality, we conclude that 
		\begin{align}\label{CE15}
			I_{3,\e} \to \int_0^t\int_0^1G(t-s,x,y)\c(u_{0,\phi}(s,y))\d y\d s. 
		\end{align}Now, we consider the term $I_{5,\e}$, and estimate it using the triangle inequality as 
		\begin{align}\label{CE16}\nonumber
			&\bigg\|I_{5,\e}-\int_0^t\int_0^1G(t-s,\cdot,y)g(s,y,u_{0,\phi}(s,y))\phi(s,y)\d y\d s\bigg\|_{\L^p} \\&\nonumber\leq
			C(p)\bigg\|\int_0^t\int_0^1 \big[G(t-s,\cdot,y)\big(g(s,y,u_{\e,\phi_\e}(s,y))-g(s,y,u_{0,\phi}(s,y))\big)\big] \phi_\e(s,y)\d y\d s\bigg\|_{\L^p}\\&\nonumber\quad +C(p)	\bigg\|\int_0^t\int_0^1G(t-s,\cdot,y)g(s,y,u_{0,\phi}(s,y)) \big(\phi_\e(s,y)-\phi(s,y)\big)\d y\d s\bigg\|_{\L^p}  \\&=:I_{5,\e}^1+I_{5,\e}^2.
		\end{align}Using Hypothesis \ref{hyp1}, \eqref{A1}, \eqref{A7}, Cauchy-Schwarz, Minkowski's and Young's inequalities, we estimate the first term term $I_{5,\e}^1$ of the right hand side of the above inequality as 
		\begin{align}\label{CE17}\nonumber
			&\|I_{5,\e}^1\|_{\L^p}\nonumber\\ &\nonumber\leq C(p)M^{\frac{1}{2}}\bigg\|\bigg( \int_0^t\int_0^1G^2(t-s,\cdot,y)\big(g(s,y,u_{\e,\phi_\e}(s,y))-g(s,y,u_{0,\phi}(s,y))\big)^2\d y\d s\bigg)^{\frac{1}{2}}\bigg\|_{\L^p}\\&\nonumber\leq 
			C(p,L) T^{\frac{1}{2p}}M^{\frac{1}{2}} \sup_{s\in[0,T]}\|u_{\e,\phi_\e}(s)-u_{0,\phi}(s)\|_{\L^p} \\&\to 0 \ \ \wi{\P}\text{-a.s.,} \ \ \text{ as } \ \ \e\to0.
		\end{align} 
		Using the convergence $\phi_\e\to\phi$, $\wi{\P}$-a.s, Hypothesis \ref{hyp1}, H\"older's, Minkowski's and Young's inequalities, we estimate the final term $I_{5,\e}^1$ of the right hand side of \eqref{CE16} as 
		\begin{align}\label{CE18}\nonumber
				\|I_{5,\e}^2\|_{\L^p}&\nonumber\leq C(p) \bigg(\int_0^t\int_0^1\big|\phi_\e(s,y)-\phi(s,y)\big|^2\d y\d s\bigg)^\frac{1}{2}\\&\qquad\times\nonumber\bigg\|\bigg(\int_0^t\int_0^1 G^2(t-s,\cdot,y)g^2(s,y,u_{0,\phi}(s,y))\d y\d s\bigg)^\frac{1}{2}  \bigg\|_{\L^p}\\&\nonumber\leq C(p,K) T^{\frac{1}{2p}}\bigg(\int_0^t\int_0^1\big|\phi_\e(s,y)-\phi(s,y)\big|^2\d y\d s\bigg)^\frac{1}{2} \sup_{s\in[0,T]}\big\{1+\|u_{0,\phi}(s)\|_{\L^p}\big\} \\&\to 0 \ \ \wi{\P}\text{-a.s.,} \ \ \text{ as } \ \ \e\to0.
		\end{align}
	Combining \eqref{CE16}-\eqref{CE18}, we obtain 
		\begin{align}\label{CE19}
			I_{5,\e}\to \int_0^t\int_0^1G(t-s,x,y)g(s,y,u_{0,\phi}(s,y))\phi(s,y)\d y\d s.
		\end{align}Finally, the process $I_{4,\e}$ has a modification in the space $\C([0,T]\times[0,1])$ from Lemma \ref{lemE2}, therefore $\sqrt{\e}I_{4,\e}\to0,\ \wi{\P}$-a.s. Therefore, we have established that the stochastic controlled process $u_{\e,\phi}(\cdot)$ (solution of \eqref{CE1}) converges to the solution $u_{0,\phi}(\cdot)$  to the skeleton equation \eqref{CE2} along a subsequence. By the uniqueness,  the result holds for every subsequence $\e_m$. The arbitrariness of $\e_m$ implies 
		\begin{align*}
			u_{\e,\phi_\e}=	\mathscr{G}_\e\bigg(u^0_\e,\sqrt{\e}\W^\Q+\int_0^{\cdot}\int_0^{\cdot}\phi_\e(s,y)\d y\d s\bigg)\Rightarrow u_{0,\phi}=\mathscr{G}_0\left(u^0,\int_0^{\cdot}\int_0^{\cdot}\phi(s,y)\d y\d s\right),
		\end{align*}and Condition \ref{Cond2} (2) is verified. 
		
		For the verification of Condition \ref{Cond2} (1), it is sufficient to combine the well-posedness of the skeleton equation \eqref{4.3} and the convergence of $u_{0,\phi_m}$. Hence, one can conclude the proof of  the theorem..
	\end{proof}  
	
	\section{Uniform large deviation principle} \label{SecULDP}\setcounter{equation}{0}
	In this section, we recall the basic definitions of ULDP (in particular, Freidlin-Wentzell uniform large deviation principle (FWULDP), see \cite{MIFADW}) and equicontinuous uniform Laplace principle (EULP). Later, we provide a sufficient  condition under which  EULP holds. Let us start with the following additional information.
	
	Let $(\EE,d)$ be a Polish space and $\EE_0$ is a set. For any $u^0\in\EE_0$, define a rate function, $\I_{u^0}:\EE\to[0,\infty]$ (cf. \cite{ABPFD,MS,LS}). Let $\Lambda_{u^0}(s_0)$ denote the level set
	\begin{align*} 
		\Lambda_{u^0}(s_0):=\big\{f\in\EE:\I_{u^0}(\psi)\leq s_0\big\},
	\end{align*} 
	for $s_0\in[0,\infty]$. Furthermore, let $\mathscr{T}$ be the collection of subsets of  $\EE_0$ and $a(\e)$ be a function which converges to 0 as $\e\to0$.  
	
	If $h:=h(t,x)$ is a random field and $\EE$ is a function space (will fix later), then $h$-a.s. in $\EE$ means that $h$ has a stochastic version in $\EE$-a.s. For any metric space $(\EE,d)$, the distance between an element $h\in\EE$ and a set $\B\subset \EE$ is defined by 
	\begin{align*}
		\mathrm{dist}_{\EE}(h,\B):=\inf_{g\in\B}d (h,g).
	\end{align*}
	\begin{definition}[FWULDP]\label{def3.1}
		A family of $\EE$-valued random variables $\{X_{\e,u^0}\}_{u^0\in\EE_0,\e\in(0,1]}$ is said to satisfy a \emph{Freidlin-Wentzell uniform large deviation principle} with the speed $a(\e)$ and the rate function $\I_{u^0}(\cdot)$, uniformly over $\mathscr{T}$ if the following hold:
		\begin{enumerate}
			\item For any $\A\in\mathscr{T},\ t_0\geq 0,$ and $\eta>0$
			\begin{align*}
				\liminf_{\e\to0}\inf_{u^0\in\A}\inf_{\psi\in\Lambda_{u^0}(t_0)}\bigg\{a(\e)\log\P\big(d (X_{\e,u^0},\psi)<\eta\big)+\I_{u^0}(\psi)\bigg\}\geq 0.
			\end{align*}\item For any $\A\in\mathscr{T},\ t_0\geq 0,$ and $\eta>0$
			\begin{align*}
				\limsup_{\e\to0}\sup_{u^0\in\A}\sup_{s\in[0,t_0]}\bigg\{a(\e)\log\P\big(\mathrm{dist}_{\EE} (X_{\e,u^0},\Lambda_{u^0}(s))\geq\eta\big)+s\bigg\}\leq 0.
			\end{align*}
		\end{enumerate}	
	\end{definition}
	One should note that FWULDP depends on the choice of the Polish space $\EE$.	Recall that  a family $\B\subset \C_b(\EE)$ of functions from $\EE\to\R$ is equibounded and equicontinuous if 
	\begin{align*}
		\sup_{h\in\B}\sup_{\psi\in\EE}|h(\psi)| <\infty, \ \ \text{ and }\ \ \lim_{\eta\to0}\sup_{h\in\B}\sup_{\substack{f_1,f_2\in\EE,\\d(\psi_1,\psi_2)<\eta}}|h(\psi_1)-h(\psi_2)|=0.
	\end{align*}
	\begin{definition}[EULP]\label{def3.2}
		A family of $\EE$-valued random variables $\{X_{\e,u^0}\}_{u^0\in\EE_0,\e\in(0,1]}$ is said to satisfy an \emph{equicontinuous uniform Laplace principle} with the speed $a(\e)$ and the rate function $\I_{u^0}(\cdot)$, uniformly over $\mathscr{T}$ if for any $\A\in\mathscr{T}$ and any equibounded and equicontinuous family $\B\subset\C_b(\EE)$, 
		\begin{align}\label{3.3}
			\lim_{\e\to0}\sup_{h\in\B}\sup_{u^0\in\A}\bigg|a(\e)\log\E \bigg[\exp\bigg(-\frac{h(X_{\e,u^0})}{a(\e)}\bigg)\bigg]+\inf_{f\in\EE}\big\{h(f)+\I_{u^0}(\psi)\big\}\bigg|=0.
		\end{align}
	\end{definition}
	An equivalence between   ULDP and  EULP is established in \cite{MS}. 
	\begin{theorem}[Theorem 2.10, \cite{MS}]\label{thrm3.3}
		EULP and FWULDP are equivalent.
	\end{theorem}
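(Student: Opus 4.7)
The plan is to adapt the classical Varadhan--Bryc equivalence between the large deviation principle and the Laplace principle to the present uniform setting, where the rate function $\I_{u^0}$ and the law of $X_{\e,u^0}$ both depend on the parameter $u^0 \in \A \in \mathscr{T}$. Equiboundedness and equicontinuity of the test-function family $\B$ provide exactly the slack needed to control the two approximation errors (truncation of $\|h\|_\infty$ and discretization of level sets) uniformly in $h$ and $u^0$.

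For the direction FWULDP $\Longrightarrow$ EULP, I fix $\A \in \mathscr{T}$ and an equibounded equicontinuous family $\B$ with $C_0 := \sup_{h \in \B}\|h\|_\infty$ and common modulus of continuity $\omega$. The Laplace upper bound is obtained by Varadhan partitioning: choose $s_0 > C_0$, partition $[0,s_0]$ by levels $0 = t_0 < \cdots < t_N = s_0$, and bound
\begin{align*}
\E\bigl[e^{-h(X_{\e,u^0})/a(\e)}\bigr] \leq \sum_{i=1}^{N} e^{-t_{i-1}/a(\e)}\,\P\bigl(\mathrm{dist}_{\EE}(X_{\e,u^0},\Lambda_{u^0}(t_i)) < \eta\bigr) + e^{C_0/a(\e)}\P\bigl(\mathrm{dist}_{\EE}(X_{\e,u^0},\Lambda_{u^0}(s_0)) \geq \eta\bigr).
\end{align*}
Equicontinuity replaces $h$ on $(\Lambda_{u^0}(t_i))_\eta$ by $\inf_{\Lambda_{u^0}(t_i)} h + \omega(\eta)$, and Definition \ref{def3.1}(2) makes the tail probability exponentially small, uniformly in $u^0$; letting $N \to \infty$, $s_0 \to \infty$ and $\eta \to 0$ yields the desired bound. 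The Laplace lower bound follows by selecting a near-minimizer $\psi_* = \psi_*(h,u^0)$ of $h(\psi) + \I_{u^0}(\psi)$ with $\I_{u^0}(\psi_*) \leq 2 C_0$, restricting the expectation to the ball $B(\psi_*,\eta)$, and applying Definition \ref{def3.1}(1) at level $t_0 = 2 C_0$; equicontinuity absorbs the $\omega(\eta)$ error uniformly in $(h,u^0)$.

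For the reverse direction EULP $\Longrightarrow$ FWULDP, the key task is to manufacture equibounded equicontinuous test-function families that probe the events in Definition \ref{def3.1}. For the upper bound I consider, for each $(s,u^0) \in [0,t_0] \times \A$ and parameter $M > 0$, the Lipschitz cutoff
\begin{align*}
h_{s,u^0,M}(f) := M \Bigl(1 \wedge \eta^{-1}\,\mathrm{dist}_{\EE}(f, \Lambda_{u^0}(s))\Bigr),
\end{align*}
which is equibounded by $M$ with modulus of continuity $M/\eta$ uniformly in $(s,u^0)$. Applying the EULP to $\{h_{s,u^0,M}\}$ and using $\inf_f\{h_{s,u^0,M}(f) + \I_{u^0}(f)\} \leq s$ together with the Markov-type estimate $\P(\mathrm{dist}_{\EE}(X_{\e,u^0},\Lambda_{u^0}(s)) \geq \eta) \leq e^{M/a(\e)}\E[e^{-h_{s,u^0,M}(X_{\e,u^0})/a(\e)}]$, then sending $M \to \infty$, yields Definition \ref{def3.1}(2). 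The lower bound is analogous: for each $\psi \in \Lambda_{u^0}(t_0)$ take $h_{\psi,M}(f) := M(1 \wedge d(f,\psi)/\eta)$; the EULP applied to this family, combined with $\inf_f\{h_{\psi,M}(f) + \I_{u^0}(f)\} \leq \I_{u^0}(\psi)$, delivers Definition \ref{def3.1}(1).

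The main obstacle is the construction of test-function families in the reverse direction whose modulus of continuity is genuinely independent of $u^0$: naïve indicator choices are discontinuous, so one is forced to use Lipschitz truncations of the distance function, and the interplay between the truncation height $M$ (needed to extract the exponential rate) and the inevitable loss $M/\eta$ in the modulus of continuity is the delicate point. This is precisely why the equicontinuity condition of Definition \ref{def3.2}, rather than mere continuity of each $h$, is the correct structural hypothesis that makes the Bryc-type inversion go through uniformly in both $h$ and $u^0$.
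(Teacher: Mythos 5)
You should first note that the paper contains no proof of this statement: Theorem \ref{thrm3.3} is quoted verbatim from Theorem 2.10 of \cite{MS}, so the only benchmark is Salins' argument, whose architecture (Varadhan-type partitioning over level sets for FWULDP $\Rightarrow$ EULP, and a Bryc-type inversion with Lipschitz cutoffs of distance functions for the converse, with equiboundedness/equicontinuity supplying the uniformity in $h$ and $u^0$) you have correctly identified. Your forward direction and your reverse-direction lower bound are essentially sound: in particular $h_{\psi,M}(f)=M(1\wedge d(f,\psi)/\eta)$ gives $\E[e^{-h_{\psi,M}(X_{\e,u^0})/a(\e)}]\le\P(d(X_{\e,u^0},\psi)<\eta)+e^{-M/a(\e)}$ together with $\inf_f\{h_{\psi,M}(f)+\I_{u^0}(f)\}\le\I_{u^0}(\psi)\le t_0$, so taking $M>t_0$ yields Definition \ref{def3.1}(1) uniformly. (One caveat in the forward direction: your displayed partition inequality drops the $h$-dependence from the prefactors. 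The factor $e^{-t_{i-1}/a(\e)}$ should arise from $\P(\mathrm{dist}_{\EE}(X_{\e,u^0},\Lambda_{u^0}(t_{i-1}))\ge\eta)$ via Definition \ref{def3.1}(2), while the deterministic prefactor on the $i$-th piece should be $e^{(\omega(\eta)-\inf_{\Lambda_{u^0}(t_i)}h)/a(\e)}$; as literally written the inequality fails, e.g.\ for a constant negative $h$, although your surrounding prose describes the correct mechanism.)

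The genuine gap is in the reverse-direction upper bound. Your test function $h_{s,u^0,M}(f)=M(1\wedge\eta^{-1}\mathrm{dist}_{\EE}(f,\Lambda_{u^0}(s)))$ vanishes on $\Lambda_{u^0}(s)$, which is exactly where $\I_{u^0}$ attains values down to $0$; hence $\inf_f\{h_{s,u^0,M}(f)+\I_{u^0}(f)\}$ can be $0$, and the inequality you invoke, $\inf_f\{h_{s,u^0,M}(f)+\I_{u^0}(f)\}\le s$, points the wrong way. Feeding it into your Markov estimate gives $\P(\mathrm{dist}_{\EE}(X_{\e,u^0},\Lambda_{u^0}(s))\ge\eta)\le e^{M/a(\e)}\E[e^{-h_{s,u^0,M}(X_{\e,u^0})/a(\e)}]\approx e^{(M-\inf_f\{h+\I_{u^0}\})/a(\e)}$, which is at least $e^{(M-s)/a(\e)}$ and diverges as $M\to\infty$, instead of producing the required decay $e^{-s/a(\e)}$. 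The fix is to invert the cutoff: with $F_{s,u^0}:=\{f:\mathrm{dist}_{\EE}(f,\Lambda_{u^0}(s))\ge\eta\}$ set $g(f):=M(1\wedge\mathrm{dist}_{\EE}(f,F_{s,u^0})/\eta')$ for some $0<\eta'<\eta$. Then $g\equiv0$ on $F_{s,u^0}$, so $\P(X_{\e,u^0}\in F_{s,u^0})\le\E[e^{-g(X_{\e,u^0})/a(\e)}]$ with no spurious factor $e^{M/a(\e)}$, while every $f$ with $g(f)<M$ satisfies $\mathrm{dist}_{\EE}(f,\Lambda_{u^0}(s))\ge\eta-\eta'>0$ and hence $\I_{u^0}(f)>s$, which yields the needed lower bound $\inf_f\{g(f)+\I_{u^0}(f)\}\ge s\wedge M$. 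Choosing $M>t_0$ and applying the EULP to this family (still equibounded by $M$ and uniformly $M/\eta'$-Lipschitz in $(s,u^0)$, with the diagonal choice $h=g$ when the initial datum is $u^0$ dominated by the double supremum in Definition \ref{def3.2}) then gives Definition \ref{def3.1}(2).
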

	Let us provide a sufficient condition for a family of random variables to satisfy  EULP. Then by Theorem \ref{thrm3.3} we obtain FWULDP. For any $M>0$ and $T>0$, let $\PP_2^M$ be the collection of $\mathscr{F}_t$-adapted processes such that $\P(\|\phi\|_{\L^2([0,T]\times[0,1])}\leq M)=1$ (see Section \ref{Sec4} also). 
	
	\begin{condition}\label{cond}
		Assume that there exists a family of measurable maps $\mathscr{G}_\e:\EE_0\times\C([0,T]\times[0,1])\to\EE$, indexed by $\e\in(0,1]$, $u^0\in\EE_0$. Let $\W^\Q(\cdot,\cdot)$ be the noise which is white in time and colored in space and $X_{\e,\phi}:=\mathscr{G}_\e\big(u^0,\sqrt{\e}\W^\Q+\int_0^{\cdot}\int_0^{\cdot}\phi(s,y)\d y\d s\big)$. Define $\mathscr{G}_{0}:\EE_0\times\C([0,T]\times[0,1])\to\EE$, the limiting case of $\mathscr{G}_\e$ as $\e\to0$. Let $\mathscr{T}$ be collection  of the subset of $\EE_0$ such that for any $\B\in \mathscr{T}, \ M>0$ and $\eta>0$, 
		\small{	\begin{align}\label{3.4}
				&	\lim_{\e\to0}\sup_{u^0\in\B}\sup_{\phi\in\PP_2^M}\P\left\{d\bigg(\mathscr{G}_\e\bigg(u^0,\sqrt{\e}\W^\Q+\int_0^{\cdot}\int_0^{\cdot}\phi(s,y)\d y\d s\bigg),\mathscr{G}_{0}\bigg(u^0,\int_0^{\cdot}\int_0^{\cdot}\phi(s,y)\d y\d s\bigg)\bigg)>\eta\right\}\nonumber\\&\hspace{10cm}=0.
		\end{align}}
	\end{condition}

	\begin{theorem}[Theorem 2.13, \cite{MS}]\label{thrm3.4}
		Assume that Condition \ref{cond} holds. Then, the family $X_{\e,u^0}:=\mathscr{G}_\e(u^0,\sqrt{\e}\W^\Q)$ satisfies the EULP uniformly over $\mathscr{T}$ with the speed $a(\e)=\e$, and the rate function $\I_{u^0}:\EE\to\R$ given in \eqref{CE3}. 
	\end{theorem}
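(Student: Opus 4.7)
My approach would follow the weak-convergence methodology of Budhiraja--Dupuis--Maroulas combined with Salins's refinements that track uniformity over both $u^0$ and $h$. The starting point is the Bou\'e--Dupuis variational representation: since $X_{\e,u^0}=\mathscr{G}_\e(u^0,\sqrt{\e}\W^\Q)$ is a measurable functional of the colored noise, for every bounded Borel $h:\EE\to\R$,
\begin{align*}
-\e\log\E\Big[e^{-h(X_{\e,u^0})/\e}\Big]=\inf_{\phi\in\PP_2}\E\bigg[\frac12\int_0^T\!\!\int_0^1|\phi(s,y)|^2\d y\d s+h\Big(\mathscr{G}_\e\big(u^0,\sqrt{\e}\W^\Q+\Phi\big)\Big)\bigg],
\end{align*}
where $\Phi(t,x):=\int_0^t\!\int_0^x\phi(s,y)\d y\d s$. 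A parallel deterministic identity gives
\begin{align*}
\inf_{f\in\EE}\{h(f)+\I_{u^0}(f)\}=\inf_{\phi\in\L^2([0,T]\times[0,1])}\Big\{\tfrac12\|\phi\|_{\L^2}^2+h\big(\mathscr{G}_0(u^0,\Phi)\big)\Big\}.
\end{align*}
The goal is then to drive the two sides together, uniformly in $h\in\B$ and $u^0\in\A$.

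First, I would truncate the infima. Because $C_0:=\sup_{h\in\B}\|h\|_\infty<\infty$ by equiboundedness, any control with $\tfrac12\|\phi\|_{\L^2}^2>2C_0$ overshoots the cost-zero control $\phi\equiv 0$, so in both representations the infimum can be restricted, up to an error that vanishes with $\e$, to $\phi\in\PP_2^M$ (resp.\ $\phi\in\UU^M$) for some $M=M(\B)$. Next, equicontinuity of $\B$ supplies, for every $\eta'>0$, an $\eta>0$ such that $d(\psi_1,\psi_2)<\eta$ implies $|h(\psi_1)-h(\psi_2)|<\eta'$ simultaneously for all $h\in\B$. Splitting expectations on the event $\{d(\mathscr{G}_\e(u^0,\sqrt{\e}\W^\Q+\Phi),\mathscr{G}_0(u^0,\Phi))<\eta\}$ and its complement, using the modulus on the first part and the uniform bound $2C_0$ on the second, and invoking Condition~\ref{cond} on the control set $\PP_2^M$, I obtain
\begin{align*}
\lim_{\e\to 0}\sup_{h\in\B}\sup_{u^0\in\A}\sup_{\phi\in\PP_2^M}\Big|\E h\big(\mathscr{G}_\e(u^0,\sqrt{\e}\W^\Q+\Phi)\big)-\E h\big(\mathscr{G}_0(u^0,\Phi)\big)\Big|=0.
\end{align*}

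Finally, since $\mathscr{G}_0(u^0,\Phi)$ is deterministic whenever $\phi$ is deterministic, a measurable-selection argument (choosing near-optimal $\wi\phi_\e\in\PP_2^M$ in the stochastic representation and freezing the randomness) lets me replace $\inf_{\phi\in\PP_2^M}\E[\cdot]$ by $\inf_{\phi\in\UU^M}[\cdot]$ on the Bou\'e--Dupuis side, modulo an error bounded by the previous display. Assembling the truncation, the uniform convergence of expectations, and the infimum equivalence gives
\begin{align*}
\limsup_{\e\to 0}\sup_{h\in\B}\sup_{u^0\in\A}\Big|-\e\log\E e^{-h(X_{\e,u^0})/\e}-\inf_{f\in\EE}\{h(f)+\I_{u^0}(f)\}\Big|=0,
\end{align*}
which is precisely the EULP \eqref{3.3}. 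Theorem~\ref{thrm3.3} then upgrades the EULP to the FWULDP automatically.

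The main obstacle in this plan is not any individual step, all of which are classical in the fixed-$u^0$, fixed-$h$ setting, but rather propagating uniformity through the measurable-selection and truncation arguments. In particular, one has to ensure that the near-minimizers $\wi\phi_\e$ can be chosen so that the family $\{\wi\phi_\e(u^0,\omega)\}_{u^0\in\A}$ lies in a common $\PP_2^M$ and that Condition~\ref{cond}, which is stated uniformly over $u^0\in\A$ and over $\phi\in\PP_2^M$, is strong enough to absorb the $u^0$-dependence of these near-minimizers; this is precisely what distinguishes EULP from the classical Laplace principle and is the reason Condition~\ref{cond} is formulated with the double supremum over $u^0$ and $\phi$.
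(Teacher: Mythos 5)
This theorem is quoted verbatim from [MS, Theorem 2.13] and the paper supplies no proof of it, so there is no in-paper argument to compare against; your reconstruction via the Bou\'e--Dupuis/Budhiraja--Dupuis variational representation, the truncation of controls to a common $\PP_2^M$ using equiboundedness of $\B$, and the uniform-in-$(u^0,\phi)$ convergence in probability supplied by Condition~\ref{cond} is precisely the argument of that reference, and it is correct. Two minor refinements: the ``measurable selection'' you invoke is not actually needed --- for the lower bound one uses that for almost every $\omega$ the frozen control $\phi_\e(\omega)$ is an admissible deterministic control, so that $\tfrac12\|\phi_\e(\omega)\|_{\L^2([0,T]\times[0,1])}^2+h\big(\mathscr{G}_0(u^0,\Phi_\e(\omega))\big)\geq \inf_{f\in\EE}\{h(f)+\I_{u^0}(f)\}$ holds pathwise and one simply takes expectations --- and the restriction of the infimum to $\PP_2^M$ requires the standard stopping-time localization of a near-optimal control (stopping when its accumulated energy reaches $M$) rather than the bare Chebyshev remark, though this is routine.
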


	\subsection{A priori bound on the controlled process}
	In this subsection, we introduce the controlled and skeleton systems. For any $\e\in(0,1]$ and the initial data $u^0\in\L^p([0,1])$, for $p>\max\{6,2\delta+1\}$, there exists a measurable mapping $\mathscr{G}_\e:\L^p([0,1])\times\C([0,T]\times[0,1])\to\C([0,T];\L^p([0,1]))$ such that $u_\e:=\mathscr{G}_\e(u^0,\sqrt{\e}\W^\Q)$.

	Let us consider the following controlled SGBH equation:
	\begin{align}\label{3.7}\nonumber
		\frac{\partial u_{\e,\phi}(t,x)}{\partial t}&=\nu\frac{\partial^2 u_{\e,\phi}(t,x)}{\partial x^2}+\frac{\alpha}{\delta+1} \frac{\partial }{\partial x}\p(u_{\e,\phi}(t,x))+\beta \c(u_{\e,\phi}(t,x))+g(t,x,u_{\e,\phi}(t,x))\phi(t,x)\\&\quad +\sqrt{\e}g(t,x,u_{\e,\phi}(t,x))\frac{\partial^2\W^\Q(t,x)}{\partial x\partial t},
	\end{align} 
with initial and boundary conditions given in \eqref{1.2} and $g$ satisfying Hypothesis \ref{hyp1}. 
	
	The global solvability of above controlled SGBH equation can be obtained from Theorem \ref{thrmE2},  with the help of Girsonov's theorem (see Theorem \ref{thrmLDP1}).  
	The process $u_{\e,\phi}(\cdot)$ satisfies the following SCI equation \eqref{CE1}. 
If the noise is absent in the equation \eqref{CE1}, then the equation reduces to the skeleton equation \eqref{CE2} and the solution is denoted by $u_{0,\phi}(\cdot)$. Finally, for any $\psi\in\C([0,T];\L^p([0,1]))$ given in \eqref{CE4}, the   rate function $\I_{u^0}(\psi)$ is defined by \eqref{CE3}. 
	

	\begin{theorem}[Existence and uniqueness of the controlled process]\label{ExistenceofControled}
		Let $\phi\in \PP_{2}^{M},$ for some $M \in \N$. For $\e\geq 0$ and $u^0\in \L^p([0,1])$, for $p> \max\{6, 2\delta+1\}$ define a solution mapping $\mathscr{G}_\e$ such that
		\begin{align*}
			\displaystyle u_{\e,\phi}:=\mathscr{G}_\e\left(u^0,\sqrt{\e}\W^\Q+\int_0^{\cdot}\int_0^{\cdot}\phi(s,y)\d y\d s\right),
		\end{align*}  
		then $u_{\e,\phi}(\cdot)$ is the unique solution of the SCI equation \eqref{CE1}.
	\end{theorem}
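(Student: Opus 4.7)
The plan is to prove the statement by separately handling the two regimes $\e>0$ and $\e=0$, since they require different tools. For $\e>0$ the argument is probabilistic and relies on Girsanov's theorem in the form used in Theorem \ref{thrmLDP1}, while for $\e=0$ the equation is deterministic and directly fits the framework of Theorem \ref{thrmLDP3}. In both cases the existence and pathwise uniqueness of the process $u_{\e,\phi}$ solving \eqref{CE1} have essentially been established in those two earlier theorems, so the present statement is primarily a consolidation together with the identification of a measurable solution mapping $\mathscr{G}_\e$.

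For $\e>0$, since $\phi\in\PP_2^M$ the control is a.s.\ bounded in $\L^2([0,T]\times[0,1])$ by $\sqrt{M}$, which is more than enough to make the exponential
\begin{equation*}
\mathscr{Z}_\e:=\exp\bigg\{-\frac{1}{\sqrt{\e}}\int_0^T\!\!\int_0^1\phi(s,y)\,\W^\Q(\d s,\d y)-\frac{1}{2\e}\int_0^T\!\!\int_0^1|\phi(s,y)|^2\,\d y\,\d s\bigg\}
\end{equation*}
a true martingale. Defining $\vi\P$ by $\d\vi\P/\d\P:=\mathscr{Z}_\e$, Girsanov's theorem ensures that the shifted process $\vi\W^\Q(t,x):=\W^\Q(t,x)+\frac{1}{\sqrt{\e}}\int_0^t\!\!\int_0^x\phi(s,y)\,\d y\,\d s$ is a $\vi\P$-white in time and colored in space noise with the same covariance structure as $\W^\Q$. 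Rewriting \eqref{CE1} in terms of $\vi\W^\Q$ absorbs the drift term involving $\phi$ into the Walsh stochastic integral and reduces the SCI equation to the uncontrolled mild equation \eqref{2.2} on $(\Omega,\mathscr{F},\{\mathscr{F}_t\},\vi\P)$. Theorem \ref{thrmE2} then provides a unique $\L^p$-valued, $\mathscr{F}_t$-adapted continuous solution under $\vi\P$, and the equivalence of $\vi\P$ and $\P$ transports both existence and pathwise uniqueness back to the original measure.

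For $\e=0$ the stochastic integral disappears and \eqref{CE1} collapses to the integral equation \eqref{CE2}. Since $\phi(\omega,\cdot,\cdot)\in\L^2([0,T]\times[0,1])$ for $\P$-a.e.\ $\omega$, Theorem \ref{thrmLDP3} yields the unique pathwise solution via the contraction mapping argument on the Banach space $\mathcal{H}$ used in Theorem \ref{thrmE1}; the continuous dependence built into that fixed-point construction makes the resulting map jointly measurable in $(u^0,\phi)$.

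The main technical issue is the existence of a jointly measurable realisation of the solution operator $\mathscr{G}_\e:\L^p([0,1])\times\C([0,T]\times[0,1])\to\C([0,T];\L^p([0,1]))$ so that the identity $u_{\e,\phi}=\mathscr{G}_\e(u^0,\sqrt{\e}\W^\Q+\int_0^\cdot\!\!\int_0^\cdot\phi(s,y)\,\d y\,\d s)$ holds almost surely. This will be handled by a standard Yamada--Watanabe argument: pathwise uniqueness of \eqref{CE1}, combined with the weak existence obtained through Girsanov, produces a measurable functional of the driving path on the canonical space $\C([0,T]\times[0,1])$, which upon pullback to $\Omega$ provides the desired measurable map $\mathscr{G}_\e$. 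Once this selection is fixed the statement of the theorem follows by construction, and the uniqueness of $u_{\e,\phi}$ in $\C([0,T];\L^p([0,1]))$ is inherited from Theorem \ref{thrmE2} (for $\e>0$) and Theorem \ref{thrmLDP3} (for $\e=0$).
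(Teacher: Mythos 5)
Your proposal is correct and follows essentially the same route as the paper: for $\e>0$ the paper also reduces the SCI equation \eqref{CE1} to the uncontrolled equation \eqref{2.2} via the Girsanov change of measure of Theorem \ref{thrmLDP1} and then invokes Theorem \ref{thrmE2}, while for $\e=0$ it appeals to the skeleton-equation well-posedness of Theorem \ref{thrmLDP3}. Your additional Yamada--Watanabe remark on the measurable realisation of $\mathscr{G}_\e$ fills in a detail the paper leaves implicit, but does not change the argument.
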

	
	\subsection{Main Theorems}
	In this subsection, we prove that the family of solutions $u_{\e}(\cdot)$ of the integral equation \eqref{2.2} satisfies ULDP in two different topologies, $\C([0,T];\L^p([0,1]))$, for $p>\max\{6,2\delta+1\}$ and $\C([0,T]\times [0,1])$.
	
	\begin{theorem}[ULDP in the {$\C([0,T]; \L^p([0,1]))$} topology] \label{LDPinLp} Let $p> \max\{6, 2\delta+1\}$. The family  $\{ u_{\e,\phi}\}$  satisfies a ULDP
		on $\C([0,T];\L^p([0,1]))$ with rate function $\I_{u^0}(\cdot)$ given by \eqref{CE3}, where  the uniformity is over	$\L^p([0,1])$-bounded sets of initial conditions, if the following hold:
		\begin{enumerate}
			\item For any $N>0,\ t_0\geq 0,$ and $\eta>0$
			\begin{align*}
				\liminf_{\e\to0}\inf_{\|u^0\|_{\L^p}\leq N}\inf_{\psi\in\Lambda_{u^0}(t_0)}\bigg\{\e\log\P\big(\|\displaystyle u_{\e,\phi}-\psi\|_{\C([0,T];\L^p([0,1])}<\eta\big)+\I_{u^0}(\psi)\bigg\}\geq 0.
			\end{align*}\item For any  $N>0,\ t_0\geq 0,$ and $\eta>0$
			\begin{align*}
				\limsup_{\e\to0}\sup_{\|u^0\|_{\L^p}\leq N}\sup_{s\in[0,t_0]}\bigg\{\e\log\P\big(\mathrm{dist}_{\C([0,T];\L^p([0,1]))} (u_{\e,u^0},\Lambda_{u^0}(s))\geq\eta\big)+s\bigg\}\leq 0.
			\end{align*}
		\end{enumerate}
	\end{theorem}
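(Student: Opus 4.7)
The plan is to reduce Theorem \ref{LDPinLp} to a verification of Condition \ref{cond} and then invoke the equivalence machinery that the paper has already set up. More precisely, the statements (1) and (2) in the theorem are simply the $\mathscr{T}$-uniform Freidlin--Wentzell LDP in Definition \ref{def3.1}, with $\EE=\C([0,T];\L^p([0,1]))$, $\EE_{0}=\L^p([0,1])$, $a(\e)=\e$, and $\mathscr{T}$ the collection of $\L^p$-bounded subsets of $\EE_0$. By Theorem \ref{thrm3.3}, this is equivalent to the equicontinuous uniform Laplace principle (EULP) in the sense of Definition \ref{def3.2}, which in turn, by Theorem \ref{thrm3.4}, follows from Condition \ref{cond}. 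Thus the entire proof boils down to establishing, for every $N>0$, $M>0$, and $\eta>0$,
\begin{align}\label{eq:cond-to-verify}
\lim_{\e\to 0}\sup_{\|u^{0}\|_{\L^p}\leq N}\sup_{\phi\in\PP_2^M}\P\bigg(\|u_{\e,\phi}-u_{0,\phi}\|_{\C([0,T];\L^p([0,1]))}>\eta\bigg)=0,
\end{align}
where $u_{\e,\phi}$ solves the SCI equation \eqref{CE1} (existence from Theorem \ref{thrmLDP1}, or equivalently the cited Theorem \ref{ExistenceofControled}) and $u_{0,\phi}$ solves the skeleton equation \eqref{CE2} (Theorem \ref{thrmLDP3}).

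\textbf{Step 1 (Uniform a priori bounds).} First I would establish, using the approach of Lemma \ref{lemUE} adapted to the controlled equation, a bound of the form
\begin{align*}
\sup_{\e\in(0,1]}\sup_{\phi\in\PP_2^M}\sup_{\|u^0\|_{\L^p}\leq N}\E\bigg[\sup_{t\in[0,T]}\|u_{\e,\phi}(t)\|_{\L^p}^{p}\bigg]\leq C(N,M,T,p),
\end{align*}
by applying It\^o's formula to $|\cdot|^p$ on the controlled equation. The extra drift $g(\cdot,\cdot,u_{\e,\phi})\phi$ contributes a term bounded by $\|\phi\|_{\L^{2}([0,T]\times[0,1])}\leq M^{1/2}$ times a time-integrated $\L^p$-norm, which is absorbed by Cauchy--Schwarz and Young's inequality. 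The analogous deterministic bound for $u_{0,\phi}$ is obtained in the same way (this is presumably the content of Proposition \ref{PrioriEstimate}). These bounds, uniform over $u^0$ in $\L^p$-balls and $\phi\in\PP_2^M$, replace the usual compactness-of-initial-data step and are the crucial ingredient that lets the argument work over non-compact bounded sets.

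\textbf{Step 2 (Difference equation and the stochastic term).} Subtracting \eqref{CE2} from \eqref{CE1} and writing the difference in mild form yields four contributions: the convective difference $\mathscr{A}_2$-type term, the reaction difference $\mathscr{A}_3$-type term, the control-drift difference, and the purely stochastic term $\sqrt{\e}\int_{0}^{t}\int_{0}^{1}G(t-s,x,y)g(s,y,u_{\e,\phi}(s,y))\W^{\Q}(\d s,\d y)$. For the first three, the estimates mirror \eqref{CE12}, \eqref{CE14}, \eqref{CE17}--\eqref{CE18}: each is controlled by a time integral of $\|u_{\e,\phi}-u_{0,\phi}\|_{\L^p}$, with prefactors depending polynomially on $\sup_{s}(\|u_{\e,\phi}(s)\|_{\L^p}+\|u_{0,\phi}(s)\|_{\L^p})^{2\delta}$, which are controlled in high-moment sense by Step 1. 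For the stochastic term, the computation in \eqref{E9} together with Step 1 gives, for $p\geq 2\delta+1$,
\begin{align*}
\E\bigg[\sup_{t\in[0,T]}\bigg\|\sqrt{\e}\int_{0}^{t}\int_{0}^{1}G(t-s,\cdot,y)g(s,y,u_{\e,\phi}(s,y))\W^{\Q}(\d s,\d y)\bigg\|_{\L^p}^{p}\bigg]\leq C(N,M,T,p)\,\e^{p/2},
\end{align*}
uniformly in $\|u^{0}\|_{\L^p}\leq N$ and $\phi\in\PP_2^M$, using the UMD-BDG inequality \eqref{18}.

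\textbf{Step 3 (Gronwall and convergence in probability).} Combining the bounds of Step 2 and taking expectations gives an inequality of the form
\begin{align*}
\E\bigg[\sup_{s\in[0,t]}\|u_{\e,\phi}(s)-u_{0,\phi}(s)\|_{\L^p}^{p}\bigg]\leq C(N,M,T,p)\,\e^{p/2}+C(N,M,T,p)\int_{0}^{t}(t-s)^{-\theta}\E\bigg[\sup_{r\in[0,s]}\|u_{\e,\phi}(r)-u_{0,\phi}(r)\|_{\L^p}^{p}\bigg]\d s,
\end{align*}
for a suitable $\theta<1$. A singular Gronwall inequality (or Bihari-type argument) then yields
\begin{align*}
\sup_{\|u^{0}\|_{\L^p}\leq N}\sup_{\phi\in\PP_2^M}\E\bigg[\sup_{t\in[0,T]}\|u_{\e,\phi}(t)-u_{0,\phi}(t)\|_{\L^p}^{p}\bigg]\leq C(N,M,T,p)\,\e^{p/2}.
\end{align*}
An application of Markov's inequality then proves \eqref{eq:cond-to-verify}, verifying Condition \ref{cond}.

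\textbf{Main obstacle.} The delicate point is that the prefactors in Step 2 involve $(\|u_{\e,\phi}\|_{\L^p}+\|u_{0,\phi}\|_{\L^p})^{2\delta}$, so a direct pathwise Gronwall argument does not close. The correct route is to first truncate by stopping times $\tau_{R}^{\e}:=\inf\{t: \|u_{\e,\phi}(t)\|_{\L^p}\vee \|u_{0,\phi}(t)\|_{\L^p}\geq R\}$, run the Gronwall argument up to $\tau_{R}^{\e}\wedge T$, and then use Step 1 together with Markov's inequality to let $R\to\infty$; the quantitative uniformity of Step 1 is precisely what allows this last step to be carried out uniformly over $\|u^{0}\|_{\L^p}\leq N$ and $\phi\in\PP_2^M$. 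Once \eqref{eq:cond-to-verify} is established, Theorem \ref{thrm3.4} gives the EULP and Theorem \ref{thrm3.3} upgrades it to the FWULDP, which is exactly the content of Theorem \ref{LDPinLp}.
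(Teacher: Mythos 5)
Your overall architecture is exactly the paper's: statements (1)--(2) are the FWULDP of Definition \ref{def3.1}, which by Theorems \ref{thrm3.3} and \ref{thrm3.4} reduces to Condition \ref{cond}, i.e.\ to the uniform convergence in probability \eqref{4.1}; and your "main obstacle" fix (truncation by stopping times before Gronwall, then removing the truncation via the uniform a priori bound of Proposition \ref{PrioriEstimate} and Markov) is the same device as the paper's indicator $\chi_{\Upsilon_P^\e}$ on the event \eqref{4.4}. Steps 1 and the drift/reaction/control estimates in Step 2 match \eqref{4.3}--\eqref{4.8}.

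The genuine gap is in your treatment of the stochastic convolution. You claim
\begin{align*}
\E\bigg[\sup_{t\in[0,T]}\Big\|\sqrt{\e}\int_0^t\int_0^1 G(t-s,\cdot,y)\,g(s,y,u_{\e,\phi}(s,y))\,\W^{\Q}(\d s,\d y)\Big\|_{\L^p}^p\bigg]\leq C\,\e^{p/2}
\end{align*}
"using the UMD-BDG inequality \eqref{18} together with \eqref{E9}". But \eqref{18} is a maximal inequality for \emph{martingales}, and the stochastic convolution $J_\e^4(t)$ is not a martingale in $t$: the kernel $G(t-s,\cdot,\cdot)$ depends on the outer time, so \eqref{E9} only yields the fixed-$t$ bound $\E[\|J_\e^4(t)\|_{\L^p}^p]\leq C$, which does not upgrade to a bound on $\E[\sup_t\|J_\e^4(t)\|_{\L^p}^p]$ without an additional argument (e.g.\ the factorization method). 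The paper deliberately avoids this: it instead uses Lemma \ref{lemE2} to get tightness of $\{J_\e^4\}$ in $\C([0,T]\times[0,1])$ (this is precisely where the hypothesis $p>6$ enters), the embedding $\C([0,T]\times[0,1])\hookrightarrow\C([0,T];\L^p([0,1]))$, Prokhorov's theorem, and the fact that multiplication by $\sqrt{\e_m}\downarrow 0$ sends a tight (hence stochastically bounded) sequence to $0$ in distribution and therefore in probability. If your supremum moment bound were available as stated, the restriction $p>6$ in the theorem would be unnecessary — the paper itself points out that in the bounded-noise case of Section \ref{ULDPBDD}, where such a supremum bound \emph{is} available (via Lemma 3.4 of \cite{IGDN}), the result holds for all $p\geq 2\delta+1$. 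So either supply a maximal inequality for the stochastic convolution valid under the linear-growth hypothesis, or replace Step 2's moment bound by the tightness-plus-Slutsky argument of Theorem \ref{UCP}.
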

	\begin{theorem}[ULDP in the {$\C([0,T]\times[0,1])$} topology]\label{LDPinC}  The family  $\{\displaystyle u_{\e,\phi}\}$  satisfies a ULDP	on $\C([0,T]\times [0,1])$ with rate function $\I_{u^0}(\cdot)$ given by \eqref{CE3}, where  the uniformity is over	$\C([0,1])$-bounded sets of initial conditions, if the following hold:
		\begin{enumerate}
			\item For any $N>0,\ t_0\geq 0,$ and $\eta>0$
			\begin{align*}
				\liminf_{\e\to0}\inf_{\|u^0\|_{ \C([0,1])}\leq N}\inf_{\psi\in\Lambda_{u^0}(t_0)}\bigg\{\e\log\P\big(\|\displaystyle u_{\e,\phi}-\psi\|_{\C([0,T]\times[0,1])}<\eta\big)+\I_{u^0}(\psi)\bigg\}\geq 0.
			\end{align*}\item For any  $N>0,\ t_0\geq 0,$ and $\eta>0$
			\begin{align*}
				\limsup_{\e\to0}\sup_{\|u^0\|_{ \C([0,1])}\leq N}\sup_{s\in[0,t_0]}\bigg\{\e\log\P\big(\mathrm{dist}_{\C([0,T]\times[0,1])} (u_{\e,u^0},\Lambda_{u^0}(s))\geq\eta\big)+s\bigg\}\leq 0.
			\end{align*}
		\end{enumerate}
	\end{theorem}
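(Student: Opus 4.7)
The plan is to verify Condition \ref{cond} in the stronger topology, taking $\EE=\C([0,T]\times[0,1])$, $\EE_0=\C([0,1])$, and $\mathscr{T}$ the collection of $\C([0,1])$-bounded sets; once this is done, Theorem \ref{thrm3.4} delivers the EULP and Theorem \ref{thrm3.3} translates it into the FWULDP claimed in Theorem \ref{LDPinC}. The task therefore reduces to establishing, for every $N,M>0$ and $\eta>0$,
\begin{equation*}
\lim_{\e\downarrow 0}\ \sup_{\|u^0\|_{\C([0,1])}\leq N}\ \sup_{\phi\in\PP_2^M}\ \P\!\Bigl\{\|u_{\e,\phi}-u_{0,\phi}\|_{\C([0,T]\times[0,1])}>\eta\Bigr\}=0.
\end{equation*}

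First, I would upgrade the $\L^p$ a priori estimates from Lemma \ref{lemUE} and Theorem \ref{thrmE2} to a uniform sup-norm estimate: by Remark \ref{rem3.6} the solutions already admit continuous modifications in $\C([0,T]\times[0,1])$ when $u^0\in\C([0,1])$, and combining the Green function bounds (Lemmas \ref{lemma3.1}, \ref{lemma3.2}, \ref{lemma3.6}) with a Kolmogorov--Chentsov argument in the spirit of Lemma \ref{lemE2} applied to the stochastic convolution, I would show
\begin{equation*}
\sup_{\e\in(0,1]}\ \sup_{\|u^0\|_{\C([0,1])}\leq N}\ \sup_{\phi\in\PP_2^M}\ \E\bigl[\|u_{\e,\phi}\|^p_{\C([0,T]\times[0,1])}\bigr]\leq C(N,M,T,p),
\end{equation*}
together with the analogous deterministic bound on $\|u_{0,\phi}\|_{\C([0,T]\times[0,1])}$.

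Subtracting the mild formulations \eqref{CE1} and \eqref{CE2} expresses $u_{\e,\phi}-u_{0,\phi}$ as a sum of (i) the convection difference involving $\partial_y G$, (ii) the reaction difference involving $G$, (iii) the control-driven difference involving $\phi$, and (iv) the stochastic convolution $\sqrt{\e}\,\mathscr{A}_4(u_{\e,\phi})$. Bounding (i)--(iii) in $\|\cdot\|_{\C([0,1])}$ at time $t$ by a weakly-singular convolution of $\|u_{\e,\phi}(s)-u_{0,\phi}(s)\|_{\C([0,1])}$---with coefficients depending polynomially on the (uniformly-in-probability bounded) sup-norms of $u_{\e,\phi}$, $u_{0,\phi}$, and for (iii) linearly on $M^{1/2}$ via Cauchy--Schwarz and the Lipschitz continuity of $g$ from Hypothesis \ref{hyp1}---a singular Gronwall inequality reduces the problem to proving that $\sqrt{\e}\|\mathscr{A}_4(u_{\e,\phi})\|_{\C([0,T]\times[0,1])}\to 0$ in probability, uniformly over the parameters. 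For this, I would rerun the Hölder moment estimates \eqref{TE3}, \eqref{TE5}, \eqref{TE6} of Lemma \ref{lemE2}: since they depend only on the linear growth of $g$ and the uniform $\L^p$ bound on $u_{\e,\phi}$---both uniform in $(u^0,\phi)$ in the prescribed classes---Markov's inequality coupled with Kolmogorov's continuity theorem (for $p>10$) completes the verification.

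The main obstacle will be securing the uniform sup-norm a priori bound in $\phi\in\PP_2^M$: in $\C([0,T]\times[0,1])$ one loses Young's convolution inequality available in $\L^p$, and the polynomial nonlinearity $u^{2\delta+1}$ against the singular kernel $\partial_y G$ has to be handled via a bootstrap argument. Tracking that the control $\phi$, though only $\L^2$-bounded, enters linearly so that Cauchy--Schwarz pairs it with the quadratic-in-$G$ integrand is the key device, and ensuring that every constant remains independent of $\phi$ and of $u^0$ in the prescribed $\C([0,1])$-ball is the delicate point.
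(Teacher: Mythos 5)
Your overall framework is the right one --- verify Condition \ref{cond} with $\EE=\C([0,T]\times[0,1])$ and $\EE_0=\C([0,1])$, then invoke Theorems \ref{thrm3.4} and \ref{thrm3.3} --- and this is exactly what the paper does, reducing everything to the uniform convergence in probability of $u_{\e,\phi}-u_{0,\phi}$ in the sup norm (Theorem \ref{ContinuousCase}). Where you diverge is in how that convergence is obtained. You propose a direct, quantitative route: a uniform sup-norm a priori bound on $u_{\e,\phi}$, a term-by-term estimate of the difference of the mild formulations in $\|\cdot\|_{\C([0,1])}$, and a singular Gronwall inequality, leaving only the stochastic convolution $\sqrt{\e}\,J_\e^4$ to be killed by tightness. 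The paper instead runs a soft compactness argument: it proves tightness of the family $\{u_{\e,\phi}-u_{0,\phi}\}$ itself in $\C([0,T]\times[0,1])$ (Lemma \ref{TightnessInC}, using the H\"older estimates of Lemmas \ref{lemma3.1}--\ref{lemma3.6} fed only by boundedness \emph{in probability} of the $\L^q$ norms from Corollary \ref{UCPC}, plus the estimates of \cite{IGCR} for the control term $J_\e^3$), extracts a distributional limit $u^*$ by Prokhorov, and identifies $u^*=0$ via the embedding $\C([0,T]\times[0,1])\hookrightarrow\C([0,T];\L^p([0,1]))$ and the already-proven $\L^p$ result (Theorem \ref{UCP}); convergence in distribution to a constant then upgrades to convergence in probability. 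The paper's route buys economy: no sup-norm Gronwall and no sup-norm moment bound are ever needed, and the $\L^p$ theorem is recycled wholesale. Your route buys a more explicit rate-type control but at the cost of the step you yourself flag as delicate: the displayed uniform bound on $\E[\|u_{\e,\phi}\|^p_{\C([0,T]\times[0,1])}]$ is provable (it is not circular --- the nonlinear terms $I_2,I_3$ in sup norm are controlled by $\sup_t\|u_{\e,\phi}(t)\|_{\L^q}^{\delta+1}$ and $\sup_t\|u_{\e,\phi}(t)\|_{\L^q}^{2\delta+1}$ via Lemmas \ref{lemma3.1}--\ref{lemma3.2} with $p=\infty$, so one only needs Proposition \ref{PrioriEstimate} with sufficiently large exponents, available since $u^0\in\C([0,1])\subset\L^r$ for all $r$), but it is strictly more than you need; for the Gronwall step it suffices, and is cleaner, to localize to the event $\Upsilon_P^\e$ where the sup norms are bounded by $P$ (as the paper does in the $\L^p$ setting) and use only boundedness in probability. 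With that adjustment your argument goes through and constitutes a valid alternative proof.
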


	\begin{proposition}[Global estimate]\label{PrioriEstimate}
		For any $p>\max\{6, 2\delta+1\}$ there exists a constant $C>0$  depending on $p, M, $ and $T$, such that whenever $\e\in (0,1], u^0\in \L^p([0,1])$ and $\phi\in \PP_2^{M},$
		\begin{align}\label{48}
			\nonumber
			\mathbb{E}\bigg[\sup_{t\in[0,T]}\|u_{\epsilon, \phi}(t)\|_{\L^p}^{p}+&\int_{0}^{T}\||u_{\epsilon, \phi}(s)|^{\frac{p-2}{p}}\partial_x u_{\epsilon, \phi}(s)\|_{\L^2}^{2}ds+\int_{0}^{T}\|u_{\epsilon, \phi}(s)\|_{\L^{p+2\delta}}^{p+2\delta}ds\bigg]\\& \quad \leq C(1+\|u^0\|_{\L^p}^{p}).	\end{align}
	\end{proposition}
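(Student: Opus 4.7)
I will mirror the proof of Lemma \ref{lemUE}: apply It\^o's formula to $|u_{\e,\phi}(\cdot,x)|^p$ via Lemma 4.2 of \cite{IG} and integrate in $x\in[0,1]$. Every term that arose in Lemma \ref{lemUE} reappears here with an identical bound: the dissipation $-\nu p(p-1)\int_0^t\||u_{\e,\phi}|^{(p-2)/2}\partial_x u_{\e,\phi}\|_{\L^2}^2\,\d s$ moves to the LHS, the convective contribution vanishes by the identity $(\p(u),|u|^{p-2}u)=0$, the reaction piece produces $-\tfrac{p\beta}{2}\int_0^t\|u_{\e,\phi}\|_{\L^{p+2\delta}}^{p+2\delta}\,\d s+C\int_0^t\|u_{\e,\phi}\|_{\L^p}^p\,\d s$, the It\^o correction is dominated by $C\int_0^t(1+\|u_{\e,\phi}\|_{\L^p}^p)\,\d s$, and the stochastic integral is handled by the BDG inequality and Young's inequality, absorbing $\tfrac12\E\sup_{r\le t}\|u_{\e,\phi}(r)\|_{\L^p}^p$ into the LHS. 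The only new contribution from the controlled equation \eqref{CE1} is the deterministic drift
\begin{align*}
I_6(t):=p\int_0^t\int_0^1 |u_{\e,\phi}(s,y)|^{p-2}\,u_{\e,\phi}(s,y)\,g(s,y,u_{\e,\phi}(s,y))\,\phi(s,y)\,\d y\,\d s.
\end{align*}

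\textbf{Controlling $I_6$ --- the main obstacle.} Since $\phi$ belongs only to $\L^2([0,T]\times[0,1])$ while $g$ has merely linear growth (Hypothesis \ref{hyp1}), Cauchy--Schwarz in $y$ gives
$\int_0^1|u_{\e,\phi}|^{p-1}(1+|u_{\e,\phi}|)|\phi|\,\d y\le\|(1+|u_{\e,\phi}|)|u_{\e,\phi}|^{p-1}\|_{\L^2}\|\phi(s)\|_{\L^2}$,
but the first factor involves $\|u_{\e,\phi}\|_{\L^{2p}}^p$, which is not controlled by $\|u_{\e,\phi}\|_{\L^p}$ alone. The key is to exploit the one-dimensional Sobolev embedding applied to $v:=|u_{\e,\phi}|^{p/2}$, which lies in $\H_0^1([0,1])$: since $\|v\|_{\L^\infty}^2\le2\|v\|_{\L^2}\|v'\|_{\L^2}$ and $\|v'\|_{\L^2}^2=\tfrac{p^2}{4}\||u_{\e,\phi}|^{(p-2)/2}\partial_x u_{\e,\phi}\|_{\L^2}^2$, one obtains
\begin{align*}
\|u_{\e,\phi}\|_{\L^\infty}^p\le\|u_{\e,\phi}\|_{\L^p}^p+\tfrac{p^2}{4}\||u_{\e,\phi}|^{(p-2)/2}\partial_x u_{\e,\phi}\|_{\L^2}^2.
\end{align*}
Combined with the interpolation $\|u_{\e,\phi}\|_{\L^q}^q\le\|u_{\e,\phi}\|_{\L^p}^p\|u_{\e,\phi}\|_{\L^\infty}^{q-p}$ applied for $q\in\{2p-2,2p\}$, this rewrites $\|(1+|u_{\e,\phi}|)|u_{\e,\phi}|^{p-1}\|_{\L^2}$ entirely in terms of $\|u_{\e,\phi}\|_{\L^p}$ and the dissipation term already sitting on the LHS.

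\textbf{Closure via Gronwall.} A weighted Young inequality then produces a bound of the shape
\begin{align*}
|I_6(t)|\le\tfrac{\nu p(p-1)}{2}\int_0^t\||u_{\e,\phi}|^{(p-2)/2}\partial_x u_{\e,\phi}\|_{\L^2}^2\,\d s+C(p,K)\int_0^t\bigl(1+\|\phi(s)\|_{\L^2}^2\bigr)\bigl(1+\|u_{\e,\phi}(s)\|_{\L^p}^p\bigr)\,\d s,
\end{align*}
so the first summand is absorbed into the dissipation on the LHS. Taking $\sup_{r\le t}$ and expectation, and combining with the BDG/Young treatment of the martingale exactly as in Lemma \ref{lemUE}, yields
\begin{align*}
\Psi(t)\le C\bigl(1+\|u^0\|_{\L^p}^p\bigr)+C(p,K,T)\int_0^t\bigl(1+\|\phi(s)\|_{\L^2}^2\bigr)\Psi(s)\,\d s,
\end{align*}
where $\Psi(t)$ stands for the left-hand side of \eqref{48} truncated to $[0,t]$. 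Because $\phi\in\PP_2^M$ guarantees $\int_0^T\|\phi(s)\|_{\L^2}^2\,\d s\le M$ almost surely, the weight $1+\|\phi(\cdot)\|_{\L^2}^2$ is deterministically in $\L^1([0,T])$ with norm at most $T+M$ uniformly in $\phi\in\PP_2^M$, and the deterministic Gronwall inequality delivers \eqref{48} with a constant $C$ depending only on $p$, $M$, and $T$, independent of $\e\in(0,1]$ and of the particular control $\phi$.
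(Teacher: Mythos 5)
Your proposal is correct and follows exactly the route the paper intends: the paper's own "proof" of Proposition \ref{PrioriEstimate} is a one-line pointer back to Lemma \ref{lemUE}, so the only substantive content is the treatment of the extra drift $I_6$ coming from the control, and you have correctly identified that this is where the work lies. Your handling of $I_6$ is sound: since $p>2\delta$, the space $\L^{2p}$ is \emph{not} intermediate between $\L^p$ and $\L^{p+2\delta}$, so the reaction term cannot absorb $\|u_{\e,\phi}\|_{\L^{2p}}^p$ and one is forced to go through the dissipation via the one-dimensional Agmon inequality applied to $|u_{\e,\phi}|^{p/2}\in\H_0^1([0,1])$, exactly as you do. (For comparison, in Section \ref{ULDPBDD} the paper faces the analogous term with \emph{bounded} $g$ and can instead interpolate between $\L^q$ and $\L^{q+2\delta}$, absorbing into the reaction term; that shortcut is unavailable here precisely because of the linear growth, which is why your extra step is needed.)

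One point deserves more care in the write-up: the Gronwall closure. As displayed, the inequality $\Psi(t)\le C(1+\|u^0\|_{\L^p}^p)+C\int_0^t(1+\|\phi(s)\|_{\L^2}^2)\Psi(s)\,\d s$ with $\Psi(t)=\E[\sup_{r\le t}\cdots]$ does not typecheck, because $\phi$ is a \emph{random} control and $\|\phi(s)\|_{\L^2}^2$ cannot be pulled outside the expectation. The fix is standard but should be stated: first run Gronwall pathwise on the inequality $\sup_{r\le t}\|u_{\e,\phi}(r)\|_{\L^p}^p\le \|u^0\|_{\L^p}^p+\int_0^t a(s)(1+\sup_{r\le s}\|u_{\e,\phi}(r)\|_{\L^p}^p)\,\d s+\sup_{r\le t}|M_r|$, using that $\int_0^T a(s)\,\d s\le C(T+M)$ holds $\P$-a.s. uniformly over $\phi\in\PP_2^M$; only then take expectations, apply BDG to $\E[\sup_{r\le t}|M_r|]$ with a Young constant small enough to be absorbed after multiplication by the (deterministic) factor $e^{C(T+M)}$, and conclude with a second, now genuinely deterministic, Gronwall step. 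With that reordering your argument closes and yields the constant depending only on $p$, $M$, $T$ as claimed.
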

	\begin{proof}
		Proof can be obtained in a  similar way as we did for \eqref{UE1} in Lemma \ref{lemUE}.
	\end{proof}
	\begin{corollary}\label{UCPC}
		The random field $\sup\limits_{t\in[0,T]}\|u_{\epsilon, \phi_{\epsilon}}(t)\|_{\L^p}^{p}$, for $p>\max\{6,2\delta+1\}$ is bounded in probability, that is, for any given $T>0$, $M>0$, and $N>0$, 
		\begin{align}\label{3.12}
			\lim_{R\to\infty}\sup_{\|u^0\|_{\L^p}\leq N}\sup_{\phi\in\PP_2^M}\sup_{\e\in(0,1]}\P\bigg[\sup_{t\in[0,T]}\|u_{\epsilon, \phi_{\epsilon}}(t)\|_{\L^p}^{p}>R\bigg]=0.
		\end{align}
	\end{corollary}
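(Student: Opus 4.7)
The strategy is a straightforward Markov-inequality argument combined with the uniform moment bound furnished by Proposition \ref{PrioriEstimate}. The key observation is that the constant $C$ in \eqref{48} depends only on $p$, $M$, and $T$, and the dependence on the initial datum enters only through $\|u^0\|_{\L^p}^p$. Consequently, once we restrict to the class $\{u^0 : \|u^0\|_{\L^p}\leq N\}$ and $\phi\in\PP_2^M$, the right-hand side of \eqref{48} is bounded by $C(p,M,T)(1+N^p)$ uniformly in $\e\in(0,1]$.

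The plan is as follows. First, for any fixed $R>0$, $\e\in(0,1]$, $\phi\in\PP_2^M$, and $u^0$ with $\|u^0\|_{\L^p}\leq N$, apply Markov's inequality to obtain
\begin{align*}
\P\bigg[\sup_{t\in[0,T]}\|u_{\e,\phi_\e}(t)\|_{\L^p}^p > R\bigg] \leq \frac{1}{R}\,\E\bigg[\sup_{t\in[0,T]}\|u_{\e,\phi_\e}(t)\|_{\L^p}^p\bigg].
\end{align*}
Next, invoke Proposition \ref{PrioriEstimate} (applied with the admissible control $\phi_\e\in\PP_2^M$) to dominate the expectation by $C(p,M,T)(1+\|u^0\|_{\L^p}^p)\leq C(p,M,T)(1+N^p)$. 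Since the resulting bound is independent of $\e$, $\phi_\e$, and of the specific $u^0$ in the prescribed ball, taking the triple supremum preserves this bound, yielding
\begin{align*}
\sup_{\|u^0\|_{\L^p}\leq N}\sup_{\phi\in\PP_2^M}\sup_{\e\in(0,1]}\P\bigg[\sup_{t\in[0,T]}\|u_{\e,\phi_\e}(t)\|_{\L^p}^p > R\bigg]\leq \frac{C(p,M,T)(1+N^p)}{R}.
\end{align*}
Finally, letting $R\to\infty$ produces the desired conclusion \eqref{3.12}.

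There is no genuine obstacle here; the entire content of the corollary is already contained in Proposition \ref{PrioriEstimate}. The only point requiring care is to verify that the constant $C$ in \eqref{48} is indeed independent of $\e\in(0,1]$ and of the particular $\phi$ within $\PP_2^M$ (depending only on $p$, $M$, $T$), which is exactly the qualitative content built into the statement of Proposition \ref{PrioriEstimate}. Once this is noted, Markov's inequality transfers the uniform $\L^1$ bound to a uniform-in-probability bound.
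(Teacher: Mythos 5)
Your proposal is correct and follows exactly the route the paper takes: the paper's own proof is simply an application of Markov's inequality together with the global estimate \eqref{48} from Proposition \ref{PrioriEstimate}. Your write-up merely spells out the details (the uniformity of the constant in $\e$, $\phi$, and $u^0$ within the prescribed ball) that the paper leaves implicit.
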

	\begin{proof}
		An application of Markov's inequality and  the use of global estimate \eqref{48} leads to the proof of \eqref{3.12}.
	\end{proof}

	\begin{theorem}[Uniform convergence in probability]	\label{UCP}For any $T>0,\ \eta>0,\ N>0$,  $M>0$ and $p>\max\{6,2\delta+1\}$ 
		\begin{align}\label{4.1}
			\lim_{\e\to0}\sup_{\|u^0\|_{\L^p}\leq N}\sup_{\phi\in\PP_2^M}\P\bigg(\|u_{\e,\phi}-u_{0,\phi}\|_{\C([0,T];\L^p([0,1]))}>\eta\bigg)=0.
		\end{align}
	\end{theorem}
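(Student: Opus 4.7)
The plan is to write the difference $u_{\e,\phi} - u_{0,\phi}$ using the two mild formulations \eqref{CE1} and \eqref{CE2}, bound each piece on a high-probability event using a localizing stopping time, apply a (possibly singular) Gronwall inequality, and then transfer $\L^p$-moment smallness into smallness in probability via Chebyshev, all tracked uniformly over $\|u^0\|_{\L^p}\le N$ and $\phi\in\PP_2^M$.

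First, for $R>0$ introduce the stopping time
\[
\tau_R^{\e,\phi}:=\inf\{t\ge 0:\|u_{\e,\phi}(t)\|_{\L^p}\ge R\}\wedge T,
\qquad \tau_R^{0,\phi}:=\inf\{t\ge 0:\|u_{0,\phi}(t)\|_{\L^p}\ge R\}\wedge T,
\]
and set $\tau_R:=\tau_R^{\e,\phi}\wedge\tau_R^{0,\phi}$. By Corollary \ref{UCPC} together with the corresponding a priori bound for $u_{0,\phi}$ (the $\e=0$ case of Proposition \ref{PrioriEstimate}), and Markov's inequality, one has
\[
\lim_{R\to\infty}\sup_{\|u^0\|_{\L^p}\le N}\sup_{\phi\in\PP_2^M}\sup_{\e\in(0,1]}\P(\tau_R<T)=0.
\]
Hence, splitting
\[
\P\bigl(\|u_{\e,\phi}-u_{0,\phi}\|_{\C([0,T];\L^p)}>\eta\bigr)\le\P\bigl(\|u_{\e,\phi}-u_{0,\phi}\|_{\C([0,\tau_R];\L^p)}>\eta\bigr)+\P(\tau_R<T),
\]
it suffices, for each fixed $R$, to show that the first term vanishes as $\e\to 0$ uniformly in $u^0$ and $\phi$.

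Subtracting \eqref{CE2} from \eqref{CE1} and taking $\L^p$-norms, the difference $D_\e(t):=u_{\e,\phi}(t)-u_{0,\phi}(t)$ satisfies
\[
\|D_\e(t)\|_{\L^p}\le\tfrac{\alpha}{\delta+1}\|\Delta_2(t)\|_{\L^p}+\beta\|\Delta_3(t)\|_{\L^p}+\sqrt{\e}\|\Delta_4(t)\|_{\L^p}+\|\Delta_5(t)\|_{\L^p},
\]
where $\Delta_2,\Delta_3,\Delta_5$ are the convective, reactive, and control differences, and $\Delta_4$ is the stochastic convolution from \eqref{CE5}. For $t\le\tau_R$, the nonlinear bounds \eqref{313} and \eqref{316} give
\[
\|\Delta_2(t)\|_{\L^p}+\|\Delta_3(t)\|_{\L^p}\le C(R,\delta,\gamma)\int_0^t(t-s)^{-\frac{1}{2}-\frac{\delta}{2p}}\|D_\e(s\wedge\tau_R)\|_{\L^p}\,\d s,
\]
with $-\tfrac12-\tfrac{\delta}{2p}>-1$ for $p\ge 2\delta+1$. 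For the control term, mimicking \eqref{CE17} and using $\|\phi\|_{\L^2([0,T]\times[0,1])}\le M$ together with the Lipschitz bound on $g$,
\[
\|\Delta_5(t)\|_{\L^p}\le C(p,L,M,T)\Bigl(\int_0^t(t-s)^{-1+\frac{1}{p}}\|D_\e(s\wedge\tau_R)\|_{\L^p}^2\,\d s\Bigr)^{1/2},
\]
which after squaring gives another weakly singular Gronwall kernel; note that $\phi$ enters only through its $\L^2$-norm, giving uniformity in $\phi\in\PP_2^M$. Finally, estimating exactly as in \eqref{E9} with the a priori bound of Proposition \ref{PrioriEstimate} (taken with $\phi=0$ there is no issue, and the general case was already used in \eqref{T1}), one obtains
\[
\E\Bigl[\sup_{t\in[0,T]}\|\Delta_4(t)\|_{\L^p}^p\Bigr]\le C(p,K,T)\bigl(1+\|u^0\|_{\L^p}^p\bigr),
\]
so that $\E[\sqrt{\e}\sup_t\|\Delta_4\|_{\L^p}^p]\le C(N,p,T)\,\e^{p/2}\to 0$ uniformly in $\|u^0\|_{\L^p}\le N$.

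Raising the displayed inequality for $\|D_\e\|_{\L^p}$ to the $p$th power, taking the supremum up to $\tau_R$ and expectation, the weakly singular Gronwall lemma (e.g.\ Henry's inequality) yields
\[
\E\Bigl[\sup_{t\in[0,T]}\|D_\e(t\wedge\tau_R)\|_{\L^p}^p\Bigr]\le C(R,M,N,p,T)\,\e^{p/2},
\]
uniformly in $\|u^0\|_{\L^p}\le N$ and $\phi\in\PP_2^M$. By Markov's inequality,
\[
\sup_{\|u^0\|_{\L^p}\le N}\sup_{\phi\in\PP_2^M}\P\bigl(\|u_{\e,\phi}-u_{0,\phi}\|_{\C([0,\tau_R];\L^p)}>\eta\bigr)\le\frac{C(R,M,N,p,T)\,\e^{p/2}}{\eta^p}\xrightarrow{\e\to 0}0,
\]
which combined with the stopping-time estimate and the arbitrariness of $R$ gives \eqref{4.1}.

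The main obstacle is tracking the uniformity in the control $\phi$: the control term $\Delta_5$ has a weakly singular kernel and $\phi$ appears pathwise rather than through a moment, so one has to square-and-Gronwall using only $\|\phi\|_{\L^2([0,T]\times[0,1])}\le M$; the second subtle point is choosing $R$ large enough so that the localization error is negligible \emph{before} sending $\e\to 0$, which is precisely what Corollary \ref{UCPC} (uniform in $u^0,\phi,\e$) enables.
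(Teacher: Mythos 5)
Your overall architecture matches the paper's: decompose $u_{\e,\phi}-u_{0,\phi}$ into the convective, reactive, control, and stochastic-convolution pieces, localize on a set where both solutions are bounded in $\L^p$ (you use stopping times, the paper uses the event $\Upsilon_P^\e$ from \eqref{4.4}; these are interchangeable here), run a weakly singular Gronwall estimate with the control entering only through $\|\phi\|_{\L^2([0,T]\times[0,1])}\le M$, and finish with Markov's inequality plus the a priori bound of Proposition \ref{PrioriEstimate}. Up to that point your proposal is sound and uniform in $u^0$ and $\phi$ exactly as required.

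The gap is in your treatment of the stochastic convolution $\Delta_4$. You claim
$\E\bigl[\sup_{t\in[0,T]}\|\Delta_4(t)\|_{\L^p}^p\bigr]\le C(p,K,T)(1+\|u^0\|_{\L^p}^p)$
"estimating exactly as in \eqref{E9}," but \eqref{E9} is a \emph{fixed-time} estimate: for each $t$ one applies the BDG inequality \eqref{18} to the martingale $r\mapsto\int_0^r\int_0^1 G(t-s,x,y)g\,\W^\Q(\d s,\d y)$, $r\le t$, and evaluates at $r=t$. Because the kernel $G(t-s,\cdot,\cdot)$ depends on $t$, the process $t\mapsto\Delta_4(t)$ is not a martingale, so no maximal inequality over $t$ comes for free from \eqref{E9}; upgrading to a supremum-in-time moment bound requires either the factorization method or a Kolmogorov-continuity argument on increments, neither of which you supply. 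This is precisely the point where the paper takes a different route: it invokes Lemma \ref{lemE2} (whose increment estimates are the sole reason the theorem assumes $p>\max\{6,2\delta+1\}$ rather than $p\ge 2\delta+1$) to get tightness of $\{J_\e^4\}$ in $\C([0,T]\times[0,1])$, embeds into $\C([0,T];\L^p([0,1]))$, and then uses Prokhorov's theorem and the fact that $\sqrt{\e_m}\downarrow 0$ to conclude $\sqrt{\e}\sup_{t}\|J_\e^4(t)\|_{\L^p}\chi_{\Upsilon_P^\e}\to 0$ in probability (convergence in distribution to a constant), with no moment bound on the supremum ever established. Note that your own argument never uses $p>6$, which is a signal that the hard step has been skipped; if you want to keep your cleaner moment-based conclusion $\E[\sup_t\|D_\e(t\wedge\tau_R)\|_{\L^p}^p]\le C\e^{p/2}$, you must first prove the maximal inequality for $\Delta_4$ (e.g.\ by factorization), which is a genuinely separate piece of work.
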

	\begin{proof}
		For any $\phi\in\PP_2^M, \ \|u^0\|_{\L^p}\leq N$, for $p>\max \{6,2\delta+1\}$ and $\e\in(0,1]$, we have 
		\begin{align}\label{4.2}\nonumber
			u_{\e,\phi}(t,x)-&u_{0,\phi}(t,x)\\&\nonumber=\frac{\alpha}{\delta+1}\int_0^t\int_0^1\frac{\partial G}{\partial y}(t-s,x,y)\big(\p(u_{\e,\phi}(s,y))-\p(u_{0,\phi}(s,y))\big)\d y\d s\\&\nonumber\quad +\beta\int_0^t\int_0^1G(t-s,x,y)\big(\c(u_{\e,\phi}(s,y))-\c(u_{0,\phi}(s,y))\big)\d y\d s\\&\nonumber\nonumber\quad+
			\int_0^t\int_0^1G(t-s,x,y)\big(g(s,y,u_{\e,\phi}(s,y))-g(s,y,u_{0,\phi}(s,y))\big)\phi(s,y)\d y\d s\\&\nonumber\quad+ 
			\sqrt{\e} 
			\int_0^t\int_0^1G(t-s,x,y)g(s,y,u_{\e,\phi}(s,y))\W^\Q(\d s,\d y)\\&=:\frac{\alpha}{\delta+1}J_\e^1(t,x)+\beta J_\e^2(t,x)+J_\e^3(t,x)+\sqrt{\e}J_\e^4(t,x).
		\end{align}Now, we consider $J_\e^1(\cdot,\cdot)$ and estimate it using similar arguments as in \eqref{313} to find
		\begin{align}\label{4.3}
			\|J_\e^1(t)\|_{\L^p} &\leq C\int_0^t(t-s)^{-\frac{1}{2}-\frac{\delta}{2p}}\big(\|u_{\e,\phi}(s)\|_{\L^p}+\|u_{0,\phi}(s)\|_{\L^p}\big)^\delta\|u_{\e,\phi}(s)-u_{\e,\phi}(s)\|_{\L^p}\d s,
		\end{align}
	for $p\geq \delta+1$.  For any $P>0, \ \e\in(0,1],\ \phi\in\PP_2^M,\ \|u^0\|_{\L^p}\leq N$ and $T>0$, we define 
		\begin{align}\label{4.4}
			\Upsilon_{P}^\e:=\bigg\{\omega\in\Omega:\ \sup_{t\in[0,T]}\|u_{\e,\phi}(t)\|_{\L^p}\leq P, \ \sup_{t\in[0,T]}\|u_{0,\phi}(t)\|_{\L^p}\leq P\bigg\}.
		\end{align}Then, applying H\"older's inequality in \eqref{4.3} and using \eqref{4.4}, we arrive at
		\begin{align}\label{4.5}
			\|J_\e^1(t)\|_{\L^p}^p \chi_{\Upsilon_{P}^\e}\leq C(p,\delta,T,P) \chi_{\Upsilon_{P}^\e}\int_0^t(t-s)^{-\frac{1}{2}-\frac{\delta}{2p}}\|u_{\e,\phi}(s)-u_{\e,\phi}(s)\|_{\L^p}^p\d s,
		\end{align}where $\chi_A$ is the characteristic function of the set $A$.
		Let us consider the term $J_\e^2(\cdot,\cdot)$, and estimate it using similar arguments as in \eqref{316} to obtain
		\begin{align}\label{4.6}\nonumber
			&\|J_\e^2(t)\|_{\L^p} \\&\nonumber\leq C\int_0^t\bigg\{(t-s)^{-\frac{\delta}{2p}}(1+\delta)(1+\gamma)\big(\|u_{\e,\phi}(s)\|_{\L^p}+\|u_{0,\phi}(s)\|_{\L^p}\big)^\delta+\gamma\\&\qquad+(t-s)^{-\frac{\delta}{p}}(1+2\delta)\big(\|u_{\e,\phi}(s)\|_{\L^p}+\|u_{0,\phi}(s)\|_{\L^p}\big)^{2\delta}\bigg\}\|u_{\e,\phi}(s)-u_{0,\phi}(s)\|_{\L^p}\d s,
		\end{align} for $p\geq 2\delta+1$. By \eqref{4.4} and using H\"older's inequality, we have 
		\begin{align}\label{4.7}\nonumber
			\|J_\e^2(t)\|_{\L^p}^p \chi_{\Upsilon_{P}^\e}&\leq C(p,\delta,\gamma,T,P) \chi_{\Upsilon_{P}^\e}\int_0^t\bigg\{(t-s)^{-\frac{\delta}{2p}}(1+\delta)(1+\gamma)+\gamma+(t-s)^{-\frac{\delta}{p}}(1+2\delta)\bigg\}\\&\qquad\times\|u_{\e,\phi}(s)-u_{0,\phi}(s)\|_{\L^p}^p\d s.
		\end{align}
	Using Lemma \ref{lemma3.1}, the fact that $\phi\in\PP_2^M$ and H\"older's inequality, we find 
		\begin{align}\label{4.8}\nonumber
			\|J_\e^3(t)\|_{{\L^p}}^p\chi_{\Upsilon_{P}^\e}&\leq C\chi_{\Upsilon_{P}^\e}\bigg\{\int_0^t(t-s)^{-\frac{1}{2}+\frac{1}{2p}}\|u_{\e,\phi}(s)-u_{0,\phi}(s)\|_{\L^p}\|\phi(s)\|_{\L^2}\d s\bigg\}^p \\&\nonumber\leq C\chi_{\Upsilon_{P}^\e}\bigg(\int_0^t\|\phi(s)\|_{\L^2}^2\d s\bigg)^{\frac{p}{2}}\bigg(\int_0^t(t-s)^{-1+\frac{1}{p}}\|u_{\e,\phi}(s)-u_{0,\phi}(s)\|_{\L^p}^2\d s\bigg)^{\frac{p}{2}}\\&\nonumber\leq CM^{\frac{p}{2}}\chi_{\Upsilon_{P}^\e}\bigg(\int_0^t(t-s)^{\big(-1+\frac{1}{p}\big)\big(\frac{2}{p}+\frac{p-2}{p}\big)}\|u_{\e,\phi}(s)-u_{0,\phi}(s)\|_{\L^p}^2\d s\bigg)^{\frac{p}{2}}\\&\leq C(p)M^{\frac{p}{2}}T^{\frac{p-2}{2p}}\chi_{\Upsilon_{P}^\e}\bigg(\int_0^t(t-s)^{-1+\frac{1}{p}}\|u_{\e,\phi}(s)-u_{0,\phi}(s)\|_{\L^p}^p\d s\bigg).
		\end{align}Combining \eqref{4.2}-\eqref{4.8} and applying Gronwall's inequality, we find
		\begin{align}\label{4.9}\nonumber
			&\sup_{t\in[0,T]}\|u_{\e,\phi}(t)-u_{0,\phi}(t)\|_{\L^p}^p\chi_{\Upsilon_{P}^\e}\\& \nonumber\leq C\e^{\frac{p}{2}}\sup_{t\in[0,T]}\|J_\e^4(t)\|_{\L^p}^p\chi_{\Upsilon_{P}^\e}\\&\nonumber\quad\times\exp\bigg\{C\int_0^T\left[|t-s|^{-\frac{1}{2}-\frac{\delta}{2p}}+|t-s|^{-\frac{\delta}{2p}}+\gamma+|t-s|^{-\frac{\delta}{p}}+|t-s|^{-1+\frac{1}{p}}\right]\d s\bigg\}\\& \leq C(\alpha,\beta,\delta,\gamma,p,T,M,P)\e^{\frac{p}{2}}\sup_{t\in[0,T]}\|J_\e^4(t)\|_{\L^p}^p\chi_{\Upsilon_{P}^\e}.
		\end{align}Let $\vp>0$. Using Proposition \ref{PrioriEstimate}, there exists $P>0$ large enough such that
		\begin{align}\label{4.10}\nonumber
			&	\sup_{\|u^0\|_{\L^p}\leq N}\sup_{\phi\in\PP_2^M}\sup_{\e\in(0,1]}\big[1-\P(\Upsilon_{P}^\e)\big]\\&\nonumber\leq 
			\sup_{\|u^0\|_{\L^p}\leq N}\sup_{\phi\in\PP_2^M}\sup_{\e\in(0,1]}\bigg\{\P\bigg(\sup_{t\in[0,T]}\|u_{\e,\phi}(t)\|_{\L^p}> P\bigg)+\P\bigg( \sup_{t\in[0,T]}\|u_{0,\phi}(t)\|_{\L^p}>P \bigg)\bigg\}\leq \vp. \\&
		\end{align}
	Using \eqref{4.9} and \eqref{4.10}, 
	for any $\e\in(0,1], u^0\in\L^p([0,1])$ and $\phi\in\PP_2^M$, we obtain
		\begin{align}\label{4.11}\nonumber
			&	\sup_{\|u^0\|_{\L^p}\leq N}\sup_{\phi\in\PP_2^M}\P\bigg(\sup_{t\in[0,T]}\|u_{\e,\phi}(t)-u_{0,\phi}(t)\|_{\L^p}>\eta\bigg) \\&\nonumber\leq 	\sup_{\|u^0\|_{\L^p}\leq N}\sup_{\phi\in\PP_2^M}\big[1-\P(\Upsilon_{P}^\e)\big]+\sup_{\|u^0\|_{\L^p}\leq N}\sup_{\phi\in\PP_2^M}\P\bigg(\sup_{t\in[0,T]}\|u_{\e,\phi}(t)-u_{0,\phi}(t)\|_{\L^p}\chi_{\Upsilon_{P}^\e}>\eta\bigg)\\
			& \leq \vp +\sup_{\|u^0\|_{\L^p}\leq N}\sup_{\phi\in\PP_2^M}\P\bigg(\sqrt{\e}\sup_{t\in[0,T]}\|J_\e^4(t)\|_{\L^p}\chi_{\Upsilon_{P}^\e}>\frac{\eta}{C(\alpha,\beta,\delta,\gamma,p,T,M,P)}\bigg).
		\end{align}
		Due to the linear growth of $g$, we have for any $t\in [0,T]$
		\begin{align}\label{417}
			\sup_{\e\in(0,1]}\E \bigg[\sup_{t\in[0,T]}\|g(t,\cdot,u_{\e,\phi}(t, \cdot))\|_{\L^p}^{p}\bigg]\leq C\bigg(1+\sup_{\e\in(0,1]}\E\bigg[\sup_{t\in[0,T]}\|u_{\e,\phi}(t)\|_{\L^p}^{p}\bigg]\bigg)<\infty.
		\end{align}
		Therefore,  Lemma \ref{lemE2} ensures that  for any  $\|u^0\|_{\L^p}\leq N$ and $\phi\in\PP_2^M$ the family
		$$\{J_\e^4(t):\e\in(0,1] \}$$
		is tight in $\C([0,T]\times[0,1])$, for $p>6$. Since we have the embedding $\C([0,T]\times[0,1])\hookrightarrow \C([0,T];\L^p([0,1]))$, the family $\{J_\e^4(\cdot)\}$ is also tight in the space $\C([0,T];\L^p([0,1]))$,  for $p>6$. Now, applying Prokhorov's Theorem, we get that for any sequences $\epsilon_m\rightarrow 0,\phi_m\in\PP_2^M$ and  $\|u^0_m\|_{\L^p}\leq N$, there exists a subsequence  (relabeled $(\e_m,\phi_m,u^0_m )$) such that $J_{\e_m}^4(t)$ converges in distribution. Now, if we multiply by the sequence $\sqrt{\e_m}\downarrow0$, then it is obvious that
		$$\sqrt{\e_m} \sup_{0\leq t\leq T}\|J_{\e_m}^4(t)\|_{\L^p}\chi_{\Upsilon_{P}^{\e_m}} \Rightarrow 0\ \ \text{  as   }\ \ m\rightarrow \infty.$$
		Therefore, the original sequence
		$\sqrt{\e_m} \sup\limits_{0\leq t\leq T}\|J_{\e_m}^4(t)\|_{\L^p}\chi_{\Upsilon_{P}^{\e_m}} \Rightarrow0$ as $m\to\infty$. Since the limit is zero which is a constant,  therefore it converges in probability also, that is,
		$$\lim_{\e\to0}\P\bigg(\sqrt{\e}\sup_{t\in[0,T]}\|J_\e^4(t)\|_{\L^p}\chi_{\Upsilon_{P}^\e}>\frac{\eta}{C(\alpha,\beta,\delta,\gamma,p,T,M,P)}\bigg)=0.$$
		Now, due to arbitrary choice of $\e_m\downarrow 0,\phi_m\in \PP_2^M,$ and $\|u^0_m\|_{\L^p}\leq N $, we conclude that 
		\begin{align}\label{4.12}
			\lim_{\e\to0} \sup_{\|u^0\|_{\L^p}\leq N} \sup_{\phi\in\PP_2^M}\P \bigg(\sup_{t\in[0,T]}\|u_{\e,\phi}(t)-u_{0,\phi}(t)\|_{\L^p}>\eta\bigg)\leq \varpi.
		\end{align}
		Since the choice of $\vp$ is arbitrary, therefore \eqref{4.1} holds.
	\end{proof}
	
	\begin{proof}[Proof of Theorem \ref{LDPinLp}] Combining  Theorems \ref{ExistenceofControled}  and \ref{UCP}, we obtain the required result.
	\end{proof}
	
		Let us now move to the proof of Theorem \ref{LDPinC}, that is, the ULDP in the {$\C([0,T]\times[0,1])$} topology.
		\begin{lemma}\label{TightnessInC}
			For any $\|u^0\|_{\C([0,1])}\leq N,$ the family
			$$\left\{u_{\e,\phi}-u_{0,\phi}: \phi\in\PP_2^M, \e\in (0,1] \right\}$$
			is tight in $\C([0,T]\times[0,1])$.
		\end{lemma}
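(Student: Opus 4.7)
The plan is to establish tightness of the two families $\{u_{\e,\phi}:\phi\in\PP_2^M,\e\in(0,1]\}$ and $\{u_{0,\phi}:\phi\in\PP_2^M\}$ \emph{separately} in $\C([0,T]\times[0,1])$, uniformly over $\|u^0\|_{\C([0,1])}\leq N$; tightness of the difference then follows automatically since the family of tight probability measures is closed under continuous operations. Because $u^0\in\C([0,1])\subset\L^p([0,1])$ for every $p$, Remark \ref{rem3.6} and Theorem \ref{ExistenceofControled} guarantee that both fields admit $\P$-a.s. continuous trajectories on $[0,T]\times[0,1]$ and satisfy the moment bound of Proposition \ref{PrioriEstimate} with constants independent of $(\e,\phi)$.

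Next, decompose the mild equation \eqref{CE1} into the heat initial piece, the convection piece $I_{2,\e}$, the reaction piece $I_{3,\e}$, the control drift $I_{5,\e}$, and the stochastic convolution $\sqrt{\e}\,I_{4,\e}$ (and similarly for \eqref{CE2} without the last term). For each piece I would derive Kolmogorov-type increment estimates of the form
\[
\E\bigl|F^{\e,\phi}(t+k,x+h)-F^{\e,\phi}(t,x)\bigr|^{q}\leq C(N,M,T)\bigl(|h|^{a}+|k|^{b}\bigr),
\]
with $q\cdot\min(a,b)>2$ and constants independent of $(\e,\phi,u^0)$ within the prescribed ball. The heat piece is handled by classical heat-kernel regularity applied to $u^0\in\C([0,1])$. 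The convection and reaction pieces are treated through the Green's function estimates recorded in Appendix \ref{SUR} together with the uniform $\L^p$-bound of Proposition \ref{PrioriEstimate} (the polynomial nonlinearities $\p$ and $\c$ being integrable thanks to $p\geq 2\delta+1$). The control drift is controlled by Cauchy--Schwarz in $(s,y)$, so that only $\|\phi\|_{\L^{2}([0,T]\times[0,1])}\leq\sqrt{M}$ enters, the remaining Green-function factor being estimated by the same heat-kernel substitutions used above.

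The stochastic convolution is the delicate term, and here I would copy, line by line, the Kolmogorov-type analysis performed in the proof of Lemma \ref{lemE2}: write $G=\tilde p+H$ with $H$ smooth and $\tilde p$ the free heat kernel, apply the BDG inequality, and carry out the self-similar substitutions $y-x=hz,\ t-s=h^{2}\ell$ and $y-x=\sqrt{k}z,\ t-s=k\ell$. The only difference with \eqref{T1}--\eqref{TE6} is that $g(\cdot,\cdot,u_{\e,m})$ is replaced by $g(\cdot,\cdot,u_{\e,\phi})$, whose $\L^p$-moments are uniformly bounded in $(\e,\phi,u^0)$ by Proposition \ref{PrioriEstimate}. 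The exponents of $h$ and $k$ obtained in \eqref{TE3}, \eqref{TE5} and \eqref{TE6} are thereby preserved; for $p>10$ Kolmogorov's continuity criterion provides Hölder continuous modifications, and Markov's inequality for boundedness at a base point combined with Theorem 7.3 of \cite{PB} yields tightness of $\sqrt{\e}\,I_{4,\e}$ in $\C([0,T]\times[0,1])$.

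The main obstacle will be ensuring genuine uniformity in $(\e,\phi,u^0)$ in every one of the above estimates. In particular the control drift's increment must be bounded using only $\|\phi\|_{\L^{2}([0,T]\times[0,1])}\leq\sqrt{M}$ (since no pointwise information on $\phi$ is available), and the stochastic convolution increments must be carried out with the \emph{linear-growth} coefficient $g$ rather than a bounded one, absorbing the growth via the uniform $\L^p$-moment bounds on the controlled process. Once these uniform moduli-of-continuity estimates are in hand, Billingsley's criterion closes the argument for both $\{u_{\e,\phi}\}$ and $\{u_{0,\phi}\}$, and hence for the difference.
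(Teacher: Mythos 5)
There is a genuine gap at the very first step. You propose to prove tightness of $\{u_{\e,\phi}\}$ and $\{u_{0,\phi}\}$ separately and then pass to the difference. For a single fixed $u^0$ this is formally admissible, but the lemma is used in Theorem \ref{ContinuousCase} along sequences $u^0_m$ ranging over the whole ball $\|u^0\|_{\C([0,1])}\leq N$, so the tightness must be uniform over that ball — and the ball is \emph{not} compact in $\C([0,1])$. Your decomposition of each individual process then contains the initial-data term $\int_0^1G(t,\cdot,y)u^0(y)\,\d y$, and the family $\bigl\{(t,x)\mapsto\int_0^1G(t,x,y)u^0(y)\,\d y:\|u^0\|_{\C([0,1])}\leq N\bigr\}$ is not precompact in $\C([0,T]\times[0,1])$: at $t=0$ it reduces to the full non-compact ball itself, so no equicontinuity or pointwise precompactness is available there, and Arzel\`a--Ascoli fails. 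No Kolmogorov increment estimate can rescue this, because the obstruction sits at $t=0$ and is intrinsic to the non-compactness of the set of initial data — which is exactly the difficulty this whole section of the paper is designed to circumvent. The paper avoids it by working directly with the difference $u_{\e,\phi}-u_{0,\phi}$ (display \eqref{4.2}), in which the initial-data heat term cancels identically; this cancellation is the essential idea your proposal is missing.

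The remainder of your plan is sound and close in spirit to the paper's argument, with two cosmetic differences. For the convection and reaction convolutions the paper does not rederive Kolmogorov increment estimates but simply feeds the uniform-in-probability $\L^q$ bounds (Corollary \ref{UCPC}, via \eqref{J1}--\eqref{J2}) into the packaged tightness result Lemma \ref{lemma3.6}; your hands-on route through the Green's function estimates of Appendix \ref{SUR} would reproduce the same conclusion at greater length. For the control-drift term the paper invokes the H\"older estimates of Lemma 3.1 (iv) and Proposition 3.5 of \cite{IGCR} applied to $(g(u_{\e,\phi})-g(u_{0,\phi}))\phi$, bounding the relevant $\L^2$ norm by $\sup_s\|u_{\e,\phi}(s)-u_{0,\phi}(s)\|_{\L^\infty}\|\phi\|_{\L^2([0,T]\times[0,1])}$, which matches your Cauchy--Schwarz idea; and the stochastic convolution is handled, as you propose, by Lemma \ref{lemE2} through the uniform moment bound \eqref{417}. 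If you replace the separate-tightness strategy by the difference decomposition, the rest of your argument goes through.
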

		\begin{proof}
			Let us recall \eqref{4.2}, that is, 
		\begin{align*}	u_{\e,\phi}(t,x)-u_{0,\phi}(t,x)=\frac{\alpha}{\delta+1}J_\e^1(t,x)+\beta J_\e^2(t,x)+J_\e^3(t,x)+\sqrt{\e}J_\e^4(t,x).
			\end{align*}
			Using similar calculations as in Lemma \ref{lemma3.1} (1), Taylor's formula and the embedding \\$\L^q([0,1])\hookrightarrow \L^p([0,1])$, for $q\geq p$, we obtain 
			\begin{align}\label{J1}
				\|J_\e^1(t)\|_{\L^p}& \leq C\int_0^t(t-s)^{-\frac{1}{2}+\frac{1}{2p}}\big(\|u_{\e,\phi}(s)\|_{\L^q}+\|u_{0,\phi}(s)\|_{\L^q}\big)^{\delta}\big(\|u_{\e,\phi}(s)\|_{\L^q}+\|u_{0,\phi}(s)\|_{\L^q}\big)\d s,
			\end{align}for $q\in[\delta+1,p]$.
			Using similar arguments to the above inequality, we also have 
			\begin{align}\label{J2}\nonumber
				&	\|J_\e^2(t)\|_{\L^p}\\ &\nonumber\leq C\int_0^t(t-s)^{-\frac{1}{p}+\frac{1}{2p}}\bigg\{(1+\delta)(1+\gamma)\big(\|u_{\e,\phi}(s)\|_{\L^q}+\|u_{0,\phi}(s)\|_{\L^q}\big)^\delta\\&\quad+\gamma+(1+2\delta)\big(\|u_{\e,\phi}(s)\|_{\L^q}+\|u_{0,\phi}(s)\|_{\L^q}\big)^{2\delta}\bigg\}\big(\|u_{\e,\phi}(s)\|_{\L^q}+\|u_{0,\phi}(s)\|_{\L^q}\big)\d s,
			\end{align} for $q\in[2\delta+1,p]$.
			By Corollary \ref{UCPC}, we obtain that the terms appearing in the left hand side of the estimates (\ref{J1}) and (\ref{J2})
			are uniformly bounded in probability over bounded subsets of initial conditions in $\C([0,1])$  and  for all $\e\in (0,1]$. Therefore, by Lemma \ref{lemma3.6},  for all $\phi\in \PP_2^M$ and $\e\in (0,1]$, the family
			$$\{J_\e^i(\cdot): \|u^0\|_{\C([0,1])}<R\}$$
			are tight in $\C([0,T]\times[0,1])$ for $i=1,2.$
			Using \eqref{417},   Lemma \ref{lemE2} guarantees  that  for any  $\|u^0\|_{\C([0,1])}\leq N$ and $\phi\in\PP_2^M$ the family
			$$\{J_\e^4(t):\e\in(0,1] \}$$
			is tight in $\C([0,T]\times[0,1])$.
	
		The penultimate term $J_\e^3(\cdot)$ in the right hand side of \eqref{4.2} is not covered by Lemma \ref{lemma3.6}. Using Lemma 3.1 (iv) and Proposition 3.5, \cite{IGCR}, for $\rho=\infty,\ \gamma=2, \ q=2, \ p=2,\ \kappa=3/4, \ |\delta|=0$ and $\ell_1\in(0,1/2)$, we find
		\begin{align*}
			|J_\e^3(t,x)-J_\e^3(t,x+z)| & \leq C |z|^{\ell_1} \bigg(\int_0^t\big\|\big(u_{\e,\phi}(s)-u_{0,\phi}(s)\big)\phi(s)\big\|_{\L^2}^2\d s\bigg)^\frac{1}{2} \\&\leq C |z|^{\ell_1}\sup_{s\in[0,t]}\|u_{\e,\phi}(s)-u_{0,\phi}(s)\|_{\L^\infty}\bigg(\int_0^t\|\phi(s)\|_{\L^2}^2\d s\bigg)^{\frac{1}{2}}\\&\leq C(M)|z|^{\ell_1} \sup_{s\in[0,T]}\|u_{\e,\phi}(s)-u_{0,\phi}(s)\|_{\L^\infty}.
		\end{align*} 
	Similarly, H\"older's continuity in time $t$ follows from Lemma 3.1 (iv) and Proposition 3.5, \cite{IGCR}, for $\rho=\infty,\ \gamma=2, \ q=2, \ p=2,\ \kappa=3/4, \ |\delta|=0$ and $\ell_2\in(0,1/4)$, since 
		\begin{align*}
			\|J_\e^3(t)-J_\e^3(s)\|_{\L^\infty} &\leq C|t-s|^{\ell_2}\bigg(\int_0^t\big\|\big(u_{\e,\phi}(s)-u_{0,\phi}(s)\big)\phi(s)\big\|_{\L^2}^2\d s\bigg)^\frac{1}{2} \\&\leq C |t-s|^{\ell_2} \sup_{s\in[0,t]}\|u_{\e,\phi}(s)-u_{0,\phi}(s)\|_{\L^\infty}\bigg(\int_0^t\|\phi(s)\|_{\L^2}^2\d s\bigg)^{\frac{1}{2}}\\&\leq C(M)|t-s|^{\ell_2} \sup_{s\in[0,T]}\|u_{\e,\phi}(s)-u_{0,\phi}(s)\|_{\L^\infty}.
		\end{align*}By the above estimates, we obtain that $J_\e^3(\cdot)$ is H\"older continuous in both time and space and the H\"older norm is also uniformly bounded over $\e\in(0,1], \ \phi\in\PP_2^M$ and $\|u^0\|_{\C([0,1])} \leq N$. Thus, the family $\{J_\e^3(\cdot): \|u^0\|_{\C([0,1])}\leq N, \ \e\in(0,1], \ \phi\in\PP_2^M\}$ is  also tight in the space $\C([0,T]\times[0,1])$.
			\end{proof}	
		
		\begin{theorem}[Convergence in probability in supremum norm]\label{ContinuousCase}
			For any $T>0,\ \eta>0,\ N>0$, and $M>0$, 
			\begin{align}\label{4.1C}
				\lim_{\e\to0}\sup_{\|u^0\|_{\C([0,1])}\leq N}\sup_{\phi\in\PP_2^M}\P\bigg(\|u_{\e,\phi}-u_{0,\phi}\|_{\C([0,T]\times[0,1])}>\eta\bigg)=0.
			\end{align}
		\end{theorem}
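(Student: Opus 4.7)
The plan is to deduce the stronger-topology statement by combining the already established convergence in probability in $\C([0,T];\L^p([0,1]))$ from Theorem~\ref{UCP} with the tightness of the difference $u_{\e,\phi}-u_{0,\phi}$ in $\C([0,T]\times[0,1])$ from Lemma~\ref{TightnessInC}. The heuristic is standard: if a family is tight in a Polish space $\mathrm{X}$ and converges to $0$ in probability in a Hausdorff topology $\tau$ that is coarser than the topology of $\mathrm{X}$, then it converges to $0$ in probability in $\mathrm{X}$. Here $\mathrm{X}=\C([0,T]\times[0,1])$, the weaker topology $\tau$ is that of $\C([0,T];\L^p([0,1]))$ for $p>\max\{6,2\delta+1\}$, and the continuous embedding $\C([0,T]\times[0,1])\hookrightarrow\C([0,T];\L^p([0,1]))$ makes the identification possible.

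The concrete argument I would write is a proof by contradiction. Assume \eqref{4.1C} fails. Then there exist $\eta_0>0$, $\delta_0>0$, and sequences $\e_n\downarrow 0$, $\phi_n\in\PP_2^M$, $u^0_n$ with $\|u^0_n\|_{\C([0,1])}\leq N$ such that
\begin{equation*}
\P\bigl(\|u_{\e_n,\phi_n}-u_{0,\phi_n}\|_{\C([0,T]\times[0,1])}>\eta_0\bigr)\geq \delta_0,\qquad\text{for all }n\in\N.
\end{equation*}
By Lemma~\ref{TightnessInC}, the family $\{u_{\e_n,\phi_n}-u_{0,\phi_n}\}_{n\in\N}$ is tight in $\C([0,T]\times[0,1])$. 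Hence by Prokhorov's theorem we can extract a subsequence (not relabeled) that converges in distribution in $\C([0,T]\times[0,1])$ to some random element $\mathrm{Z}$.

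Next I would identify the limit. Since the embedding $\C([0,T]\times[0,1])\hookrightarrow\C([0,T];\L^p([0,1]))$ is continuous for every $p\geq 1$, the same subsequence also converges in distribution to $\mathrm{Z}$ in $\C([0,T];\L^p([0,1]))$, for $p>\max\{6,2\delta+1\}$. On the other hand, Theorem~\ref{UCP} yields
\begin{equation*}
\lim_{n\to\infty}\P\bigl(\|u_{\e_n,\phi_n}-u_{0,\phi_n}\|_{\C([0,T];\L^p([0,1]))}>\eta\bigr)=0,\qquad\text{for every }\eta>0,
\end{equation*}
so the subsequence converges to $0$ in probability, and therefore in distribution, in $\C([0,T];\L^p([0,1]))$. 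By uniqueness of the weak limit in this Polish space, we conclude $\mathrm{Z}=0$ a.s. Since the limit is a deterministic constant, convergence in distribution to $\mathrm{Z}=0$ in $\C([0,T]\times[0,1])$ upgrades to convergence in probability, contradicting the assumption that $\P(\|u_{\e_n,\phi_n}-u_{0,\phi_n}\|_{\C([0,T]\times[0,1])}>\eta_0)\geq\delta_0$.

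The main obstacle I anticipate is a technical one rather than conceptual: the uniformity is taken over the two parameters $u^0$ and $\phi$, so the contradiction argument must be run with both varying, and one has to make sure that the tightness furnished by Lemma~\ref{TightnessInC} is genuinely uniform over $\|u^0\|_{\C([0,1])}\leq N$ and $\phi\in\PP_2^M$ (this is how the lemma is stated, via the uniform bounds in Corollary~\ref{UCPC} and the uniform H\"older estimates on $J_\e^3$ and $J_\e^4$), and similarly that Theorem~\ref{UCP} applies uniformly over the same parameter ranges, so that the extracted subsequence actually produces a well-defined weak limit independent of the chosen parameter sequence. Once this bookkeeping is in place the proof is a routine soft-analysis argument, and the conclusion \eqref{4.1C} follows. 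Theorem~\ref{LDPinC} is then obtained by combining \eqref{4.1C} with the existence and uniqueness result Theorem~\ref{ExistenceofControled} and applying Theorem~\ref{thrm3.4}, in exact parallel to the proof of Theorem~\ref{LDPinLp}.
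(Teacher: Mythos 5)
Your proposal is correct and follows essentially the same route as the paper: tightness of $u_{\e,\phi}-u_{0,\phi}$ in $\C([0,T]\times[0,1])$ from Lemma~\ref{TightnessInC}, Prokhorov's theorem, the embedding into $\C([0,T];\L^p([0,1]))$ together with Theorem~\ref{UCP} to identify the weak limit as $0$, and the upgrade from convergence in distribution to convergence in probability because the limit is deterministic. The only cosmetic difference is that you phrase the subsequence extraction as a proof by contradiction, whereas the paper argues directly via the arbitrariness of the chosen sequences.
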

		\begin{proof}In Lemma \ref{TightnessInC}, we have shown that 
			for any $\|u^0\|_{\C([0,1])}\leq N,$ the family
			$$\left\{u_{\e,\phi}-u_{0,\phi}: \phi\in\PP_2^M, \e\in (0,1] \right\},$$
			is tight in $\C([0,T]\times[0,1])$. Therefore, by applying Prokhorov's Theorem, we get that for any sequences $\epsilon_m\rightarrow 0,\phi_m\in\PP_2^M$ and  $\|u^0_m\|_{\L^p}\leq N$, there exists a subsequence  (relabeled $(\e_m,\phi_m,u^0_m )$) such that $u_{\e_m,\phi_m}-u_{0,\phi}$ converges in distribution. Let us denote the limit function  by $u^*$. Since we have the embedding $\C([0,T]\times[0,1])\hookrightarrow \C([0,T], \L^p([0,1])),$ the above convergence  implies $u_{\e_m,\phi_m}-u_{0,\phi}\Rightarrow u^*$ in the $\C([0,T];\L^p([0,1]))$ topology also. Therefore, Theorem \ref{UCP} yields 
			$$\sup_{t\in[0,T]}\|u^*(t)\|_{\L^p}^p=0$$
			with probability $1$  and it implies that $\|u^*\|_{\C([0,T]\times[0,1])}=0$ with probability one. Since $\|u_{\e,\phi}-u_{0,\phi}\|_{\C([0,T]\times[0,1])}$ is converging to zero which is a constant (non-random), therefore
			$$\lim_{m\rightarrow \infty}\P\left(\|u_{\e_m,\phi_m}-u_{0,\phi}\|_{\C([0,T]\times[0,1])}>\eta\right)=0.$$
			Due to the arbitrariness of  $\e_m\downarrow 0, \|u^0_m\|_{\C([0,1])}\leq R$ and $\phi_m\in \PP_2^M$, we obtain the required result \eqref{4.1C}.	
		\end{proof}
		\begin{proof}[Proof of Theorem \ref{LDPinC}]: Combining Theorems \ref{ExistenceofControled} and \ref{ContinuousCase} yield the required result.
		\end{proof}

		\section{Bounded Non-linearity  with Space-time White Noise}\label{ULDPBDD}
		In this section, we define the noise as the derivatives of  Brownian sheet $\frac{\partial^2\W(\cdot,\cdot)}{\partial t\partial x}:=\mathcal{W}(\cdot,\cdot)$ on the probability space $(\Omega,\mathscr{F},\{\mathscr{F}_t\}_{t\geq 0},\P)$, that is, space-time white noise (see Subsection \ref{SN}). Note that the growth condition of the coefficient $g$ was linear in the previous sections, but we assume that the coefficient $g$ is of bounded nonlinearity in this section. Moreover, we prove our main results of this section for the case $p\geq  2\delta+1$. Let us start  by  introducing the assumptions on the noise coefficient $g$:
		\begin{hypothesis}\label{hyp2}
			The function $g:[0,T]\times[0,1]\times\R\to\R$ is a measurable function  satisfying 
			\begin{align}\label{AP1}
				|g(t,x,r)|\leq K,\ \ \text{ and } \ \ |g(t,x,r)-g(t,x,s)|\leq L|r-s|,			
			\end{align}for all $t\in[0,T],\ x\in[0,1]$ and $r,s,\in\R$, where $K,L$ are some positive constants.
		\end{hypothesis}
		For the basic definitions of EULP and ULDP, we refer Section \ref{SecULDP}. Let us comment on the solvability under the new Hypothesis \ref{hyp2}. In this part we are not using  the global solvability result obtained in Section \ref{Sec3}, since the noise is space-time white noise.  For this part, we recall the following solvability result from \cite{AKMTM3} under Hypothesis \ref{hyp2}.
		\begin{theorem}[Theorem 3.5, \cite{AKMTM3}]\label{thrm2.3R}
			Let us assume that the initial data $u^0\in\L^p([0,1])$, for $p\geq 2\delta+1$. Then, there exists a unique $\L^p([0,1])$-valued $\mathscr{F}_t$-adapted continuous process $u_\e$, for $\e\in(0,1]$ satisfying \eqref{2.2} such that 
			\begin{align}\label{AP2}
				\E\bigg[\sup_{t\in[0,T]}\|u_\e(t)\|_{\L^p}^p\bigg]\leq C(T).
			\end{align}
		\end{theorem}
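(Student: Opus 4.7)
The plan is to reproduce, with appropriate modifications, the local-then-global scheme of Section \ref{Sec3}, taking advantage of the fact that the noise coefficient $g$ is now bounded (Hypothesis \ref{hyp2}) so that the non-summability of the coefficients in the space-time white noise expansion can be absorbed into Green's function bounds.

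First, I would introduce the same truncated system as \eqref{E2}--\eqref{E3}, but with $\mathcal{W}^{\Q}$ replaced by the space-time white noise $\dot{\W}$ and with $g$ bounded. Defining the same operator $\mathscr{A} = \mathscr{A}_1 + \frac{\alpha}{\delta+1}\mathscr{A}_2 + \beta\mathscr{A}_3 + \mathscr{A}_4$ on the Banach space $\mathcal{H}$ of adapted $\L^p([0,1])$-valued processes with norm $\|u\|_{\mathcal{H}}^p = \sup_{t\in[0,T]} e^{-\lambda p t}\E[\|u(t)\|_{\L^p}^p]$, the estimates for $\mathscr{A}_1,\mathscr{A}_2,\mathscr{A}_3$ are identical to those in Theorem \ref{thrmE1}; only $\mathscr{A}_4$ must be reconsidered. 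Using BDG in UMD spaces \eqref{18} together with the boundedness of $g$ by $K$ and the standard estimate $\int_0^t\int_0^1 G^2(t-s,x,y)\,\d y\,\d s \leq C\sqrt{t}$ (Proposition 3.7 of \cite{JBW}), I would derive
\begin{align*}
\E\bigl[\|\mathscr{A}_4 u_{\e,R}(t)\|_{\L^p}^p\bigr]
\leq C\e^{p/2} K^p \,\Bigl\|\Bigl(\int_0^t\!\!\int_0^1 G^2(t-s,\cdot,y)\,\d y\,\d s\Bigr)^{1/2}\Bigr\|_{\L^p}^p
\leq C(\e,K,p)\,t^{p/4},
\end{align*}
which is \emph{independent} of $u_{\e,R}$ and holds for every $p\geq 2$. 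The contraction estimate for $\mathscr{A}_4$ then uses the Lipschitz property of $g$ and an analogous computation; choosing $\lambda$ large enough exactly as in \eqref{E18}, the Banach fixed point theorem yields a unique mild solution $u_{\e,R}$ of the truncated equation with the bound $\E\sup_{t\in[0,T]}\|u_{\e,R}(t)\|_{\L^p}^p \leq C(R,T)$.

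Next, I would establish a uniform energy estimate in $R$. Because $g$ is bounded, the Itô-type argument of Lemma \ref{lemUE}, carried out after a finite-dimensional Galerkin/smoothing of the noise (as in \cite{IGCR}) and passed to the limit, produces
\begin{align*}
\E\Bigl[\sup_{t\in[0,T]}\|u_{\e,R}(t)\|_{\L^p}^p + \nu p(p-1)\int_0^T\!\|\,|u_{\e,R}(s)|^{\frac{p-2}{2}}\partial_x u_{\e,R}(s)\|_{\L^2}^2\,\d s + p\beta\int_0^T\!\|u_{\e,R}(s)\|_{\L^{p+2\delta}}^{p+2\delta}\,\d s\Bigr]\\
\leq C\bigl(1+\|u^0\|_{\L^p}^p\bigr),
\end{align*}
for $p\geq 2\delta+1$, with the constant $C$ \emph{independent of $R$}: the martingale term is controlled by $K$ rather than by $(1+\|u_{\e,R}\|)$, so the Gronwall step closes without requiring a truncation-dependent constant. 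Standard stopping-time arguments with $\tau_R := \inf\{t \geq 0 : \|u_{\e,R}(t)\|_{\L^p} \geq R\}\wedge T$ then give $\P(\tau_R < T) \leq R^{-p}\,C(1+\|u^0\|_{\L^p}^p)\to 0$ as $R\to\infty$, so that $u_{\e,R}$ coincides with a well-defined global mild solution $u_\e$ on $[0,T]$, which satisfies \eqref{AP2}. Uniqueness of $u_\e$ follows on each $[0,\tau_{1,R}\wedge \tau_{2,R}]$ exactly as in the proof of Theorem \ref{thrmE2}.

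The main technical obstacle is the uniform energy estimate: the Itô formula of Lemma 4.2 of \cite{IG} is not directly applicable to solutions driven by space-time white noise, because the formal Itô correction involves $\sum_{j}\varphi_j^2(x)$, which diverges. I would handle this by approximating $\dot{\W}$ by its truncated expansion $\dot{\W}^{(N)} = \sum_{j\leq N}\varphi_j\dot{\beta}_j$, applying the Itô calculation of Lemma \ref{lemUE} to the resulting (colored) equation, and passing to the limit $N\to\infty$. The decisive simplification relative to Section \ref{Sec3} is that, thanks to boundedness of $g$, the Itô correction $\frac{\e}{2}p(p-1)\sum_{j\leq N}\int_0^1|u|^{p-2}g^2 \varphi_j^2 \,\d x$ is controlled \emph{after} integration against the Green's function in the mild form (equivalently, via the $\L^2$-bound on $G$ in space-time), so no divergent constant appears upon passing $N\to\infty$; this is precisely what is ruled out when $g$ has only linear growth and allows the lower threshold $p\geq 2\delta+1$, in contrast with the threshold $p>\max\{6,2\delta+1\}$ forced by the tightness analysis of Lemma \ref{lemE2}.
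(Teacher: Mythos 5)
The paper does not actually prove Theorem \ref{thrm2.3R}: it is imported verbatim from Theorem 3.5 of \cite{AKMTM3}, so there is no in-paper proof to compare against. Your local step (truncation, the weighted space $\mathcal{H}$, the contraction for $\mathscr{A}_1,\dots,\mathscr{A}_4$ with the $\mathscr{A}_4$ bound simplified by $|g|\le K$ and $\int_0^t\int_0^1 G^2(t-s,x,y)\,\d y\,\d s\le C\sqrt{t}$) and the final stopping-time globalization are sound and mirror Theorem \ref{thrmE1}. The genuine gap is in your uniform energy estimate, which is the heart of the matter.

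For space-time white noise the It\^o correction for $\|u^{(N)}\|_{\L^p}^p$, with the noise truncated to $N$ modes, is $\tfrac{\e}{2}p(p-1)\int_0^t\int_0^1|u^{(N)}|^{p-2}g^2\sum_{j\le N}\varphi_j^2(x)\,\d x\,\d s$, and since $\varphi_j^2(x)=2\sin^2(j\pi x)$ one has $\sum_{j\le N}\varphi_j^2(x)\sim N$ for a.e.\ $x$; boundedness of $g$ does not prevent this from diverging as $N\to\infty$. The mechanism you invoke to tame it --- ``controlled after integration against the Green's function in the mild form'' --- does not exist: the Green's function never enters the It\^o computation for $\|u\|_{\L^p}^p$. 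What actually happens is that the divergence of the correction is compensated by the divergence of the Dirichlet term: a white-noise-driven solution has spatial H\"older regularity strictly below $1/2$, so $\int_0^T\||u_\e(s)|^{\frac{p-2}{2}}\partial_x u_\e(s)\|_{\L^2}^2\,\d s=+\infty$ almost surely, and the estimate you display (which keeps this term on the left-hand side) is false as stated; the two infinities cannot be separated by a termwise passage to the limit in $N$. The correct route --- the one this paper itself uses for the white-noise case in Section \ref{ULDPBDD}, following \cite{IG} and \cite{AKMTM3} --- is to split $u_\e=z_\e+\zeta_\e$ as in \eqref{AP3}, bound $\sup_{(t,x)}|\zeta_\e(t,x)|$ in every moment via Kolmogorov's continuity theorem (this is precisely where $|g|\le K$ is exploited, cf.\ \eqref{AP4}), and then run the $\L^q$ energy argument only on $z_\e$, which solves a PDE with no stochastic integral and therefore genuinely possesses the weighted-gradient regularity (cf.\ Lemma \ref{lem3.5}); the bound \eqref{AP2} for $u_\e$ then follows for $p\ge 2\delta+1$ by combining the two pieces. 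So the conclusion is correct and most of your architecture survives, but the energy-estimate step must be replaced by the splitting argument.
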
Our aim is to prove ULDP under the new Hypothesis \ref{hyp2} on the noise coefficient.

		Let us define $\displaystyle\zeta_{\e,\phi}(t,x):= \sqrt{\e} 
		\int_0^t\int_0^1G(t-s,x,y)g(s,y,u_{\e,\phi}(s,y))\W(\d s,\d y)$, and set 
		\begin{align}\label{AP3}
			z_{\e,\phi}:=u_{\e,\phi}-\zeta_{\e,\phi}.
		\end{align}In order to prove that $u_{\e,\phi}(\cdot)$ is bounded in probability uniformly in $\e,u^0$ and $\phi$, it is enough to show that both $z_{\e,\phi}(\cdot)$ and $\zeta_{\e,\phi}(\cdot)$ are bounded in probability uniformly in $\e,u^0$ and $\phi$. We start with a result which  gives the required bound for $\zeta_{\e,\phi}(\cdot)$.
		
		Using Hypothesis \ref{hyp2}, Corollary 4.3, \cite{IG} for every $p\geq1,\ T>0$, there exists  a constant $C$ such that the following holds:
		\begin{align*}
			\E\big[|\zeta(t,x)-\zeta(s,y)|^{2p}\big]\leq C\big(|t-s|^\frac{1}{4}+|x-y|^\frac{1}{2}\big)^{2p}, \ \ \text{ for all } \ \ s,t\in[0,T], \ \ x,y\in[0,1].
		\end{align*}Hence by Kolmogorov's continuity  theorem (see \cite{AGERHR,JBW}), we have 
		\begin{align}\label{AP4}
			\sup_{\|u^0\|_{\L^p}\leq  N}\sup_{\e\in(0,1]}\sup_{\phi\in\PP_2^M}\E\bigg[\sup_{(t,x)\in[0,T]\times[0,1]}|\zeta_{\e,\phi}(t,x)|^p\bigg]<\infty.
		\end{align}Define 
		\begin{align}\label{AP5}
			\zeta^*_{\e,\phi}:=\sup_{(t,x)\in[0,T]\times[0,1]}|\zeta_{\e,\phi}(t,x)|,
		\end{align}then, applying Markov's inequality yields $\zeta^*_{\e,\phi}$ is bounded in probability uniformly in $\e,u^0$ and $\phi$. Let us now prove that $z_{\e,\phi}(\cdot)$ is bounded in  probability uniformly in $\e, u^0$ and $\phi$.
	
		\begin{lemma}\label{lem3.5}
			The random field $\sup\limits_{t\in[0,T]}\|z_{\e,\phi}(t)\|_{\L^p}$ is bounded in probability, that is, for any given $T>0$, $M>0$, and $N>0$, 
			\begin{align}\label{AP6}
				\lim_{R\to\infty}\sup_{\|u^0\|_{\L^p}\leq N}\sup_{\phi\in\PP_2^M}\sup_{\e\in(0,1]}\P\bigg[\sup_{t\in[0,T]}\|z_{\e,\phi}(t)\|_{\L^p}>R\bigg]=0,
			\end{align}
		for all $p\geq 2\delta+1$. 
		\end{lemma}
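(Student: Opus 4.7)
The plan is to exploit the decomposition $u_{\e,\phi}=z_{\e,\phi}+\zeta_{\e,\phi}$ to convert the stochastic controlled equation into a pathwise random PDE for $z_{\e,\phi}$, and then perform an $\L^p$ energy estimate $\omega$-wise. Subtracting the linear heat equation satisfied by $\zeta_{\e,\phi}$ from the controlled mild equation for $u_{\e,\phi}$, one sees that $z_{\e,\phi}$ solves
\[
\partial_t z_{\e,\phi}=\nu\partial_x^2 z_{\e,\phi}+\frac{\alpha}{\delta+1}\partial_x\p(z_{\e,\phi}+\zeta_{\e,\phi})+\beta\c(z_{\e,\phi}+\zeta_{\e,\phi})+g(\cdot,z_{\e,\phi}+\zeta_{\e,\phi})\phi,
\]
with $z_{\e,\phi}(0,\cdot)=u^{0}$ and homogeneous Dirichlet boundary conditions. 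No stochastic integral is present, so the deterministic $\L^p$-chain rule (as in Lemma~4.2 of \cite{IG}) applies $\omega$-wise to $\|z_{\e,\phi}(t)\|_{\L^p}^p$, using that $u_{\e,\phi}$ is in $\L^{p+2\delta}$ in space-time by Theorem \ref{thrm2.3R} and that $\zeta_{\e,\phi}$ is a.s. continuous on $[0,T]\times[0,1]$.

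The resulting identity contains a dissipation term $-\nu p(p-1)\||z_{\e,\phi}|^{(p-2)/2}\partial_x z_{\e,\phi}\|_{\L^2}^2$ together with convection, reaction, and control contributions. The central observation is that the pure-$z_{\e,\phi}$ part of the convection, namely $\frac{\alpha}{\delta+1}\partial_x z_{\e,\phi}^{\delta+1}$ tested against $|z_{\e,\phi}|^{p-2}z_{\e,\phi}$, vanishes by the divergence structure and the Dirichlet boundary conditions (as in Step~2 of the proof of Lemma \ref{lemUE}); only the $\zeta_{\e,\phi}$-containing cross terms in $(z_{\e,\phi}+\zeta_{\e,\phi})^{\delta+1}-z_{\e,\phi}^{\delta+1}$ survive. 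After Cauchy--Schwarz and Young's inequality (absorbing half the dissipation), these cross terms produce remainders of the form $C_k(\zeta^{*}_{\e,\phi})^{2k}\|z_{\e,\phi}\|_{\L^{p+2\delta-2k}}^{p+2\delta-2k}$ for $k=1,\ldots,\delta+1$. A further Young's splitting, combined with the trivial embedding $\L^{p+2\delta}([0,1])\hookrightarrow\L^{p+2\delta-2k}([0,1])$, converts each remainder into $\mu\|z_{\e,\phi}\|_{\L^{p+2\delta}}^{p+2\delta}+C_{\mu,k}(\zeta^{*}_{\e,\phi})^{p+2\delta}$, with $\mu>0$ a free small parameter.

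For the reaction term, expanding $\c(z_{\e,\phi}+\zeta_{\e,\phi})$ yields the dominant negative contribution $-\beta p\|z_{\e,\phi}\|_{\L^{p+2\delta}}^{p+2\delta}$ coming from $-(z_{\e,\phi}+\zeta_{\e,\phi})^{2\delta+1}|z_{\e,\phi}|^{p-2}z_{\e,\phi}$, which for $\mu$ chosen small enough absorbs both the convection remainder above and the positive $\zeta$-cross terms produced by $\c$; only lower-order terms bounded by $\|z_{\e,\phi}\|_{\L^p}^{p}$ and by powers of $\zeta^{*}_{\e,\phi}$ are left. Hypothesis \ref{hyp2} gives $|g|\le K$, so the control drift satisfies $\bigl|p\int |z_{\e,\phi}|^{p-2}z_{\e,\phi}\,g(\cdot)\phi\,\d x\bigr|\le C K(1+\|\phi\|_{\L^2}^2)(1+\|z_{\e,\phi}\|_{\L^p}^p)$ after H\"older and Young. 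Integrating in time and applying Gronwall's inequality with the integrable factor $1+\|\phi(s)\|_{\L^2}^2$ (bounded by $1+M$ since $\phi\in\PP_2^M$) yields the $\omega$-wise bound
\[
\sup_{t\in[0,T]}\|z_{\e,\phi}(t)\|_{\L^p}^{p}\le C\bigl(1+\|u^{0}\|_{\L^p}^{p}+(\zeta^{*}_{\e,\phi})^{p+2\delta}\bigr)\exp\bigl(C(1+M)\bigr).
\]
Since $\|u^0\|_{\L^p}\le N$ is deterministic and $\zeta^{*}_{\e,\phi}$ is bounded in probability uniformly in $\e$, $\phi\in\PP_2^M$ and $\|u^{0}\|_{\L^p}\le N$ by \eqref{AP4}, an application of Markov's inequality delivers \eqref{AP6}.

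The main obstacle will be the careful Young-inequality bookkeeping used to absorb the convection and reaction cross-terms into the reaction's dominant negative contribution: one must verify that the total positive coefficient in front of $\|z_{\e,\phi}\|_{\L^{p+2\delta}}^{p+2\delta}$ produced by all $\zeta$-cross terms can be made strictly smaller than $\beta p$ by the choice of the (free) Young parameters. This is precisely why the divergence cancellation of the pure-$z_{\e,\phi}$ convection is essential, since without it the convection's contribution to the $\L^{p+2\delta}$-coefficient would be of order $\alpha^2/\nu$ independently of Young's parameter, potentially preventing absorption.
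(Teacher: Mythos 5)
Your overall strategy is the paper's: pass to the pathwise PDE for $z_{\e,\phi}=u_{\e,\phi}-\zeta_{\e,\phi}$, run a deterministic $\L^p$ energy estimate using the divergence cancellation of the pure-$z$ convection, absorb all $\zeta$-cross terms into the dominant dissipative reaction contribution $-\beta p\|z_{\e,\phi}\|_{\L^{p+2\delta}}^{p+2\delta}$ via Young's inequality, apply Gronwall, and finish with the uniform bound \eqref{AP4} on $\zeta^*_{\e,\phi}$ and Markov's inequality. However, there is a genuine gap in your treatment of the control term. The term is $p\int_0^1|z_{\e,\phi}|^{p-2}z_{\e,\phi}\,g\,\phi\,\d x$, and with $|g|\le K$ and $\phi(s,\cdot)\in\L^2([0,1])$, H\"older in space forces
\begin{align*}
\Big|p\int_0^1|z_{\e,\phi}|^{p-2}z_{\e,\phi}\,g\,\phi\,\d x\Big|\le pK\|\phi\|_{\L^2}\|z_{\e,\phi}\|_{\L^{2(p-1)}}^{p-1},
\end{align*}
not a bound in terms of $\|z_{\e,\phi}\|_{\L^p}^{p}$. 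Since $2(p-1)\ge p$ for $p\ge 2$, the claimed estimate $CK(1+\|\phi\|_{\L^2}^2)(1+\|z_{\e,\phi}\|_{\L^p}^p)$ does not follow from H\"older and Young alone. One can interpolate $\L^{2(p-1)}$ between $\L^p$ and $\L^{p+2\delta}$ and absorb the result into the reaction's negative term only when $2(p-1)\le p+2\delta$, i.e.\ $p\le 2\delta+2$; for $p>2\delta+2$ the norm $\|z_{\e,\phi}\|_{\L^{2(p-1)}}^{2(p-1)}$ produced by Young's inequality strictly exceeds what the available dissipation $\|z_{\e,\phi}\|_{\L^{p+2\delta}}^{p+2\delta}$ can control, so your Gronwall closure fails precisely on the control term for large $p$, which is in the claimed range $p\ge 2\delta+1$ (e.g.\ $\delta=1$, $p=5$).

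The paper closes this gap with a bootstrap that your argument omits. It first establishes the estimate for exponents $q\in[2,2\delta+2]$, where the interpolation $\|z_{\e,\phi}\|_{\L^{2(q-1)}}^{q-1}\le\|z_{\e,\phi}\|_{\L^q}^{q(2\delta+2-q)/4\delta}\|z_{\e,\phi}\|_{\L^{q+2\delta}}^{(q-2)(2\delta+q)/4\delta}$ works (cf.\ \eqref{530}). It then exploits the gradient dissipation term together with the Sobolev embedding $\H_0^1\hookrightarrow\L^\infty$ (cf.\ \eqref{528}) to conclude $\int_0^T\|z_{\e,\phi}(s)\|_{\L^\infty}^q\d s\le\mathcal{K}$ for $q\le 2\delta+2$, and uses this space-time $\L^\infty$ integrability to handle the control term at the next exponent $q=2\delta+3$ via $\|\,|z_{\e,\phi}|^{q-1}\|_{\L^2}\le\|z_{\e,\phi}\|_{\L^\infty}^{\delta+1}\|z_{\e,\phi}\|_{\L^{2(\delta+1)}}^{\delta+1}$, iterating to reach all $q\in[2,\infty)$. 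You should incorporate this induction on the exponent (or some equivalent mechanism using the dissipation term) before the Gronwall step; the rest of your argument is sound.
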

		
		Before embarking to the proof, let us provide a useful result which will be used in the proof. 
			\begin{remark}
			Let us take $u\in\C_0^{\infty}([0,1])$. Then $|u|^{p/2}\in\H_0^1([0,1])$, for all $p\in[2,\infty]$, and from the Sobolev embedding, we have $\H_0^1([0,1])\hookrightarrow\L^p([0,1])$, for all $p\in[2,\infty]$, we obtain 
			\begin{align*}
				\|u\|^{q}_{\L^{pq}}=\||u|^{\frac{q}{2}}\|_{\L^{2p}}^2\leq C\int_{0}^1|\partial_x|u(x)|^{\frac{q}{2}}|^2\d x.
			\end{align*}
			But it can be easily seen that $|\partial_x|u|^{\frac{q}{2}}|^2\leq C_q|u|^{q-2}|\partial_xu|^2$. Therefore we  have 
			\begin{align*}
				\|u\|^{q}_{\L^{pq}}\leq C_q\int_{0}^1|\partial_xu(x)|^2|u(x)|^{q-2}\d x,
			\end{align*}
			for any $p\in[2,\infty]$.  In particular, we have 
			\begin{align}\label{528}
				\int_0^t\|u(s)\|_{\L^{\infty}}^q\d s\leq C_q\int_0^t\||u(s)|^{\frac{q-2}{2}}\partial_xu(s)\|_{\L^2}^2\d s,
			\end{align}
			for all $t\in[0,T]$. 
		\end{remark}
		
		\begin{proof}[Proof of Lemma \ref{lem3.5}] Note that the process $z_{\e,\phi}(\cdot)$ satisfies the following system:
			\begin {equation}\label{AP7}
			\left\{
			\begin{aligned}
				\frac{\partial z_{\e,\phi}}{\partial t}(t,x)&=\nu\frac{\partial^2 z_{\e,\phi}}{\partial x^2}(t,x)+\beta\c((z_{\e,\phi}+\zeta_{\e,\phi})(t,x))-\frac{\alpha}{\delta+1}\frac{\partial}{\partial x} \p((z_{\e,\phi}+\zeta_{\e,\phi})(t,x))\\&\quad +g(t,x,(z_{\e,\phi}+\zeta_{\e,\phi})(t,x))\phi(t,x),\\
				z_{\e,\phi}(t,0)&=z_{\e,\phi}(t,1)=0, \ \ t\in(0,T],\\
				z_{\e,\phi}(0,x)&=u^0(x), \ \ x\in[0,1].
			\end{aligned}
			\right.
		\end{equation}
			One can follow similar arguments in the proof of  Proposition 3.4, \cite{AKMTM3} to obtain the $\L^p$-energy estimate. In fact, the term  \begin{align}\label{529}I(t):=\int_0^t\big(g(s,(z_{\e,\phi}+\zeta_{\e,\phi})(s))\phi(s),|z_{\e,\phi}(s)|^{q-2}z(s)\big)\d s,\end{align}while taking the inner product with $|z_{\e,\phi}|^{q-2}z_{\e,\phi}$ in  \eqref{AP7}, can be handled in the following way: 
				\begin{align}\label{530}\nonumber
			|	I(t)|&\leq K\int_0^t\|\phi(s)\|_{\L^2}\|z_{\e,\phi}(s)\|_{\L^{2(q-1)}}^{q-1}\d s \nonumber\\&\leq K\int_0^t\|\phi(s)\|_{\L^2}\|z_{\e,\phi}(s)\|_{\L^q}^{\frac{q(2\delta+2-q)}{4\delta}}\|z_{\e,\phi}(s)\|_{\L^{q+2\delta}}^{\frac{(q-2)(2\delta+q)}{4\delta}}\d s\nonumber\\&\nonumber\leq \frac{p\beta}{16}\int_0^t\|z_{\e,\phi}(s)\|_{\L^{q+2\delta}}^{q+2\delta}\d s+C\int_0^t \|\phi(s)\|_{\L^2}^{\frac{4\delta}{4\delta-q+2}}\|z_{\e,\phi}(s)\|_{\L^q}^{\frac{q(2\delta+2-q)}{4\delta-q+2}}\d s\\&\leq  \frac{p\beta}{16}\int_0^t\|z_{\e,\phi}(s)\|_{\L^{q+2\delta}}^{q+2\delta}\d s+C\int_0^t\|\phi(s)\|_{\L^2}^2\d s+C\int_0^t\|z_{\e,\phi}(s)\|_{\L^q}^q\d s,
			\end{align}
	provided $2\leq q\leq 2\delta+2$, 	where we have used the uniform bound for $g$, interpolation, H\"older's and Young's inequalities. Now taking the inner product with $|z_{\e,\phi}|^{q-2}z_{\e,\phi}$ and performing calculations similar to the proof of  Proposition 3.4, \cite{AKMTM3} and \eqref{530}, we deduce for all $t\in[0,T]$ and $q\in[2,2\delta+2]$
		 \begin{align}\label{3.13}\nonumber
			&\|z_{\e,\phi}(t)\|_{\L^q}^q+\frac{\nu q(q-1)}{2}\int_0^t\||z_{\e,\phi}(s)|^{\frac{q-2}{2}}\partial_x z_{\e,\phi}(s)\|_{\L^2}^2\d s+\beta p\gamma\int_0^t\|z_{\e,\phi}(s)\|_{\L^q}^q\d s\nonumber\\&\nonumber\quad+\frac{\beta q}{8}\int_0^t\|z_{\e,\phi}(s)\|_{\L^{q+2\delta}}^{q+2\delta}\d s \\& \leq e^{CT}\bigg\{ \|u^0\|_{\L^q}^q+CT+CT\sup_{t\in[0,T]}\|\zeta_{\e,\phi}(t)\|_{\L^{q(\delta+1)}}^{q(\delta+1)}+CT\sup_{t\in[0,T]}\|\zeta_{\e,\phi}(t)\|_{\L^{q+2\delta}}^{q+2\delta}\nonumber\\&\qquad+C\int_0^T\|\phi(t)\|_{\L^2}^2\d t\bigg\}=:\mathcal{K}(u^0,\phi,\zeta^*_{\e,\phi},T),
		\end{align}
	where $C$ is a positive constant. 

	From \eqref{528}, one can easily deduce that 
	\begin{align}\label{533}
	\sup_{s\in[0,T]}\|z_{\e,\phi}(t)\|_{\L^q}^q+	\int_0^T\|z_{\e,\phi}(t)\|_{\L^{\infty}}^q\d t\leq \mathcal{K}(u^0,\phi,\zeta^*_{\e,\phi},T),
	\end{align}
for all $q\in[2,2\delta+2].$ 

Let us now consider the case $q=2\delta+3$. We just need to estimate the term $I(t)$ given in \eqref{529} in a proper way. Using H\"older's and Young's inequalities and \eqref{533}, we estimate 
\begin{align}
	|I(t)|&\leq K\int_0^t\|\phi(s)\|_{\L^2}\||z_{\e,\phi}(s)|^{2\delta+1}z_{\e,\phi}(s)\|_{\L^{2}}\d s \nonumber\\&\leq K\int_0^t\|\phi(s)\|_{\L^2}\|z_{\e,\phi}(s)\|_{\L^{\infty}}^{\delta+1}\|z_{\e,\phi}(s)\|_{\L^{2(\delta+1)}}^{\delta+1}\d s\nonumber\\&\leq \int_0^t\|z_{\e,\phi}(s)\|_{\L^{\infty}}^{2(\delta+1)}\d s+C\sup_{s\in[0,t]}\|z_{\e,\phi}(s)\|_{\L^{2(\delta+1)}}^{2(\delta+1)}\int_0^t\|\phi(s)\|_{\L^2}^2\d s,
\end{align}
for all $t\in[0,T]$. Therefore, \eqref{3.13} holds true for all $q\in[2,2\delta+3]$, and a similar calculation yields for $q=2\delta+4$. Continuing like this, one can show that \eqref{3.13}  holds true for all $q\in[2,\infty)$.

	Since, $\zeta^*_{\e,\phi}(\cdot)$ is bounded in probability, an application of Markov's inequality results that $\sup\limits_{t\in[0,T]} 	\|z_{\e,\phi}(t)\|_{\L^q}$, for $q\in[2,\infty)$ is also uniformly bounded in probability.
	\end{proof}
	Combining \eqref{AP4}, \eqref{AP5} and Lemma \ref{lem3.5}, one can establish Corollary \ref{UCPC}, that is, the random field $\sup\limits_{t\in[0,T]}\|u_{\e,\phi}(t)\|_{\L^p}$ is bounded in probability for $p\geq 2\delta+1$.
	
	In order to  prove our main Theorem \ref{LDPinLp} under Hypothesis \ref{hyp2}, it is sufficient to verify  Condition \ref{cond}. Now, for the verification of Condition \ref{cond}, we need to prove Theorem \ref{UCP} under Hypothesis \ref{hyp2}, that is, the uniform convergence in  probability.
	
	\begin{proof}[Proof of Theorem \ref{UCP}]
		For any $\phi\in\PP_2^M, \ \|u^0\|_{\L^p}\leq N$, for $p\geq 2\delta+1$ and $\e\in(0,1]$, we have 
		\begin{align}\label{AP14}\nonumber
		&	u_{\e,\phi}(t,x)-u_{0,\phi}(t,x)\\&\nonumber=\frac{\alpha}{\delta+1}\int_0^t\int_0^1\frac{\partial G}{\partial y}(t-s,x,y)\big(\p(u_{\e,\phi}(s,y))-\p(u_{0,\phi}(s,y))\big)\d y\d s\\&\nonumber\quad +\beta\int_0^t\int_0^1G(t-s,x,y)\big(\c(u_{\e,\phi}(s,y))-\c(u_{0,\phi}(s,y))\big)\d y\d s\\&\nonumber\nonumber\quad+
			\int_0^t\int_0^1G(t-s,x,y)\big(g(s,y,u_{\e,\phi}(s,y))-g(s,y,u_{0,\phi}(s,y))\big)\phi(s,y)\d y\d s\\&\nonumber\quad+ 
			\sqrt{\e} 
			\int_0^t\int_0^1G(t-s,x,y)g(s,y,u_{\e,\phi}(s,y))\W(\d s,\d y)\\&=\frac{\alpha}{\delta+1}J_\e^1(t,x)+\beta J_\e^2(t,x)+J_\e^3(t,x)+\sqrt{\e}J_\e^4(t,x).
		\end{align}Now, we consider $J_\e^1(\cdot,\cdot)$.  From \eqref{4.3}-\eqref{4.5}, we have
		\begin{align}\label{AP17}
			\|J_\e^1(t)\|_{\L^p}^p \chi_{\Upsilon_{P}^\e}\leq C \chi_{\Upsilon_{P}^\e}\int_0^t(t-s)^{-\frac{1}{2}-\frac{\delta}{2p}}\|u_{\e,\phi}(s)-u_{\e,\phi}(s)\|_{\L^p}^p\d s,
		\end{align}where $\chi_A$ is the characteristic function of the set $A$.
		Using \eqref{4.4}, \eqref{4.6} and \eqref{4.7}, we obtain
		\begin{align}\label{AP19}\nonumber
			&	\|J_\e^2(t)\|_{\L^p}^p \chi_{\Upsilon_{P}^\e}\\&\leq C \chi_{\Upsilon_{P}^\e}\int_0^t\bigg\{(t-s)^{-\frac{\delta}{2p}}(1+\delta)(1+\gamma)+\gamma+(t-s)^{-\frac{\delta}{p}}(1+2\delta)\bigg\}\|u_{\e,\phi}(s)-u_{0,\phi}(s)\|_{\L^p}^p\d s,
		\end{align}
	where we have used H\"older's inequality. Using Lemma 3.4, \cite{IGDN}, Hypothesis \ref{hyp2} (bound of $g$) and \eqref{4.4}, we find 
		\begin{align}\label{AP20}\nonumber
			\E\bigg[\sup_{t\in[0,T]}\|J_\e^3(t)\|_{\L^p}^p\bigg]&\leq \E\bigg[\sup_{t\in[0,T]}\bigg\|\int_0^t\int_0^1G(t-s,\cdot,y)g(s,y,u_{\e,\phi}(s,y))\W(\d s,\d y)\bigg\|_{\L^p}^p\bigg]\\&\leq C(p,T,K), 
		\end{align}for $p>2$. Combining \eqref{AP14}-\eqref{AP20} and applying Fubini's theorem followed by Gronwall's inequality, we arrive at
		\begin{align}\label{AP21}\nonumber
			&\E\bigg[\sup_{t\in[0,T]}\|u_{\e,\phi}(t)-u_{0,\phi}(t)\|_{\L^p}^p\chi_{\Upsilon_{P}^\e}\bigg]\\& \nonumber\leq C\e^{\frac{p}{2}}\exp\bigg\{C\int_0^T\left[|t-s|^{-\frac{1}{2}-\frac{\delta}{2p}}+|t-s|^{-\frac{\delta}{2p}}+\gamma+|t-s|^{-\frac{\delta}{p}}\right]\d s\bigg\}\\&\leq C(\alpha,\beta,\delta,\gamma,p,T,M,K,P)\e^{\frac{p}{2}}.
		\end{align}For any $\vp>0$. Using similar calculations as in \eqref{4.10} and Theorem \ref{thrm2.3R}, there exists $P>0$ large enough such that
		\begin{align}\label{AP22}
			&	\sup_{\|u^0\|_{\L^p}\leq N}\sup_{\phi\in\PP_2^M}\sup_{\e\in(0,1]}\big[1-\P(\Upsilon_{P}^\e)\big] \leq \vp.
		\end{align}Using \eqref{AP21}, \eqref{AP22}, and Markov's inequality, for any $\e\in(0,1], u^0\in\L^p([0,1])$ and $\phi\in\PP_2^M$,
		\begin{align}\label{AP23}\nonumber
			&	\sup_{\|u^0\|_{\L^p}\leq N}\sup_{\phi\in\PP_2^M}\P\bigg(\sup_{t\in[0,T]}\|u_{\e,\phi}(t)-u_{0,\phi}(t)\|_{\L^p}>\eta\bigg) \\&\nonumber\leq 	\sup_{\|u^0\|_{\L^p}\leq N}\sup_{\phi\in\PP_2^M}\big[1-\P(\Upsilon_{P}^\e)\big]+\sup_{\|u^0\|_{\L^p}\leq N}\sup_{\phi\in\PP_2^M}\P\bigg(\sup_{t\in[0,T]}\|u_{\e,\phi}(t)-u_{0,\phi}(t)\|_{\L^p}\chi_{\Upsilon_{P}^\e}>\eta\bigg)\\& \leq \vp +\frac{C(\alpha,\beta,\delta,\gamma,p,T,M,P)\e^{\frac{p}{2}}}{\eta^p}. 
		\end{align}Due to arbitrary choice of $\vp$, we obtain the required result \eqref{4.1}.
	\end{proof}
	\begin{proof}[Proof of Theorem \ref{LDPinLp}]
		We established Theorem \ref{UCP} under Hypothesis \ref{hyp2}, which validates Condition \ref{cond}. Therefore,  EULP holds and by Theorem \ref{thrm3.3}, we conclude that ULDP remains valid. 
	\end{proof}
	
		\begin{proof}[Proof of Theorem \ref{LDPinC}]: One can obtain the proof of Lemma \ref{TightnessInC} and Theorem \ref{ContinuousCase} by repeating the same calculations, since the Lipschitz condition of the noise coefficient $g$ is same in both Hypothesis \ref{hyp1} and \ref{hyp2}, and hence the theorem is over. 
	\end{proof}

	\begin{appendix}
		\renewcommand{\thesection}{\Alph{section}}
		\numberwithin{equation}{section}		\section{Some Useful Results}\label{SUR}
		Let us recall some useful results from \cite{IG,IGCR}. 
		
		\vspace{2mm}
		\noindent
		\textbf{Green's function estimates:} The following estimates have been frequently used in this work (cf. \cite{IG,IGDN,LZN,AKMTM3})
		\begin{align}\label{A1}
			|G(t,x,y)| &\leq Ct^{-\frac{1}{2}}e^{-\frac{|x-y|^2}{a_1t}},\\
			\label{A2}
			\bigg|\frac{\partial G}{\partial y}(t,x,y)\bigg| &\leq Ct^{-1}e^{-\frac{|x-y|^2}{a_2t}},\\
			\label{A3}
			\bigg|\frac{\partial G}{\partial t}(t,x,y)\bigg| &\leq Ct^{-\frac{3}{2}}e^{-\frac{|x-y|^2}{a_3t}},\\
			\label{A4}
			\bigg|\frac{\partial^2 G}{\partial y\partial t}(t,x,y)\bigg| &\leq Ct^{-2}e^{-\frac{|x-y|^2}{a_4t}},\\
			\label{A5}
			|G(t,x,z)-G(t,y,z)| &\leq C|x-y|^{\vartheta}t^{-\frac{\vartheta}{2}-\frac{1}{2}}\max\bigg\{e^{-\frac{|x-z|^2}{a_5t}},e^{-\frac{|y-z|^2}{a_5t}}\bigg\},\\
			\label{A6}
			\bigg|\frac{\partial G}{\partial z}(t,x,z)-\frac{\partial G}{\partial z}(t,y,z)\bigg| &\leq C|x-y|^{\vartheta}t^{-1-\frac{\vartheta}{2}}\max\bigg\{e^{-\frac{|x-y|^2}{a_6t}},e^{-\frac{|x-z|^2}{a_6t}}\bigg\},
		\end{align}for all $t\in(0,T], \ x,y,z\in[0,1]$, $C,\;a_i$ for $i=1,\ldots,6$, are some positive constants and $\vartheta\in[0,1]$.  The following estimate (see \cite{IG}) has been used frequently in the sequel
		\begin{align}\label{A7}
			\big\|e^{-\frac{|\cdot|^2}{a(t-s)}}\big\|_{\L^p} \leq C (t-s)^{\frac{1}{2p}},
		\end{align} for any positive constant $a$ and $p\geq 1$.
		
		Now, we define the operators $J_1(\cdot)$ and $J_2(\cdot)$ by 
		\begin{align*}
			J_1u(t,x)&:=\int_0^t\int_0^1G(t-s,x,y)\big((1+\gamma)u^{\delta+1}-\gamma u-u^{2\delta+1}\big)(s,y)\d y\d s\\&=:\big((1+\gamma)J_{11}-\gamma J_{12}-J_{13}\big)u(t,x),	\\
			J_2u(t,x)&:=\int_0^t\int_0^1 \frac{\partial G}{\partial y}(t-s,x,y)u^{\delta+1}(s,y)\d y\d s,
		\end{align*}for all $t\in[0,T], \ x,y\in[0,1]$, where $u\in\L^\infty(0,T;\L^p([0,1]))$, for some $p\geq1$. A similar result can be found in \cite{IG} (cf. \cite{IGDN,IGCR,AKMTM3}).  We are providing for $J_{13}(\cdot)$  (considering the highest order non-linearity) only, other term $J_{11}(\cdot)$ can be handled in similar manner. Note that for $J_{12}(\cdot)$ one can obtain the estimates from \cite{IG,IGCR}.

		\begin{lemma}\label{lemma3.1}
		Under $\eqref{A1}-\eqref{A7}$, we have 
		\begin{enumerate}
			\item $J_{13}(\cdot)$ is a bounded operator from $\L^{\sigma_3}(0,T;\L^q([0,1]))$ into $\C([0,T];\L^p([0,1]))$ for $p\in[1,\infty]$, $q\in [2\delta+1,p]$, $\kappa_3=1+\frac{1}{p}-\frac{2\delta+1}{q}$ and $\sigma_3>\frac{2(2\delta+1)}{\kappa_3+1}$.
		\end{enumerate}Moreover, for the values of $\kappa_3$ given above, the following estimate hold: \begin{enumerate}

			\item For every $0\leq t\leq T$, there is a constant $C$ such that 
			\begin{align*}
				\|J_{13}v(t,\cdot)\|_{\L^p}	&\leq C\int_0^t(t-s)^{-\frac{1}{2}+\frac{\kappa_3}{2}}\|v(s)\|_{\L^q}^{2\delta+1}\d s\\&\leq Ct^{\left\{\left(\frac{(\kappa_3-1)\sigma_3}{2(\sigma_3-2\delta-1)}+1\right)\frac{\sigma_3-2\delta-1}{\sigma_3}\right\}}\bigg(\int_{0}^{t}\|u(s)\|_{\L^q}^{\sigma_3}\d s\bigg)^\frac{2\delta+1}{\sigma_3},
			\end{align*} where $\sigma_3$ are given above.
			\item For $0\leq s\leq t \leq T$,  $\theta_3\in(0,\frac{\kappa_3+1}{2})$, $\sigma_6>\frac{2(2\delta+1)}{\kappa_3+1-2\theta_3}$, there is a constant $C$ such that 
			\begin{align*}
			\|J_{13}v(t,\cdot)-J_{13}v(s,\cdot)\|_{\L^p}& \leq 
				C(t-s)^{\vartheta_3}\bigg(\int_{0}^{t}\|v(s)\|_{\L^q}^{\sigma_6}\d s\bigg)^{\frac{2\delta+1}{\sigma_6}}.
			\end{align*}
			\item For every $0\leq t \leq T$,  $\rho_6\in(0,\kappa_3+1)$, $\sigma_9>\frac{2(2\delta+1)}{\kappa_3+1-\rho_6}$, there is a constant $C$ such that \begin{align*}
				\|J_{13}v(t,\cdot)-J_{13}v(t,\cdot+z)\|_{\L^p} &\leq C|z|^{\rho_6}
				\bigg(\int_{0}^{t}\|v(s)\|_{\L^q}^{\sigma_9}\d s\bigg)^{\frac{2\delta+1}{\sigma_9}},
			\end{align*}for all $z\in\R$.  We also set $J_{1i}u(t,y):=0, \text{ for } i=\{1,2,3\}$, whenever $y\in\R\backslash[0,1]$.
		\end{enumerate}
	\end{lemma}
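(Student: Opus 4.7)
The plan is to obtain all three estimates by a uniform scheme: move the $\L^p$-norm inside the time integral using Minkowski, exploit that the spatial integrals against $G$, $\partial_y G$ or $G(t,\cdot,y)-G(s,\cdot,y)$ are essentially convolutions with a rescaled heat kernel, and conclude with Young's convolution inequality together with H\"older's inequality in time. Throughout, I will use $\|v^{2\delta+1}(s)\|_{\L^{q/(2\delta+1)}}=\|v(s)\|_{\L^q}^{2\delta+1}$ whenever $q\geq 2\delta+1$.

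\textbf{Step 1 (pointwise-in-time $\L^p$ bound).} Apply Minkowski's integral inequality to $J_{13}v(t,\cdot)$ so that the $\L^p$-norm is moved inside the time integral, and then combine \eqref{A1} with Young's convolution inequality. Choosing exponents $a,b\geq 1$ with $1+\frac{1}{p}=\frac{1}{a}+\frac{1}{b}$ and $b=\frac{q}{2\delta+1}$, the factor $\|e^{-|\cdot|^2/(a_1(t-s))}\|_{\L^a}$ produces $(t-s)^{-\tfrac12+\tfrac{1}{2a}}$ via \eqref{A7}. A short arithmetic gives the exponent $-\tfrac12+\tfrac{\kappa_3}{2}$ with $\kappa_3=1+\tfrac1p-\tfrac{2\delta+1}{q}$, yielding
\[
\|J_{13}v(t,\cdot)\|_{\L^p}\leq C\int_0^t (t-s)^{-\tfrac12+\tfrac{\kappa_3}{2}}\|v(s)\|_{\L^q}^{2\delta+1}\,\d s.
\]
Applying H\"older in time with exponents $\tfrac{\sigma_3}{2\delta+1}$ and $\tfrac{\sigma_3}{\sigma_3-2\delta-1}$ converts this into the second displayed inequality; the constraint $\sigma_3>\tfrac{2(2\delta+1)}{\kappa_3+1}$ is exactly what makes the resulting power of $(t-s)$ integrable near $s=t$.

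\textbf{Step 2 (time H\"older continuity).} For $0\leq s\leq t\leq T$, I would split
\[
J_{13}v(t,\cdot)-J_{13}v(s,\cdot)=\underbrace{\int_s^t\!\!\int_0^1 G(t-r,\cdot,y)\,v^{2\delta+1}(r,y)\,\d y\,\d r}_{=:\mathcal R_1}+\underbrace{\int_0^s\!\!\int_0^1\big(G(t-r,\cdot,y)-G(s-r,\cdot,y)\big)v^{2\delta+1}(r,y)\,\d y\,\d r}_{=:\mathcal R_2}.
\]
The term $\mathcal R_1$ is handled by the same Young/Minkowski argument of Step~1 restricted to $[s,t]$, giving a factor $(t-s)^{(\kappa_3+1)/2}$. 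For $\mathcal R_2$, the standard trick is to write $G(t-r,x,y)-G(s-r,x,y)=\int_{s-r}^{t-r}\partial_\tau G(\tau,x,y)\,\d\tau$, use \eqref{A3}, and interpolate with \eqref{A1}: for any $\theta_3\in(0,\tfrac{\kappa_3+1}{2})$ one gets $|G(t-r,x,y)-G(s-r,x,y)|\leq C(t-s)^{\theta_3}\max\{(t-r)^{-\tfrac12-\theta_3}e^{-|x-y|^2/(c(t-r))},\ldots\}$. Applying Young's and \eqref{A7} followed by H\"older in time with exponents determined by $\sigma_6$ then produces $(t-s)^{\theta_3}$ times an integral that converges provided $\sigma_6>\tfrac{2(2\delta+1)}{\kappa_3+1-2\theta_3}$. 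Taking the smaller of $\theta_3$ and $(\kappa_3+1)/2$ gives the stated estimate with $\vartheta_3:=\theta_3$.

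\textbf{Step 3 (space H\"older continuity).} For the shift in $x$, estimate $G(t-s,x,y)-G(t-s,x+z,y)$ directly via \eqref{A5} with parameter $\vartheta=\rho_6\in(0,1]$ restricted to $\rho_6<\kappa_3+1$, and repeat the Young/Minkowski/H\"older chain. The bound on $\|e^{-|\cdot|^2/(a_5(t-s))}\|_{\L^a}$ then produces the extra factor $|z|^{\rho_6}(t-s)^{-\rho_6/2}$ compared with Step~1, reducing the surviving time exponent from $(\kappa_3-1)/2$ to $(\kappa_3-1-\rho_6)/2$. Integrability near $s=t$ after the H\"older step is then equivalent to $\sigma_9>\tfrac{2(2\delta+1)}{\kappa_3+1-\rho_6}$.

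\textbf{Main obstacle.} The routine part is the Young/H\"older chain itself; the delicate part is the bookkeeping of exponents, in particular verifying that the constraints $q\in[2\delta+1,p]$, $\kappa_3=1+\tfrac1p-\tfrac{2\delta+1}{q}\in(0,1]$, and the lower bounds on $\sigma_3,\sigma_6,\sigma_9$ are exactly those needed so that (i) Young's convolution inequality applies with admissible conjugate exponents, (ii) the power $(t-s)^{-1/2+1/(2a)}$ supplied by \eqref{A7} has $a\geq 1$, and (iii) the time integrals arising after H\"older converge near $s=t$. The $J_{11}$ and $J_{12}$ estimates follow the same template with $2\delta+1$ replaced by $\delta+1$ and $1$ respectively, and $J_2$ is treated identically using $\partial_y G$ and \eqref{A2} at the cost of one extra power of $(t-s)^{-1/2}$, explaining the shift of $\tfrac12$ in the corresponding exponent.
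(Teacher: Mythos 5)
Your proposal is correct and follows essentially the same route as the paper's argument (which carries out the computation for $J_{11}$ and notes that $J_{13}$ is identical with $\delta+1$ replaced by $2\delta+1$): Minkowski plus Young's convolution inequality with \eqref{A1} and \eqref{A7} for the pointwise bound, the split into $\int_s^t$ and $\int_0^s$ of the kernel increment written via $\partial_t G$ and \eqref{A3} for time regularity, and \eqref{A5} for space regularity, each followed by H\"older in time with exactly the exponent arithmetic you record. The only point you gloss over is that in Step 3 the kernel bound involves $\max\{e^{-|x+z-y|^2/a_5t},e^{-|x-y|^2/a_5t}\}$, which is not a convolution kernel in $x-y$; the paper handles this by splitting into the regions $|x-y|\leq|z|$ and $|x-y|>|z|$ and, on the far region, dominating the max by a single shifted Gaussian via the mean value theorem before applying Young's inequality — a technical repair that does not change your exponent bookkeeping.
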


		\begin{lemma}\label{lemma3.2}
			Under  $\eqref{A2}-\eqref{A7}$, for $p\in[1,\infty],\ q\in[\delta+1,p], \ \kappa_1=1+\frac{1}{p}-\frac{\delta+1}{q}$  and $\sigma_{10}>\frac{2(\delta+1)}{\kappa_1}$, the operator $J_2(\cdot)$ is bounded from $\L^{\sigma_{10}}(0,T;\L^q([0,1]))$ into $\C([0,T];\L^p([0,1]))$. Moreover the following estimates hold:
			\begin{enumerate}
				\item For every $0\leq t\leq T$, there is a constant $C$ such that 
				\begin{align*}
					\|J_2v(t,\cdot)\|_{\L^{p}}&\leq C\int_0^t(t-s)^{-1+\frac{\kappa_1}{2}}\|v(s)\|_{\L^q}^{\delta+1}\d s\\&\leq 
					C t^{\left\{\left(\frac{(\kappa_1-2)\sigma_{10}}{2(\sigma_{10}-\delta-1)}+1\right)\frac{\sigma_{10}-\delta-1}{\sigma_{10}}\right\}}\bigg(\int_0^t\|v(s)\|_{\L^q}^{\sigma_{10}}\d s\bigg)^{\frac{\delta+1}{\sigma_{10}}}
				\end{align*}
				\item For $0\leq s\leq t\leq T$, $\theta_4\in (0,\frac{\kappa_1}{2})$, $\sigma_{11}>\frac{2(\delta+1)}{\kappa_1-2\theta_5}$, there is a constant $C$ such that
				\begin{align*}
					\|J_2v(t,\cdot)-J_2v(s,\cdot)\|_{\L^{p}} & \leq 
					C(t-s)^{\theta_4}\bigg(\int_{0}^{t}\|v(s)\|_{\L^q}^{\sigma_{11}}\d s\bigg)^{\frac{\delta+1}{\sigma_{11}}},.
				\end{align*}
				\item For every $0\leq t\leq T,\; \rho_7\in(0,\kappa_1)$ and $\sigma_{12}>\frac{2(\delta+1)}{\kappa_1-\rho_7}$, there is a constant $C$ such that 
				\begin{align*}
					\|J_2v(t,\cdot)-J_2v(t,\cdot+z)\|_{\L^{p}}&\leq C|z|^{\rho_7}
					\bigg(\int_{0}^{t}\|v(s)\|_{\L^q}^{\sigma_{12}}\d s\bigg)^{\frac{\delta+1}{\sigma_{12}}},
				\end{align*}for all $z\in\R$. We also set $J_2v(t,y):=0$, whenever $y\in\R\backslash[0,1]$.
			\end{enumerate}
		\end{lemma}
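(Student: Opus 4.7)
The plan is to mirror the proof of Lemma \ref{lemma3.1} (in particular the treatment of $J_{11}$), replacing the Green's function estimate \eqref{A1} by the estimate \eqref{A2} on $\partial_y G$, which is what produces the exponent shift from $-\tfrac12+\tfrac{\kappa_1}{2}$ to $-1+\tfrac{\kappa_1}{2}$. The three building blocks I would use throughout are: the Gaussian kernel bounds \eqref{A2}, \eqref{A4}, \eqref{A6}; the $\L^r$-bound \eqref{A7} of $e^{-|\cdot|^2/(a(t-s))}$; and Young's inequality for convolutions together with H\"older's inequality in the time variable.

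For part (1), I would write $v^{\delta+1}\in\L^{q/(\delta+1)}([0,1])$ when $v\in\L^q([0,1])$, and apply Young's convolution inequality with the exponent relation $1+\tfrac{1}{p}=\tfrac{1}{r}+\tfrac{\delta+1}{q}$, which gives $\tfrac{1}{r}=\kappa_1$. Combining this with \eqref{A2} and \eqref{A7},
\begin{align*}
\|J_2 v(t,\cdot)\|_{\L^p}
&\leq \int_0^t (t-s)^{-1}\,\bigl\|e^{-|\cdot|^2/(a_2(t-s))}*|v^{\delta+1}(s,\cdot)|\bigr\|_{\L^p}\,\d s \\
&\leq C\int_0^t (t-s)^{-1+\frac{\kappa_1}{2}}\,\|v(s)\|_{\L^q}^{\delta+1}\,\d s.
\end{align*}
The second inequality in part (1) then follows from H\"older with conjugate exponents $\tfrac{\sigma_{10}}{\delta+1}$ and $\tfrac{\sigma_{10}}{\sigma_{10}-\delta-1}$, after noting that the time integrability $\int_0^t(t-s)^{(-1+\kappa_1/2)\sigma_{10}/(\sigma_{10}-\delta-1)}\d s<\infty$ requires exactly the hypothesis $\sigma_{10}>\tfrac{2(\delta+1)}{\kappa_1}$. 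This same computation shows that $J_2$ sends $\L^{\sigma_{10}}(0,T;\L^q)$ into $\C([0,T];\L^p)$.

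For part (2), I would follow the $J_{11}$ argument in the proof of Lemma \ref{lemma3.1} and split
\begin{align*}
\|J_2 v(t,\cdot)-J_2 v(s,\cdot)\|_{\L^p}\leq M+N,
\end{align*}
where $M$ is the integral on $[s,t]$ (handled directly by part (1) applied on an interval of length $t-s$), and
\begin{align*}
N=\Bigl\|\int_0^s\Bigl(\tfrac{\partial G}{\partial y}(t-r,\cdot,\cdot)-\tfrac{\partial G}{\partial y}(s-r,\cdot,\cdot)\Bigr)*v^{\delta+1}(r,\cdot)\,\d r\Bigr\|_{\L^p}.
\end{align*}
On $N$ I would apply the mean value theorem and the mixed derivative bound \eqref{A4} to write $\tfrac{\partial G}{\partial y}(t-r,\cdot,\cdot)-\tfrac{\partial G}{\partial y}(s-r,\cdot,\cdot) = \int_s^t \tfrac{\partial^2 G}{\partial y\,\partial t}(\ell-r,\cdot,\cdot)\d \ell$, bounded by $C(\ell-r)^{-2}e^{-|\cdot|^2/(a_4(\ell-r))}$. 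Young's and \eqref{A7} then yield an inner bound $(\ell-r)^{-2+\kappa_1/2}$; factoring out $(t-s)^{\theta_4}$ using $\theta_4+\rho=1$ with $\rho\in(0,1)$, and applying H\"older in $\d r$ with exponent $\sigma_{11}/(\sigma_{11}-\delta-1)$ to absorb the $\|v(r)\|_{\L^q}^{\delta+1}$ factor against a finite Beta-type integral, gives the claimed $(t-s)^{\theta_4}$ bound provided $\sigma_{11}>\tfrac{2(\delta+1)}{\kappa_1-2\theta_4}$.

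For part (3), I would again imitate the $J_{11}$ spatial-regularity argument, splitting the outer $x$-integral by whether $|x-y|\leq |z|$ or $|x-y|>|z|$. In both regions I use \eqref{A6} to gain a factor $|z|^{\rho_7}$ at the cost of $(t-s)^{-\rho_7/2}$ on the kernel, so the scaling becomes $(t-s)^{-1-\rho_7/2+\kappa_1/2}$ after Young's inequality; H\"older in time then produces the claimed bound whenever $\sigma_{12}>\tfrac{2(\delta+1)}{\kappa_1-\rho_7}$. The main technical nuisance (not a genuine obstacle) will be the near-diagonal region $\{|x-y|\leq|z|\}$, where one cannot estimate each Gaussian separately; as in Lemma \ref{lemma3.1}, one controls the difference by the maximum of the two Gaussians, restricts to $|x-y|\leq|z|$, and still obtains the $(t-s)^{-1-\rho_7/2}$ scaling after integrating in $y$. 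All other steps are direct transcriptions of the $J_{11}$ argument with exponent $-\tfrac12+\tfrac{\kappa_1}{2}$ replaced by $-1+\tfrac{\kappa_1}{2}$.
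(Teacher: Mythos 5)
Your proposal is correct and follows essentially the same route the paper intends: it omits the proof of this lemma precisely because it is the $J_{11}$ argument of Lemma \ref{lemma3.1} with the kernel bounds \eqref{A1}, \eqref{A3}, \eqref{A5} replaced by \eqref{A2}, \eqref{A4}, \eqref{A6}, which is exactly the substitution you carry out, and your exponent bookkeeping (in particular the thresholds $\sigma_{10}>\tfrac{2(\delta+1)}{\kappa_1}$, $\sigma_{11}>\tfrac{2(\delta+1)}{\kappa_1-2\theta_4}$, $\sigma_{12}>\tfrac{2(\delta+1)}{\kappa_1-\rho_7}$) checks out.
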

		Let us recall a result from \cite{IGCR}, which helps us to obtain the uniform tightness for the operators $J_i$: 
		\begin{lemma}[Corollary 3.2, \cite{IGCR}]\label{lemma3.6}
			Let $\zeta_n(t,y)$ be a sequence of random fields on $[0,T]\times[0,1]$ such that $
			\sup\limits_{0\leq t\leq T}\|\zeta_n(t,\cdot)\|_{\L^q}\leq \xi_n$, for $q\in[1,p],$ where $\xi_n$ is a finite random variable for every $n$. Assume that the sequence $\{\xi_n\}$ is bounded in probability, that is, 
			\begin{align*}
				\lim_{C\to\infty}\sup_{n}\P(\xi_n\geq C)=0.
			\end{align*} Under $\eqref{A1}-\eqref{A4}$, the sequences $J_i(\zeta_n)$, for $i=\{1,2\}$, are uniformly tight in \\$\C([0,T];\L^p([0,1])),$ for $p\geq 1$. Moreover in the case $p=\infty$, the sequence $J_i(\zeta_n)$ is tight in the space $\C([0,T]\times[0,1])$.
		\end{lemma}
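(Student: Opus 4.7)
The plan is to establish tightness via an Arzel\`a--Ascoli-type compactness criterion for $\C([0,T];\L^p([0,1]))$, combined with the Riesz--Fr\'echet--Kolmogorov characterization of compactness in $\L^p([0,1])$. The three ingredients from Lemmas \ref{lemma3.1} and \ref{lemma3.2} --- uniform bounds in $\L^p$, H\"older continuity in time with values in $\L^p$, and uniform $\L^p$-equicontinuity of spatial translations --- provide exactly what is needed, once the random variable $\xi_n$ is truncated at a deterministic level.

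First I would fix $\eta>0$ and use the boundedness in probability of $\{\xi_n\}$ to choose $R=R(\eta)>0$ such that $\P(\xi_n\geq R)<\eta$ uniformly in $n$. On the event $\Omega_n^R:=\{\xi_n<R\}$, setting $\sigma=\sigma_1\vee\sigma_4\vee\sigma_7$ (or the analogous exponents for $J_2$) and using H\"older's inequality in time, the estimates of Lemmas \ref{lemma3.1}(1)--(3) and \ref{lemma3.2}(1)--(3) yield deterministic bounds of the form
\begin{align*}
\sup_{0\leq t\leq T}\|J_i(\zeta_n)(t)\|_{\L^p} &\leq C_1(R,T),\\
\|J_i(\zeta_n)(t)-J_i(\zeta_n)(s)\|_{\L^p} &\leq C_2(R,T)|t-s|^{\theta},\\
\|J_i(\zeta_n)(t,\cdot+z)-J_i(\zeta_n)(t,\cdot)\|_{\L^p} &\leq C_3(R,T)|z|^{\rho},
\end{align*}
valid on $\Omega_n^R$ uniformly in $n$, for suitable $\theta,\rho>0$ coming from the admissible ranges in the lemmas.

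Next I would construct an explicit compact set $\mathcal{K}_\eta\subset\C([0,T];\L^p([0,1]))$ by
\[
\mathcal{K}_\eta:=\Bigl\{f\in\C([0,T];\L^p([0,1]))\,:\,\sup_{t}\|f(t)\|_{\L^p}\leq C_1,\ \omega_f^t(h)\leq C_2 h^{\theta},\ \omega_f^x(r)\leq C_3 r^{\rho}\Bigr\},
\]
where $\omega_f^t$ and $\omega_f^x$ denote the temporal and spatial $\L^p$-moduli of continuity. The spatial modulus condition, by the Riesz--Fr\'echet--Kolmogorov theorem, guarantees that the orbit $\{f(t):t\in[0,T]\}$ is relatively compact in $\L^p([0,1])$ for each $f\in\mathcal{K}_\eta$, while the temporal modulus condition furnishes equicontinuity. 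Combined with uniform boundedness, Arzel\`a--Ascoli for $\C([0,T];\L^p([0,1]))$ (valued in the Banach space $\L^p$) then implies that $\mathcal{K}_\eta$ is relatively compact. By construction, $\P(J_i(\zeta_n)\in\overline{\mathcal{K}_\eta})\geq 1-\eta$ for every $n$, which is the required tightness.

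The case $p=\infty$ reduces to showing joint H\"older continuity on $[0,T]\times[0,1]$. Taking $p=\infty$ in Lemmas \ref{lemma3.1}(2)--(3) and \ref{lemma3.2}(2)--(3) (permissible since the admissibility conditions on $\sigma_{i}$ still hold with the stated $q$) yields, on $\Omega_n^R$, uniform bounds of the form $|J_i(\zeta_n)(t,x)-J_i(\zeta_n)(s,y)|\leq C(R)(|t-s|^{\theta}+|x-y|^{\rho})$, and a direct application of the classical Arzel\`a--Ascoli theorem on the compact metric space $[0,T]\times[0,1]$ finishes the proof. The main technical point --- and the step I would be most careful with --- is verifying that the spatial modulus bound of Lemma \ref{lemma3.1}(3) (which concerns translates on $\R$ after extending $J_i\zeta_n$ by zero outside $[0,1]$) does translate into $\L^p$-precompactness on the bounded interval $[0,1]$; this requires invoking the boundary convention $J_i u(t,y)=0$ for $y\notin[0,1]$ built into Lemma \ref{lemma3.1} so that the Riesz--Fr\'echet--Kolmogorov criterion applies cleanly on $\R$ and then restricts to $[0,1]$.
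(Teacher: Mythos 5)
Your proposal is correct, but note that the paper itself offers no proof of this lemma: it is quoted verbatim as Corollary 3.2 of \cite{IGCR}, so there is nothing internal to compare against. Your argument — truncate $\xi_n$ at a deterministic level $R(\eta)$, use the three estimates of Lemmas \ref{lemma3.1} and \ref{lemma3.2} to get a uniform $\L^p$-bound together with temporal and spatial-translation H\"older moduli on the event $\{\xi_n<R\}$, and conclude via Kolmogorov--Riesz plus the Banach-space-valued Arzel\`a--Ascoli theorem (resp.\ the classical one on $[0,T]\times[0,1]$ when $p=\infty$) — is exactly the standard route taken in the cited reference, and your closing remark about the zero-extension convention making the translation criterion applicable on $[0,1]$ addresses the one genuinely delicate point. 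The only blemish is notational: the surviving statement of Lemma \ref{lemma3.1} labels the relevant exponents $\sigma_3,\sigma_6,\sigma_9$ (for $J_{13}$), not $\sigma_1,\sigma_4,\sigma_7$, so the maximum should be taken over those together with $\sigma_{10},\sigma_{11},\sigma_{12}$ from Lemma \ref{lemma3.2}.
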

		
	\end{appendix}
	
	\medskip\noindent
	\textbf{Acknowledgments:} The first author would like to thank Ministry of Education, Government of India - MHRD for financial assistance. The second author would like to thank Prof. B. Rajeev (ISI Bangalore) for the  financial assistance through his CRG grant (SERB sanction File No.: CRG/2019/002595), and the  Department of Atomic Energy (DAE), Government of India for assisting through NBHM post-doctoral fellowship (File No.: 0204/6/2022/R\&D-II/5635).  M. T. Mohan would  like to thank the Department of Science and Technology (DST), India for Innovation in Science Pursuit for Inspired Research (INSPIRE) Faculty Award (IFA17-MA110).

%
%
%
%
%
%
%
%
%

\end{document}